\newcommand{\adots}{
  \mathinner{\mkern1mu\raise1pt\hbox{.}\mkern2mu\raise4pt\hbox{.}
  \mkern2mu\raise7pt\vbox{\kern7pt\hbox{.}}\mkern1mu}}
\newcommand{\Sign}{\mathop{\mathrm{Sign}}}
\renewcommand{\Re}{\mathop{\mathrm{Re}}}
\renewcommand{\Im}{\mathop{\mathrm{Im}}}
\newcommand{\tr}{\mathop{\mathrm{tr}}}
\def\Neg{\mathop{\mathrm{neg}}}
\def\Pos{\mathop{\mathrm{pos}}}
\def\Ker{\mathop{\mathrm{ker}}}
\def\Im{\mathop{\mathrm{im}}}
\newcommand{\Imag}{\mathop{\mathrm{Im}}}
\DeclareMathAlphabet{\matheul}{U}{eus}{m}{n}
\theoremstyle{plain}
\newtheorem{thm}{Theorem}[section]
\newtheorem{lem}[thm]{Lemma}
\newtheorem{cor}[thm]{Corollary}
\newtheorem{pro}[thm]{Proposition}
\theoremstyle{definition}
\newtheorem{defn}[thm]{Definition}
\journal{Linear Algebra and its Applications}
\begin{document}

\begin{frontmatter}



\title{Factorizations into Normal Matrices \\ in Indefinite Inner Product Spaces}


\author{Xuefang Sui\fnref{e1}}
\author{Paolo Gondolo\fnref{e2}}

\fntext[e1]{{\it Email address:} xuefang.sui@utah.edu}
\fntext[e2]{{\it Email address:} paolo.gondolo@utah.edu}

\address{Department of Physics and Astronomy, University of Utah, 115 South 1400 East \#201, Salt Lake City, UT 84112-0830}

\begin{abstract}
We show that any nonsingular (real or complex) square matrix can be factorized into a product of at most three normal matrices, one of which is unitary,  another selfadjoint with eigenvalues in the open right half-plane, and the third one is normal involutory with a neutral negative eigenspace (we call the latter matrices normal neutral involutory).  Here the words normal, unitary, selfadjoint  and neutral  are understood with respect to an indefinite inner product. 
\end{abstract}  

\begin{keyword}
Indefinite inner product  \sep Polar decomposition  \sep Sign function  \sep Normal matrix   \sep Neutral involution 

\MSC[2010]  15A23 \sep 15A63 \sep 47B50 \sep  	15A21 \sep   15B99

\end{keyword}

\end{frontmatter}



\section{Introduction} 

We consider two kinds of indefinite inner products: a complex Hermitian inner product  and a real symmetric inner product.   Let $\mathbb{K}$ denote either the field of complex numbers $\mathbb{K} = \mathbb{C}$ or the field of real numbers $\mathbb{K} = \mathbb{R}$.    Let the nonsingular matrix $H \in \mathbb{K}^{n \times n}$ be either a complex Hermitian matrix $H\in \mathbb{C}^{n\times n}$ defining a complex Hermitian inner product $[x,y]_H=\bar{x}^THy$ for $x,y \in \mathbb{C}^n$, or a real symmetric matrix $H\in\mathbb{R}^{n\times n}$ defining a real symmetric inner product $[x,y]_H=x^THy$ for $x,y\in \mathbb{R}^{n}$. Here $\bar{x}^T$ indicates the complex conjugate transpose of $x$ and $x^T$ indicates the transpose of $x$. 

In a Euclidean  space,  $H \in \mathbb{K}^{n\times n}$ is the identity matrix.   In general, if the integers $p$ and $q$ denote  the numbers of positive and negative eigenvalues of $H \in \mathbb{K}^{n \times n}$, respectively,  one defines the inertia and the signature of the nondegenerate indefinite inner product $[x,y]_H$ as the pair $(p, q)$ and the integer $p-q$, respectively.

Extending the  definitions in a Euclidean space to an indefinite inner product space,  the $H$-adjoint $A^{[H]}$ of a matrix $A\in \mathbb{K}^{n \times n}$ is defined as $[Ax,y]_H=[x,A^{[H]}y]_H$ for all $x,y \in \mathbb{K}^n$,  that is 
\begin{align}
A^{[H]}=H^{-1}A^\dagger H
\end{align}
with
\begin{align}
A^\dagger=\begin{cases}
\bar{A}^T,   \quad \text{for the complex Hermitian inner product}, \\
A^T,  \quad \text{for the real symmetric inner product}.
\end{cases}
\end{align}
A matrix $A\in \mathbb{K}^{n\times n}$ is called selfadjoint (Hermitian for $\mathbb{K=C}$,  and symmetric for $\mathbb{K=R}$) if $A^\dagger=A$. 
A matrix $L \in  \mathbb{K}^{n\times n}$ is called $H$-unitary (for $\mathbb{K}=\mathbb{R}$, also called $H$-orthogonal) if $L^{[H]}L =LL^{[H]}=I_n$ or $L^\dagger H L=H$.   A matrix $S \in  \mathbb{K}^{n\times n}$ is  called  $H$-selfadjoint ($H$-Hermitian for $\mathbb{K}=\mathbb{C}$, and  $H$-symmetric for $\mathbb{K}=\mathbb{R}$) if $S^{[H]}=S$ or $S^\dagger H=HS$. Finally a matrix $A \in  \mathbb{K}^{n\times n}$ is called $H$-normal if it commutes with its $H$-adjoint, i.e., $A A^{[H]} = A^{[H]} A$.

There are several kinds of matrix factorizations in a Euclidean space.  Inspired by the polar form of nonzero complex numbers, a square matrix admits a polar decomposition into a product of a unitary matrix and a positive-semidefinite selfadjoint matrix.  The individual matrix factors in a polar decomposition are normal matrices.   Here,  as usual,  a matrix $A$ is normal if $A^\dagger A=A A^\dagger$.

In this paper,  we present a way to decompose any nonsingular matrix into matrix factors that are normal with respect to a predefined inner product (i.e., if the matrix $H$ defines the inner product, a matrix $A$ is $H$-normal if $A^{[H]} A=A A^{[H]}$). Our $H$-normal factorization is close to the polar decomposition in a Euclidean space.  We start by mentioning the classical polar decomposition and studies of polar decompositions in  indefinite inner product spaces, and then we present our results. 

To simplify the language,  we have extended the definition of  positive-definite matrices to matrices with nonreal eigenvalues as follows. 
\begin{defn}($r$-positive-definite)
\label{defn:r-positive-definite}
A matrix is $r$-positive-definite if all of its eigenvalues have positive real part,  or equivalently if all of its eigenvalues lie in the open right half-plane. 
\end{defn} 

We use  $A^{1/2}$  to denote the principal square root of a square matrix $A \in \mathbb{C}^{n\times n}$ defined as follows.
\begin{defn}
(Principal square root; Thm.~1.29 in~\cite{Higham2008}) For a nonsingular matrix $A \in \mathbb{C}^{n\times n}$ with no negative real eigenvalues, the principal square root $A^{1/2}$ of $A$ is the unique $r$-positive-definite solution $S$ of $S^2=A$. 
\end{defn}

We  have defined positive eigenspace, negative eigenspace and  nonreal eigenspace of a matrix $A \in \mathbb{C}^{n\times n}$ through the Jordan decomposition as follows.
\begin{defn}The positive (negative) eigenspace of  a matrix $A \in \mathbb{C}^{n\times n}$  is defined as the subspace spanned by a set of  generalized eigenvectors belonging to all the  positive (negative) real eigenvalues of $A$.   The nonreal eigenspace of  $A$  is defined as the subspace spanned by a set of  generalized eigenvectors belonging to all the   nonreal  eigenvalues of $A$.
\end{defn}

In the following, the concept of hyperbolic subspace will play an important role. We use the definition of hyperbolic subspace that appears in the context of quadratic forms and Witt's decomposition theorem as follows.
\begin{defn} (Hyperbolic subspace) A hyperbolic subspace is defined as a nondegenerate subspace with signature zero.  
\end{defn}
A hyperbolic subspace has necessarily an even number of dimensions.   A hyperbolic subspace of dimension $2m$ has a basis $(u_1,\dots, u_m, v_1,\dots, v_m)$ such that $[u_i,u_j]_H=[v_i,v_j]_H=0$ and $[u_i,v_j]_H=\delta_{ij}$, where $\delta_{ij}$ is Kronecker delta and $i,j=1,\hdots, m$. The two subspaces spanned by $(u_1,\dots, u_m)$ and  $(v_1,\dots, v_m)$, respectively, are neutral subspaces.  See, e.g.,~\cite{Szymiczek1997,Vaz2016}.   Let $w_i=\frac{1}{\sqrt{2}}(u_i+v_i)$ and $z_i=\frac{1}{\sqrt{2}}(u_i-v_i)$, then $(w_1, \dots,w_m,  z_1,\dots, z_m)$ is another basis that $[w_i,w_j]_H=\delta_{ij}$, $[z_i,z_j]_H=-\delta_{ij}$ and $[w_i, z_j]_H=0$. It is clear that the inertia of this hyperbolic subspace is $(m,m)$, i.e., the signature is zero.  Moreover, it is an orthogonal sum of $m$ hyperbolic planes, which are hyperbolic subspaces of dimension 2.

For nonsingular matrices, the classical concept of polar decomposition in a Euclidean space is expressed by the following statement.  
Any nonsingular square matrix $F \in \mathbb{K}^{n\times n} $ has unique right and left polar decompositions 
\begin{align}
\label{eq:F=OS}
F = U S =S^\prime U,
\end{align}
where the matrix $ U \in  \mathbb{K}^{n\times n} $ is unitary  and  the matrices $ S, S^\prime \in \mathbb{K}^{n\times n} $ are selfadjoint positive-definite (i.e., all of their eigenvalues are real and positive). The matrices $S$, $S'$, and $U$ are given by $S = ( F^\dagger F )^{1/2}$, $S^\prime = (F F^\dagger)^{1/2}$, and $U = F S^{-1} = S^{\prime-1} F$.  

It is a longstanding and interesting question to generalize the classical polar decomposition in a Euclidean space to an indefinite inner product space.  Generalized polar decomposition,  $H$-polar decomposition and semidefinite $H$-polar decomposition are defined and studied.  All of these definitions allow for singular matrices.  Also a generalized polar decomposition is defined in an indefinite scalar product space, where the product matrix is not necessarily selfadjoint.   Considering in this paper we study decompositions for  nonsingular matrices in indefinite inner product spaces,   here we quote their results only in the same situation. 

Let $H\in \mathbb{K}^{n\times n}$ be a selfadjoint matrix and $F\in \mathbb{K}^{n\times n}$ be a nonsingular square matrix.   Write $F\in\mathbb{K}^{n\times n}$  into a product of factors $L\in\mathbb{K}^{n\times n}$ and $S\in\mathbb{K}^{n\times n}$ as 
\begin{align}
\label{eq:Hpolar}
F = L S. 
\end{align}

In the form of  Equation~(\ref{eq:Hpolar}), $F\in\mathbb{K}^{n\times n}$ has a generalized polar decomposition  if  $L\in\mathbb{K}^{n\times n}$ is $H$-unitary and $S\in\mathbb{K}^{n\times n}$ is $r$-positive-definite $H$-selfadjoint (see~\cite{Cardoso2002,Higham2004,Higham2005,Higham2010,Mackey2006}). 
Necessary and sufficient conditions  for the existence of generalized polar decomposition  are given  in\cite{Higham2005,Higham2010} as follows.  A nonsingular matrix $F$ has a generalized polar decomposition if and only if   $F^{[H]} F$ has no negative real eigenvalues. When such a factorization exists, it is unique. 

In the form of  Equation~(\ref{eq:Hpolar}), $F\in\mathbb{K}^{n\times n}$ has an $H$-polar decomposition if $L\in\mathbb{K}^{n\times n}$ is $H$-unitary and $S\in\mathbb{K}^{n\times n}$ is  $H$-selfadjoint. In this case $S$ is not necessarily $r$-positive-definite. 
Necessary and sufficient conditions for the existence of an $H$-polar decomposition are given  in\cite{Bolshakov1995,Bolshakov1996,Bolshakov19961997,Bolshakov1997,Bolshakov19971997,Kintzel2005,Mehl2005} as follows. A nonsingular matrix $F$ has  an $H$-polar decomposition  if and only if either $F^{[H]}F$ has no negative  real  eigenvalues or the negative-real-eigenvalue  Jordan blocks  in the canonical form of $ ( F^{[H]} F, H ) $ come in pairs of opposite sign characteristic,  that is,  by Theorem 4.4 in~\cite{Bolshakov1997},  the part of the canonical form $(J,K)$ of $(F^{[H]}F, H)$ corresponding to  the negative eigenvalues $\lambda_l$ of $F^{[H]}F$ is 
\begin{align}
\label{eq:polar_condition}
\left(\bigoplus_l  \begin{pmatrix} J_{s_l}(\lambda_l)&\\&J_{s_l}(\lambda_l)\end{pmatrix}, \quad   \bigoplus_l \begin{pmatrix}Z_{s_l}& \\& -Z_{s_l} \end{pmatrix} \right).
\end{align}

In the  form of  Equation~(\ref{eq:Hpolar}),    $F\in\mathbb{K}^{n\times n}$ has a semidefinite $H$-polar decomposition if $L\in\mathbb{K}^{n\times n}$ is $H$-unitary and $S\in\mathbb{K}^{n\times n}$ is  $H$-selfadjoint and $H$-nonnegative, i.e., $HS$ is selfadjoint and positive-semidefinite (in a nonsingular case, $HS$ is positive-definite).  Necessary and sufficient conditions for the existence of a semidefinite $H$-polar decomposition are given in \cite{Bolshakov19971997} as follows.  A nonsingular matrix  $F$ has a semidefinite $H$-polar decomposition if and only if $F^{[H]}F$ has only positive real eigenvalues and is diagonalizable.  A semidefinite $H$-polar decomposition  is a particular case of an $H$-polar decomposition.

As seen from the statements above,  if $F$ has a semidefinite polar decomposition,  then $F$ has a generalized polar decomposition.  If  $F$ has a generalized polar decomposition, then $F$ has an $H$-polar decomposition.  

    In our previous work~\cite{Sui2015},   we found a unique indefinite polar decomposition in an indefinite inner product space $\mathbb{K}^n$, which is close to the generalized polar decomposition that the $H$-selfadjoint factor is $r$-positive-definite.    By introducing a proper sign function,  a square root of a negative eigenvalue is avoided.   The decompositions apply to all the nonsingular matrices. 
The result in~\cite{Sui2015} is more general which is studied  for both bilinear and sesquilinear forms in indefinite scalar product spaces.     Here we quote the results only in  indefinite inner product spaces.
Any nonsingular matrix $F \in \mathbb{K}^{n\times n}$ can be uniquely decomposed as 
\begin{align}
\label{eq:F=WS=SW}
F=WS = S'W,
\end{align}
where, with $\Sigma=\Sign (F^{[H]}F)$ and $\Sigma^\prime=\Sign (FF^{[H]})$, $W\in\mathbb{K}^{n\times n}$ is $(H, H\Sigma)$-unitary and $(H\Sigma^\prime, H)$-unitary,  $S\in \mathbb{K}^{n \times n}$ is $r$-positive-definite $H$-selfadjoint and $H\Sigma$-selfadjoint,  and $S^\prime \in \mathbb{K}^{n\times n}$ is  $r$-positive-definite $H$-selfadjoint and $H\Sigma^\prime$-selfadjoint.  Both right and left decompositions are unique.  $S$ is given by $S=(\Sigma F^{[H]}F)^{1/2}$ and $S^\prime$ is given by $S^\prime=(\Sigma^\prime FF^{[H]})^{1/2}$.  Here $\Sign$ is a sign function defined in Section~\ref{sec:sign_function}.

In this paper,  we  first show that any square matrix $W\in\mathbb{K}^{n\times n}$ such that $W^{[H]} W=\Phi$, where $\Phi$ is $H$-selfadjoint involutory with a hyperbolic negative eigenspace,  can be factorized into a product
\begin{align}
W = L X,
\label{eq:introlx}
\end{align}
where $L\in \mathbb{K}^{n\times n}$ is $H$-unitary and $X\in \mathbb{K}^{n\times n}$ is $H$-normal $H$-neutral involutory. We call a matrix $H$-neutral involutory  if  it is involutory with an $H$-neutral negative eigenspace (see Definition~\ref{defn:neutral_involution}).    Properties of $H$-normal $H$-neutral involutory matrices are presented in Section~\ref{sec:neutral}.    Similarly,  for a matrix  $W\in\mathbb{K}^{n\times n}$ such that $WW^{[H]}=\Phi$, where $\Phi$ is $H$-selfadjoint involutory with a hyperbolic negative eigenspace, there exists a left decomposition
\begin{align}
W = X L,
\label{eq:introxl}
\end{align}
where $L\in \mathbb{K}^{n\times n}$ is $H$-unitary and $X\in \mathbb{K}^{n\times n}$ is $H$-normal $H$-neutral involutory.  The decompositions~(\ref{eq:introlx}) and~(\ref{eq:introxl}) are not unique. 

We  show that $W$ in~(\ref{eq:F=WS=SW})  satisfies the conditions for the decompositions~(\ref{eq:introlx}) and~(\ref{eq:introxl}).   Therefore,  any nonsingular square matrix $F\in\mathbb{K}^{n\times n}$ can be factorized into a product of at most three $H$-normal matrices
\begin{align}
F = L X S,
\label{eq:introlxs}
\end{align}
where $L\in \mathbb{K}^{n\times n}$ is $H$-unitary, $X\in \mathbb{K}^{n\times n}$ is $H$-normal $H$-neutral involutory, and $S\in \mathbb{K}^{n\times n}$ is $H$-selfadjoint  and  $r$-positive-definite. Other similar decompositions exist,
\begin{align}
F = S' L_1 X_1= S' X' L' =X'_1 L'_1 S ,
\label{eq:introsxl}
\end{align}
where $L_1,L',L'_1\in \mathbb{K}^{n\times n}$ are $H$-unitary, $X_1,X',X'_1\in \mathbb{K}^{n\times n}$ are $H$-normal $H$-neutral involutory, and $S,S'\in \mathbb{K}^{n\times n}$ are $H$-selfadjoint and $r$-positive-definite. The factors $S$ and $S'$ are uniquely determined by $F$ (given $H$),  while the other matrices satisfy $LX=L_1X_1=X'L'=X'_1L'_1$.

In Section~\ref{sec:sign_function}, we define a sign matrix function of a matrix $A\in\mathbb{K}^{n\times n}$.  In Section~\ref{sec:canonical}, we review the canonical form of a pair $(A, H)$, where $H\in\mathbb{K}^{n\times n}$ is a selfadjoint matrix and  $A\in\mathbb{K}^{n\times n}$ is an $H$-selfadjoint matrix.   In Section~\ref{sec:neutral}, we introduce $H$-normal $H$-neutral involutory matrices and give some of their properties.  In Section~\ref{sec:W=LX}, we present the factorizations $W=LX$ and $W=XL$ in~(\ref{eq:introlx}) and~(\ref{eq:introxl}) respectively.   In Section~\ref{sec:F=LXS},  we present  the decompositions $F = L X S= S' L_1 X_1= S' X' L' =X'_1 L'_1 S,
$ in (\ref{eq:introlxs}) and (\ref{eq:introsxl}) for any nonsingular square matrix $F\in\mathbb{K}^{n\times n}$.

\section{A sign function}
\label{sec:sign_function}

We start by recalling some facts about primary matrix functions (see, e.g.,~\cite{Gantmacher1977, Higham2008,HornJohnson1991}). A primary matrix function $f$ of a matrix $A\in\mathbb{K}^{n\times n}$ can be defined by means of a function $f: \mathbb{C} \to \mathbb{C}$ (denoted by the same letter) defined on the spectrum of $A$ and called the stem function of the matrix function $f$.

\begin{defn}(chapter~V in~\cite{Gantmacher1977} or Definition 1.1 in~\cite{Higham2008}) 
A  function $f: \mathbb{C} \to \mathbb{C}$ is said to be defined on the spectrum of a matrix $A\in\mathbb{K}^{n\times n}$ if its value $f(\lambda_k)$  and the values of its $s_k-1$ derivatives 
\begin{align}
\label{eq:derivative}
f^{(j)}(\lambda_k),\quad \quad \quad j=0, \hdots,  s_k-1, \quad \quad k =1, \hdots, t,
\end{align}
exist at all eigenvalues  $\lambda_k$ of $A$.  Here $s_k$  is the size of the Jordan blocks $J_{s_k}(\lambda_k)$ in the Jordan decomposition of $A$.
\end{defn} 
As remarked in~\cite{Higham2008} right after Definition 1.1, arbitrary numbers can be chosen  and assigned as the values of $f(\lambda_k)$ and its derivatives $f^{(j)}(\lambda_k), j=1,\hdots, s_k-1$, at each eigenvalue $\lambda_k$ of $A$.

A primary matrix function $f(A)$  of a matrix $A\in\mathbb{K}^{n\times n}$ is well defined in the sense that it is unique. 
Since every primary matrix function of $A$ is a polynomial in $A$ (see Thm.~1.12 in~\cite{HornJohnson1991}), all primary matrix functions $f(A)$ commute with the matrix $A$  and also  commute with  each other. 

Moreover, $f(A^T) = f(A)^T$ and $f(Q^{-1} A Q) = Q^{-1} f(A) Q$ for a nonsingular matrix $Q\in\mathbb{K}^{n\times n}$ hold for any primary matrix function $f$ of a matrix $A\in\mathbb{K}^{n\times n}$.  It follows that for a real symmetric inner product defined by a real symmetric matrix $H\in \mathbb{R}^{n\times n}$,   $f(A^{[H]})=f(A)^{[H]}$ always holds, while for a complex Hermitian inner product defined by a complex Hermitian matrix $H\in \mathbb{C}^{n\times n}$,   $f(A^{[H]})=f(A)^{[H]}$ if and only if $f(\bar{A})=\overline{f(A)}$ (see Thm.~3.1 in~\cite{Higham2005}).    If the stem function in Equation~(\ref{eq:derivative}) satisfies $f^{(j)}(\bar{\lambda})=\overline{f^{(j)}(\lambda)}$, then $f(\bar{A})=\overline{f(A)}$, and $f(A)$ is real when $A$ is real  (see e.g.~\cite{HornJohnson1991}).

For an $H$-selfadjoint matrix $A\in\mathbb{K}^{n\times n}$, if $f(A^{[H]})=f(A)^{[H]}$, then both $f(A)$ and $f(A)A$ are $H$-selfadjoint.

The matrix sign function in Roberts~\cite{Roberts1971},  commonly used in the mathematical literature on control theory, eigendecompositions, and roots of matrices (see, e.g.,~\cite{Higham1994,KenneyLaub1995}),  is defined as the primary matrix function associated to the scalar  function
\begin{align}
f(\lambda)=\begin{cases}
+1, \quad\quad & \text{for} \quad \Re \lambda>0, \\
-1, \quad\quad &\text{for}  \quad  \Re \lambda<0 , \\
\text{undefined},  \quad\quad &\text{for}  \quad  \Re \lambda=0.
\end{cases}
\end{align}

Here we introduce a different sign function of a nonsingular  matrix $A$ through a scalar function  $\Sign(\lambda)$ as follows. 
\begin{defn}
The function $\Sign$ is defined as 
\begin{align}\label{eq:Sign}
\Sign(\lambda)=\begin{cases}
\text{undefined}, \quad \quad &\text{for}\quad  \lambda=0,\\
-1, \quad \quad &\text{for}\quad  \Re \lambda <0,  \Imag \lambda=0, \\
+1, \quad\quad &\text {otherwise},
\end{cases}
\end{align}
and  all derivatives of $\Sign(\lambda)$ at  $\lambda$ are equal to zero, i.e.,
\begin{align}
\Sign{^{(j)}}(\lambda)=0,  \quad \quad \quad j\ge 1.
\end{align}
\end{defn}

With the stem  function  (\ref{eq:Sign}), one can define the corresponding matrix sign function through the definition of primary function  (see, e.g.,~\cite{Gantmacher1977, Higham2008, HornJohnson1991}).   The matrix sign function $\Sign(A)$ is a primary matrix function of matrix $A \in \mathbb{K}^{n\times n}$ and thus is unique.

For a general complex matrix $A$, $\Sign(A)$ is complex in general.   For a real matrix $A$, $\Sign(A)$ is real, since $\overline{\Sign(\lambda)}=\Sign{\bar{\lambda}}$.

The matrix $\Sign(A)$ is an involutory  matrix, i.e., 
\begin{align}
\left[\text{Sign} (A) \right]^2=I_n.
\end{align}
The negative eigenspace of $\Sign(A)$ (i.e., the eigenspace with eigenvalue $-1$) is  the negative eigenspace of $A$.  The positive eigenspace of $\Sign(A)$ (i.e., the eigenspace with eigenvalue $+1$) is the sum of the positive eigenspace and nonreal eigenspace of $A$.

\section{Canonical form of a pair $(A, H)$} 
\label{sec:canonical}
In this section, we review some facts about the canonical form of a pair of matrices $(A,H)$, where $H \in \mathbb{K}^{n\times n}$ is a selfadjoint matrix and $A\in \mathbb{K}^{n\times n}$ is an $H$-selfadjoint matrix.   Then for a nonsingular matrix $F\in \mathbb{K}^{n\times n}$,  we present a proposition of an  $H$-selfadjoint matrix  $F^{[H]}F$ through the canonical form of $(F^{[H]}F, H)$.

\begin{defn}(Unitarily  similar pairs  in \cite{Gohberg2005} pp.~55 and~133)
Let  $H_1, H_2\in \mathbb{K}^{n \times n}$ be invertible selfadjoint matrices.  Let $A_1,A_2 \in \mathbb{K}^{n \times n}$  be two $n\times n$ matrices. 
The pairs $(A_1,H_1)$ and $(A_2, H_2)$ are said  to be unitarily similar (for $\mathbb{K=R}$, also called  r-unitarily  similar or orthogonally similar) if there exists an invertible matrix $Q\in \mathbb{K}^{n\times n}$  such that \begin{align}
\label{eq:transformation}
A_1=Q^{-1}A_2Q
\quad \text{and} \quad
H_1=Q^\dagger H_2Q.
\end{align}
\end{defn}
If $(A_1,H_1)$ and $(A_2,H_2)$ are unitarily similar, it follows that if $A_1$ is $H_1$-selfadjoint, then $A_2$ is $H_2$-selfadjoint,  and that  if $A_1$ is $H_1$-unitary, then $A_2$ is $H_2$-unitary.   

If $H_1=H_2$, then the transformation matrix $Q$ is  an $H$-unitary matrix.
\begin{defn}($H$-unitarily similar  matrices  in \cite{Gohberg2005} pp.~83 and~133)
\label{defn:u_similar}
Let $H\in \mathbb{K}^{n \times n}$ be an invertible  selfadjoint matrix. 
Two matrices $A_1,A_2 \in \mathbb{K}^{n \times n}$ are said  to be  $H$-unitarily similar (for $\mathbb{K}=\mathbb{R}$, also called  $H$-orthogonally similar) if there exists an $H$-unitary matrix $L$ such that
\begin{align}
A_1=L^{-1}A_2L.
\end{align}
\end{defn}

Let $Z_s$  be the $s \times s$ matrix
\begin{align}
Z_s=\begin{pmatrix}
&&&&1\\
&&&1&\\
&&\reflectbox{$\ddots$}&&\\
&1&&&\\
1&&&&\\
\end{pmatrix}.
\end{align}

\begin{thm}
\label{thm:canonical_C} (Complex canonical form; Thm.~5.1.1 in~\cite{Gohberg2005})  Let  $H \in \mathbb{C}^{n\times n}$ be a nonsingular  Hermitian matrix and  $A \in \mathbb{C}^{n\times n}$ be an $H$-Hermitian matrix.  Then the pair $(A, H)$ is unitarily similar to a canonical pair $(J, K)$ through an invertible  transformation $Q\in \mathbb{C}^{n\times n}$,  i.e.,
\begin{equation}
\label{eq:canonicalpair}
Q^{-1} A Q = J \quad \text{and} \qquad \bar{Q}^T H Q=K,
\end{equation}
where $J$ is the complex  Jordan form of $A$, namely
\begin{align}
J&=J_{s_1}(\lambda_1) \oplus \hdots \oplus J_{s_p}(\lambda_p) \oplus J_{s_{p+1}}(\lambda_{p+1}; \bar{\lambda}_{p+1}) \oplus \hdots \oplus J_{s_{p+q}}(\lambda_{p+q}; \bar{\lambda}_{p+q}),
\end{align}
with real eigenvalues $\lambda_1,\hdots, \lambda_p$ and nonreal eigenvalues $\lambda_{p+1}, \hdots, \lambda_{p+q}$,  and
\begin{align}
K&= \epsilon_1 Z_{s_1}\oplus  \hdots \oplus  \epsilon_p Z_{s_p} \oplus  Z_{s_{p+1}} \oplus \hdots \oplus Z_{s_{p+q}},
\end{align} 
with  $\epsilon_1=\pm 1, \hdots, \epsilon_p=\pm1$.  
\end{thm}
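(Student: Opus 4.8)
The plan is to follow the classical route for the canonical form of a pair $(A,H)$ with $A$ being $H$-Hermitian, in three stages: reduce $A$ to ordinary Jordan form, split the space into $H$-orthogonal root-subspace pieces, and normalize each piece. First I would reduce to Jordan form: choose an invertible $P\in\mathbb{C}^{n\times n}$ with $P^{-1}AP=J_0$, the ordinary Jordan form of $A$, and set $H_0=P^\dagger H P$. Then $(A,H)$ is unitarily similar to $(J_0,H_0)$, the matrix $J_0$ is $H_0$-Hermitian, and it remains to bring $H_0$ to the form $K$ by a further unitary similarity whose transformation matrix commutes with $J_0$; such matrices act separately within each generalized (root) eigenspace of $A$, so nothing done afterward disturbs the Jordan structure.

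Second, I would establish the root-space decomposition. From $[Ax,y]_H=[x,Ay]_H$ one deduces, for generalized eigenvectors $x,y$ belonging to eigenvalues $\lambda$ and $\mu$ with $\bar\mu\neq\lambda$, that $[x,y]_H=0$; hence the root subspace $R_\lambda$ is $[\cdot,\cdot]_H$-orthogonal to every $R_\mu$ with $\mu\neq\bar\lambda$. Since $H$ is nonsingular, this forces: for a real eigenvalue $\lambda$ the form restricted to $R_\lambda$ is nondegenerate; for a nonreal eigenvalue $\lambda$ the subspaces $R_\lambda$ and $R_{\bar\lambda}$ are each $[\cdot,\cdot]_H$-neutral and pair nondegenerately with one another. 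Thus $\mathbb{C}^n$ is an $H$-orthogonal direct sum of pieces of two model types, and it suffices to normalize each type.

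Third comes the normalization, which is the technical heart. For a nonreal $\lambda$, fix any Jordan basis of $A$ on $R_\lambda$; the nondegenerate pairing with $R_{\bar\lambda}$ determines a dual basis there, and pairing a block $J_s(\lambda)$ with its partner gives, after a permutation of the basis vectors, exactly a block of the form $(J_s(\lambda;\bar\lambda),Z_s)$. For a real $\lambda$, one argues by induction on $\dim R_\lambda$: among the Jordan chains of maximal length $s$, some top vector $x$ must satisfy $[x,z]_H\neq 0$ for the matching bottom vector $z$ (otherwise the form would be degenerate on $R_\lambda$); rescale so that this value is $\epsilon=\pm1$. The span of that single chain is then a nondegenerate $A$-invariant subspace on which $(A,H)$ appears as $(J_s(\lambda),\epsilon Z_s)$; split it off $H$-orthogonally and recurse on its $H$-orthogonal complement, which is again $A$-invariant. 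Collecting all pieces and listing the real blocks before the nonreal ones yields the asserted $(J,K)$, with $\epsilon_1,\dots,\epsilon_p$ the sign characteristic.

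The main obstacle is the real-eigenvalue step: one must show that a single Jordan block can always be peeled off as a nondegenerate $H$-orthogonal summand carrying a well-defined sign $\epsilon_i$, while simultaneously handling chains of different lengths. The delicate point is choosing a Jordan basis compatible with the flag of subspaces $\Ker(A-\lambda I)^k$ so that, after subtracting the $H$-orthogonal projection onto an already-extracted block, the remaining vectors still form a Jordan basis; this is precisely the content of the classical argument in~\cite{Gohberg2005}, which I would reproduce in detail.
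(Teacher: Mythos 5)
The paper does not prove this statement at all: it is quoted as a known result (Theorem~5.1.1 of~\cite{Gohberg2005}), so there is no internal proof to compare yours against. Your sketch follows the standard textbook route for that theorem --- reduce $A$ to Jordan form, use $H$-selfadjointness to show root subspaces $R_\lambda$, $R_\mu$ are $H$-orthogonal unless $\mu=\bar\lambda$, then normalize the two model situations (real $\lambda$ with nondegenerate restriction; nonreal $\lambda,\bar\lambda$ neutral and pairing nondegenerately) --- and its skeleton is correct.

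Two points deserve flagging. In the nonreal case, the dual basis produced by the pairing with $R_{\bar\lambda}$ must still be adjusted and order-reversed into a Jordan chain for $A$ on $R_{\bar\lambda}$; that reversal is exactly what makes $Z_s$ appear, and it is routine. The real substance is the real-eigenvalue step, and there your write-up stops short: you only arrange that some maximal chain has nonzero top--bottom pairing $[x,z]_H$ (which, since the Hermitian form $(u,v)\mapsto[u,(A-\lambda I)^{s-1}v]_H$ cannot be identically zero without violating nondegeneracy, is indeed available, and its value is automatically real so it can be scaled to $\epsilon=\pm1$). To exhibit the block as $(J_s(\lambda),\epsilon Z_s)$ one must further correct the chain vectors by lower-order members of the chain so that the \emph{entire} Gram matrix of the chain equals $\epsilon Z_s$, verify that the span is then nondegenerate, and show that its $H$-orthogonal complement is $A$-invariant with the remaining Jordan structure intact so the induction can proceed. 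You explicitly defer exactly this to the classical argument in~\cite{Gohberg2005} rather than carrying it out, so as a self-contained proof the attempt is incomplete at its acknowledged technical heart; as an outline of the cited proof it is faithful, which matches how the paper itself uses the theorem (by citation, not proof).
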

 
For a pair $(A, H)$ of real matrices $A, H\in \mathbb{R}^{n\times n}$,   there is a real canonical form  $(J, K)$ given by  the following theorem.
\begin{thm}
\label{thm:canonical_R} (Real canonical form; Thm.~6.1.5 in~\cite{Gohberg2005})
Let $H \in \mathbb{R}^{n\times n}$ be a nonsingular  symmetric matrix and $A\in \mathbb{R}^{n\times n}$ be an $H$-symmetric matrix.  Then the pair $(A, H)$ is orthogonally similar to a real canonical pair $(J, K)$ through an invertible real transformation matrix $Q\in \mathbb{R}^{n\times n}$,  i.e.,
\begin{equation}
\label{eq:canonicalpair_real}
Q^{-1} A Q = J\quad \text{and} \qquad Q^T H Q=K,
\end{equation}
where $J$ is the real Jordan form of $A$, namely
\begin{align}
J&=J_{s_1}(\lambda_1)\oplus \hdots \oplus J_{s_p}(\lambda_p) \oplus J_{s_{p+1}}(\alpha_1,\beta_1) \oplus \hdots \oplus J_{s_{p+q}}(\alpha_q, \beta_q),
\end{align}
with real eigenvalues $\lambda_1,\hdots, \lambda_p$ and nonreal eigenvalues $\alpha_1\pm i \beta_1, \hdots, \alpha_q\pm i \beta_q$,  and
\begin{align}
K&= \epsilon_1 Z_{s_1}\oplus  \hdots \oplus  \epsilon_p Z_{s_p} \oplus  Z_{s_{p+1}} \oplus \hdots \oplus Z_{s_{p+q}},
\end{align} 
with  $\epsilon_1=\pm 1, \hdots, \epsilon_p=\pm1$.
\end{thm}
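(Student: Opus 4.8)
The plan is to recall the standard argument for this classical result (it is Theorem~6.1.5 in~\cite{Gohberg2005}), which runs in close parallel to the complex case stated as Theorem~\ref{thm:canonical_C}. First I would reduce the statement to ``isotypic'' blocks. Since $A$ is $H$-symmetric ($A^TH=HA$), one has $[r(A)x,y]_H=[x,r(A)y]_H$ for every real polynomial $r$, and hence $\ker p(A)\perp_H\ker q(A)$ whenever $p$ and $q$ are coprime real polynomials (use a B\'ezout identity $ap+bq=1$). Applying this to the factorization of the characteristic polynomial of $A$ into powers of distinct real-irreducible polynomials shows that the real root subspace of each real eigenvalue $\lambda$ and the real root subspace of each nonreal conjugate pair $\alpha\pm i\beta$ are mutually $H$-orthogonal and, since $H$ is nonsingular, each $H$-nondegenerate; these subspaces are also $A$-invariant and span $\mathbb{R}^n$. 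Thus $(A,H)$ is an $H$-orthogonal direct sum of such pairs, and it suffices to bring each one to canonical form by a real basis change.

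On the root subspace of a real eigenvalue $\lambda$, the matrix $N=A-\lambda I$ is real, nilpotent and $H$-symmetric, and $H$ restricts to a real nondegenerate symmetric form; the technical heart of the proof is the lemma that such a pair $(N,H)$ is orthogonally similar to $\bigoplus_i\bigl(J_{s_i}(0),\epsilon_iZ_{s_i}\bigr)$. I would argue by induction on the dimension. Let $s$ be the largest nilpotency index present; the bilinear form $(u,v)\mapsto[N^{s-1}u,v]_H$ is symmetric and nonzero (because $N^{s-1}\neq0$ and $H$ is nondegenerate), so it has a non-isotropic vector $x$, which automatically satisfies $N^{s-1}x\neq0$ and $\sigma:=[N^{s-1}x,x]_H\neq0$. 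Since $[N^ix,N^jx]_H=[N^{i+j}x,x]_H$ vanishes whenever $i+j\ge s$, the Gram matrix of the cyclic subspace $C_x=\mathrm{span}(x,Nx,\dots,N^{s-1}x)$ is a Hankel matrix with $\sigma$ on its main antidiagonal and zeros above it; replacing $x$ by a suitable vector in $x+\mathrm{span}(Nx,\dots,N^{s-1}x)$ clears the remaining antidiagonals and makes this Gram matrix exactly $\sigma Z_s$. Then $C_x$ is $H$-nondegenerate, so $C_x^{\perp_H}$ is $N$-invariant, $H$-nondegenerate and of smaller dimension, and the induction closes; each $\epsilon_i$ is the sign of the corresponding $\sigma$, and these signs are the sign characteristic. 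Restoring $\lambda$ by replacing $J_{s_i}(0)$ with $J_{s_i}(\lambda)$ gives the real-eigenvalue part of $(J,K)$.

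For a conjugate pair $\alpha\pm i\beta$ I would complexify the corresponding real root subspace, extending $H$ $\mathbb{C}$-bilinearly; it splits as $\mathcal R_\lambda\oplus\mathcal R_{\bar\lambda}$ with $\lambda=\alpha+i\beta$, on each summand $A-\lambda I$ (resp.\ $A-\bar\lambda I$) is nilpotent carrying a nondegenerate $\mathbb{C}$-bilinear symmetric form, and over $\mathbb{C}$ such a form has no invariant beyond the nilpotent block sizes, so the same cyclic-generator argument produces $J_s(\lambda)$-blocks paired by $Z_s$ with no sign, matched by their conjugates on $\mathcal R_{\bar\lambda}$. Taking real and imaginary parts of these Jordan bases descends the pair to a real basis of the original root subspace in which $A$ becomes the real Jordan block $J_s(\alpha,\beta)$ and $H$ becomes the corresponding sign-free $Z$-block of the stated $K$. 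Assembling the real $H$-orthogonal pieces over all eigenvalues yields a single real invertible $Q$ with $Q^{-1}AQ=J$ and $Q^THQ=K$ of the asserted form. I expect the only genuine obstacle to be the nilpotent lemma of the second paragraph --- specifically the passage from the Hankel Gram matrix to the exact $Z_s$ form, and checking that the inductive peeling can be carried out $H$-orthogonally --- while the reduction to root subspaces and the real descent for the nonreal eigenvalues are routine once that lemma is available.
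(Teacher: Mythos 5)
This theorem is not proved in the paper at all: it is quoted verbatim from the literature (Theorem~6.1.5 of Gohberg, Lancaster and Rodman's \emph{Indefinite Linear Algebra and Applications}), so there is no in-paper argument to compare against. Your sketch is, in substance, the standard textbook proof of that cited result: $H$-orthogonal reduction to root subspaces via coprimality of the elementary divisors, the cyclic/Hankel induction for a nilpotent $H$-symmetric $N$ producing the blocks $(J_{s_i}(0),\epsilon_i Z_{s_i})$ and the sign characteristic, and a complexification--descent for conjugate pairs. The core steps are sound; the non-isotropic vector for $(u,v)\mapsto[N^{s-1}u,v]_H$ exists because a nonzero symmetric bilinear form in characteristic $\neq 2$ has one, and the antidiagonal-clearing replacement $x\mapsto x+\sum_j c_jN^jx$ together with a final scaling by $|\sigma|^{-1/2}$ does yield the Gram matrix $\epsilon Z_s$ while keeping $N$ in Jordan form. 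Two points deserve to be made explicit rather than asserted: (i) for the $\mathbb{C}$-\emph{bilinear} extension of $H$ the root subspaces $\mathcal R_\lambda$ and $\mathcal R_{\bar\lambda}$ are orthogonal to each other (unlike the sesquilinear case, where they pair), which is exactly why each carries a nondegenerate symmetric bilinear form and why no signs occur for nonreal eigenvalues; and (ii) taking naive real and imaginary parts of the complex Jordan basis gives an $H$-Gram matrix that is only \emph{congruent} to $Z_s$ (e.g.\ for $s=2$ one gets a multiple of $\mathrm{diag}(1,-1)$), so a further explicit real change of basis, chosen to commute with the real Jordan structure up to the harmless replacement $\beta\mapsto-\beta$, is needed to land on the stated block $(J_s(\alpha,\beta),Z_s)$. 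With those details filled in, your argument is a correct self-contained proof of the quoted canonical form, which the paper itself simply outsources to the reference.
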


Notice that $K$ has the same block structure as $J$. The canonical  form of a pair $(A,H)$ is unique up to the  order of the blocks.   The ordered set of signs $(\epsilon_1, \ldots, \epsilon_p)$ is called the sign characteristic of the pair $(A,H)$.  The signs $\epsilon_k \,(k=1,\hdots, p)$ are uniquely determined by $(A,H)$ up to permutation of signs in the blocks of $K$ corresponding to equal Jordan blocks of  $J$.

In Theorems~\ref{thm:canonical_C} and~\ref{thm:canonical_R}, $A\in \mathbb{K}^{n\times n}$ represents  any $H$-selfadjoint matrix.   

Let $H \in \mathbb{K}^{n\times n}$ be a nonsingular selfadjoint matrix and $F \in \mathbb{K}^{n\times n}$ be a  nonsingular matrix, then $F^{[H]}F$ is an $H$-selfadjoint matrix.   
By Theorems~2.1 and~2.2 in~\cite{Bolshakov1995},  $F^{[H]}F$ belongs to a particular  kind of $H$-selfadjoint matrices.

We now prove that the negative eigenspace of $F^{[H]}F$ and of $\Sign(F^{[H]}F)$ is a hyperbolic subspace. And similarly, the negative eigenspace  of $FF^{[H]}$ and of $\Sign(FF^{[H]})$ is a hyperbolic subspace. 

\begin{thm}
\label{thm:Sigma}
Let $H \in \mathbb{K}^{n\times n}$ be a nonsingular selfadjoint matrix and $F \in \mathbb{K}^{n\times n}$ be a  nonsingular matrix.  Then the negative eigenspace of  $F^{[H]}F$ and of  $\Sigma=\Sign(F^{[H]}F)$ is a  hyperbolic subspace.  
\end{thm}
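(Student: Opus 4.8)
The plan is to reduce the statement to a count of inertias resting on the single identity $H\,F^{[H]}F = F^\dagger H F$. This says that $H\,F^{[H]}F$ is congruent to $H$ via the nonsingular matrix $F$, hence has the same inertia as $H$ by Sylvester's law of inertia; everything else is bookkeeping around this fact.

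Set $A := F^{[H]}F$, an $H$-selfadjoint matrix. Let $N_-$ be its negative eigenspace and let $N_+$ be the sum of its positive and nonreal eigenspaces; by the description of $\Sign$ in Section~\ref{sec:sign_function}, $N_-$ is also the negative eigenspace of $\Sigma=\Sign(A)$ (and $N_+$ its positive eigenspace), so it suffices to treat $N_-$. Both $N_\pm$ are $A$-invariant and $\mathbb{K}^n=N_-\oplus N_+$. I would first check that this decomposition is $H$-orthogonal: for an $H$-selfadjoint matrix one has $[p(A)x,y]_H=[x,p(A)y]_H$ for every real polynomial $p$ (and $[(A-\mu)x,y]_H=[x,(A-\bar\mu)y]_H$ in the complex case), which forces the generalized eigenspaces belonging to eigenvalues $\mu,\nu$ with $\bar\mu\neq\nu$ to be $H$-orthogonal; since every eigenvalue feeding $N_+$ is distinct from every (necessarily real, negative) eigenvalue feeding $N_-$, we get $[N_-,N_+]_H=0$. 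Because $H$ is nonsingular, an $H$-orthogonal direct-sum decomposition automatically makes each summand $H$-nondegenerate: if $x\in N_-$ is $H$-orthogonal to $N_-$ it is $H$-orthogonal to all of $\mathbb{K}^n$, hence zero. Thus $N_-$ is nondegenerate and $A|_{N_-}$, $A|_{N_+}$ are selfadjoint with respect to the restricted forms.

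The core step is a square-root/congruence argument on each summand. On $N_+$ the matrix $A|_{N_+}$ is nonsingular with no negative real eigenvalue, so it has a principal square root $B_+=(A|_{N_+})^{1/2}$; since the principal-square-root stem function commutes with complex conjugation away from the negative real axis, the facts recalled in Section~\ref{sec:sign_function} make $B_+$ selfadjoint for $H|_{N_+}$, whence $H|_{N_+}A|_{N_+}=B_+^\dagger(H|_{N_+})B_+$ is congruent to $H|_{N_+}$ and has the same inertia. Symmetrically, $-A|_{N_-}$ has only positive real eigenvalues, so $B_-=(-A|_{N_-})^{1/2}$ is selfadjoint for $H|_{N_-}$ and $-H|_{N_-}A|_{N_-}=B_-^\dagger(H|_{N_-})B_-$ is congruent to $H|_{N_-}$. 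Now combine. Inertia is additive over the $H$-orthogonal, $A$-invariant decomposition (in an adapted basis both $H$ and $HA$ are block diagonal with blocks $H|_{N_\pm}$ and $H|_{N_\pm}A|_{N_\pm}$), so from $\mathrm{inertia}(H)=\mathrm{inertia}(HA)$ and $\mathrm{inertia}(H|_{N_+})=\mathrm{inertia}(H|_{N_+}A|_{N_+})$ one cancels the $N_+$ part to get $\mathrm{inertia}(H|_{N_-})=\mathrm{inertia}(H|_{N_-}A|_{N_-})$. On the other hand $\mathrm{inertia}(H|_{N_-}A|_{N_-})$ is obtained from $\mathrm{inertia}(-H|_{N_-}A|_{N_-})=\mathrm{inertia}(H|_{N_-})$ by interchanging the numbers of positive and negative eigenvalues; hence the inertia of $H|_{N_-}$ is invariant under that interchange, i.e.\ $H|_{N_-}$ has equally many positive and negative eigenvalues and signature zero. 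Being nondegenerate of signature zero, $N_-$ is a hyperbolic subspace, and since it coincides with the negative eigenspace of $\Sigma$ the theorem follows. (The analogous statement for $FF^{[H]}$ is this result applied to the nonsingular matrix $F^{[H]}$, as $(F^{[H]})^{[H]}F^{[H]}=FF^{[H]}$.)

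I expect the main obstacle to be the care needed in the square-root step: one must ensure the principal square root actually exists on $N_+$ — which is precisely why $N_+$ is set up to absorb the entire nonreal eigenspace — and that it is genuinely $H$-selfadjoint, so that passing from $A|_{N_\pm}$ to $B_\pm$ yields a congruence and not merely a similarity. It is exactly the sign mismatch between $A|_{N_+}=B_+^2$ and $-A|_{N_-}=B_-^2$ that, fed into the additive inertia identity, pins the signature of $N_-$ to zero. A minor secondary point is phrasing the $H$-orthogonality of the eigenspace decomposition uniformly for the real and complex inner products.
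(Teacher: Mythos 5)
Your proposal is correct, but it follows a genuinely different route from the paper's proof. The paper works with the canonical form of the pair $(F^{[H]}F,H)$ (Theorems~\ref{thm:canonical_C} and~\ref{thm:canonical_R}) and imports the Bolshakov--Reichstein characterization of matrices of the form $F^{[H]}F$, namely that odd-size Jordan blocks at negative eigenvalues occur in pairs of opposite sign characteristic ($N^o_{-+}=N^o_{--}$); the even dimension and zero signature of the negative eigenspace are then read off block by block from Table~1. You instead never touch the canonical form: your engine is the single congruence $HF^{[H]}F=F^{\dagger}HF$ together with Sylvester's law of inertia, applied after splitting $\mathbb{K}^n$ into the $H$-orthogonal, $A$-invariant spectral summands $N_-$ and $N_+$ (their $H$-orthogonality and consequent nondegeneracy you justify correctly via polynomials in the $H$-selfadjoint matrix $A$, and the restricted blocks of $H$ and $HA$ are selfadjoint, so the inertia bookkeeping is sound). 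The square-root step is also sound: $A|_{N_+}$ and $-A|_{N_-}$ have no negative real eigenvalues, their principal square roots are $H$-selfadjoint on the respective summands by the facts in Section~\ref{sec:sign_function} (conjugate-symmetric stem function, reality in the real case), and $HB^2=B^{\dagger}HB$ turns them into congruences; cancelling the $N_+$ inertia and comparing $H|_{N_-}$ with $\pm H|_{N_-}A|_{N_-}$ forces equal numbers of positive and negative eigenvalues, hence a nondegenerate subspace of signature zero, i.e.\ hyperbolic. What your argument buys is self-containedness and elementarity: it needs neither the canonical form nor the external citation to~\cite{Bolshakov1995}, only Sylvester's law and the properties of the principal square root already recalled in the paper. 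What it gives up is the finer structural information the canonical-form route provides --- the explicit pairing of odd blocks with opposite sign characteristics, which connects this theorem to the $H$-polar existence condition~(\ref{eq:polar_condition}) quoted in the introduction --- so the paper's proof, while heavier, situates the result inside that classification. The closing remark reducing the $FF^{[H]}$ case to $F^{[H]}$ matches the paper's observation following the theorem.
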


\begin{proof}
The negative eigenspace of $F^{[H]}F$ is the subspace  spanned by all the generalized eigenvectors of $F^{[H]}F$ belonging to negative eigenvalues.  It coincides with the negative eigenspace of $\Sigma=\Sign(F^{[H]}F)$ through definition of the sign function. 

\begin{table}[t]
\caption{\label{tab:table1} Properties of possible block pairs $(J_s, K_s)$ that may appear  in the canonical form $(J,K)$ of $(F^{[H]}F, H)$,   where $s$ indicates the size of $J_s$ and $K_s$.
For a real eigenvalue $\lambda$,  $J_s=J_{s}(\lambda)$ and $K_s=\epsilon Z_{s}$, where $\epsilon$ is the sign characteristic corresponding to $\lambda$.  For a pair of complex conjugate eigenvalues $\lambda$ and  $\bar{\lambda}$,  the size $s$ is even, $K_s=Z_{s}$, and $J_s=J_{s}(\lambda; \bar{\lambda})$ for the complex form $(J,K)$,  $J_s=J_s(\alpha, \beta)$ with  $\lambda=\alpha \pm i\beta$ for the real form $(J,K)$.  Moreover,  $\sigma=\Sign(\lambda)$.  The last column indicates the number of blocks of kind $(J_s,K_s)$ appearing in $(J,K)$.} 
\begin{tabular}{ccccccc} 
\hline \hline 
$\lambda$ &$\sigma$& $\epsilon$ & $s$ & inertia of $K_s$ & signature of $K_s$  & $\begin{matrix}\text{number of}\\ \text{blocks}\end{matrix}$ \\[1ex]
\hline\\
$<0$& -1 & $+1$& odd &$ \left( \frac{s+1}{2}, \frac{s-1}{2} \right)$ &1 & $N^o_{-+}$ \\[1ex]
$<0$&-1&   $-1$& odd &$ \left( \frac{s-1}{2}, \frac{s+1}{2} \right)$ &-1 & $N^o_{--}$ \\[1ex]
$<0$&-1&  $+1$ &even & $ \left( \frac{s}{2}, \frac{s}{2} \right)$ & 0 & $N^e_{-+}$ \\[1ex]
$<0$&-1&   $-1$ & even & $ \left( \frac{s}{2}, \frac{s}{2} \right)$ &0 & $N^e_{--}$ \\[1ex]
$>0$  &+1&$+1$ & odd &$ \left( \frac{s+1}{2}, \frac{s-1}{2} \right)$ & 1 & $N^o_{++}$ \\[1ex]
 $>0$&+1 & $-1$ & odd & $ \left( \frac{s-1}{2}, \frac{s+1}{2} \right)$ &-1 & $N^o_{+-}$ \\[1ex]
$>0$&+1 &  $+1$ & even & $ \left( \frac{s}{2}, \frac{s}{2} \right)$ &0 & $N^e_{++}$ \\[1ex]
$>0$& +1& $-1$ &even & $ \left( \frac{s}{2}, \frac{s}{2} \right)$ &  0 & $N^e_{+-}$ \\[1ex]
nonreal  &+1 & - &  even & $ \left( \frac{s}{2}, \frac{s}{2} \right)$ &0& $N^n_{++}$ \\[1ex]
\hline \hline 
\end{tabular}
\end{table}
Let $(J,K)$ be  the  Jordan canonical form of $(F^{[H]}F, H)$ in Theorem~\ref{thm:canonical_C} or~\ref{thm:canonical_R}.
Table~\ref{tab:table1} lists the inertia and signature of each possible block  pair  in $(J,K)$.
By Theorems~2.1 and~2.2 in~\cite{Bolshakov1995},  for the  nonsingular matrix $F^{[H]}F$,  one has  
\begin{align}
\label{eq:N=N}
N^o_{-+}=N^o_{--},
\end{align}
where  $N^o_{-+}$ is the number of odd Jordan blocks with negative real eigenvalue and sign characteristic $+1$ and $N^o_{--}$ is the number of odd Jordan blocks with negative real eigenvalue and sign characteristic  $-1$.   
The  total dimension of the negative eigenspace of $F^{[H]}F$ is, from Table~\ref{tab:table1}, 
\begin{align}
n_-= \underbrace{s_h+\hdots+s_l}_{N^o_{-+}~\text{terms}}+\underbrace{s_i+\hdots +s_j}_{N^o_{--}}+\underbrace{s_u+\hdots+s_v}_{N^e_{-+}}+\underbrace{s_w+\hdots+s_z}_{N^e_{--}},
\end{align}
where the numbers under the braces indicate the number of terms in the respective sums.  Since (i) the sizes $s_u,\hdots,s_v$ and $s_w,\hdots,s_z$ are even integers, (ii) the sizes $s_h, \hdots, s_l$ and   $s_i, \hdots, s_j$ are odd integers, and (iii) $N^o_{-+}=N^o_{--}$, it follows that $n_-$ is an even integer. 
Similarly,  the signature of the  negative  eigenspace  of $F^{[H]}F$ is 
\begin{align}
\text{sig}_-=\underbrace{1+\hdots+1}_{N^o_{-+}~\text{terms}}+\underbrace{(-1)+\hdots +(-1)}_{N^o_{--}}+\underbrace{0+\hdots+0}_{N^e_{-+}}+\underbrace{0+\hdots+0}_{N^e_{--}}=0,
\end{align}
since  $N^o_{-+}=N^o_{--}$.  In addition,  each matrix $\epsilon_k Z_{s_k}$ is nonsingular, and thus its corresponding invariant subspace is nondegenerate.  Therefore the negative eigenspace of $F^{[H]}F$ is  a  hyperbolic subspace. 
\end{proof}

The proof that the negative eigenspace of $\Sigma^\prime=\Sign(FF^{[H]})$  is a hyperbolic subspace for a nonsingular matrix  $F\in \mathbb{K}^{n\times n}$ follows  from Theorem~\ref{thm:Sigma} by replacing $F$ with $F^{[H]}$.

\section{$H$-normal $H$-neutral involutory matrices}
\label{sec:neutral}

In this section we define $H$-normal $H$-neutral involutory matrices, and give some of their basic properties together with some canonical forms.

Let $H \in \mathbb{K}^{n\times n}$ be a nonsingular selfadjoint matrix.  We recall that two subspaces ${\cal U}, \ {\cal V} \subseteq \mathbb{K}^n$ are said to be orthogonal (or $H$-orthogonal) to each other if $[u,v]_H=0$ for all $u\in{\cal U}$ and $v\in{\cal V}$. If ${\cal U}$ and ${\cal V}$ are orthogonal subspaces we write ${\cal U} \perp {\cal V}$. 

We also recall that a subspace ${\cal N} \subset \mathbb{K}^n$ is called neutral (or $H$-neutral)  if $[u,v]_H=0$ for all $u,v\in {\cal N}$, i.e., \ ${\cal N} \perp {\cal N}$.  An equivalent definition  of a neutral subspace ${\cal N}$ is $[u,u]_H=0$ for all $u\in {\cal N}$. 

\subsection{Involutions}
A matrix $X\in \mathbb{K}^{n\times n}$ is involutory if $X^2=I_n$. Any involutory matrix is diagonalizable,  and its eigenvalues are $+1$ and $-1$.
For an involutory matrix $X\in \mathbb{K}^{n\times n}$, let 
\begin{align}
\Pos(X)= \{ v \in \mathbb{K}^n | Xv= v\}
\end{align}
indicate the positive eigenspace of  $X$, and let 
\begin{align}
\Neg(X)= \{ v \in \mathbb{K}^n | Xv= -v\}
\end{align}
indicate the negative eigenspace of  $X$.

\begin{lem}
\label{thm:XH_involutory}
Let $H \in \mathbb{K}^{n\times n}$ be a nonsingular selfadjoint matrix and let $X\in \mathbb{K}^{n\times n}$ be an involutory matrix. Then $X^{[H]}$ is also an involutory matrix. 
\end{lem}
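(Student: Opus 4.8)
The plan is to verify $(X^{[H]})^2 = I_n$ directly from the definition $X^{[H]} = H^{-1} X^\dagger H$, using only the elementary algebraic properties of the map $A \mapsto A^\dagger$.

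First I would record the two properties of the $H$-adjoint that are needed: $(AB)^{[H]} = B^{[H]} A^{[H]}$ for all $A,B \in \mathbb{K}^{n\times n}$, and $I_n^{[H]} = I_n$. Both follow at once from $A^{[H]} = H^{-1} A^\dagger H$ together with $(AB)^\dagger = B^\dagger A^\dagger$ and $I_n^\dagger = I_n$, which hold for both choices of $\dagger$ (transpose for the real symmetric inner product, conjugate transpose for the complex Hermitian one). Indeed, $(AB)^{[H]} = H^{-1}(AB)^\dagger H = H^{-1} B^\dagger A^\dagger H = (H^{-1} B^\dagger H)(H^{-1} A^\dagger H) = B^{[H]} A^{[H]}$.

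Next, applying this anti-automorphism property to the product $X\cdot X$ gives $(X^2)^{[H]} = X^{[H]} X^{[H]} = (X^{[H]})^2$. Since $X$ is involutory, $X^2 = I_n$, and therefore $(X^{[H]})^2 = (X^2)^{[H]} = I_n^{[H]} = I_n$, which is exactly the assertion that $X^{[H]}$ is involutory. Alternatively, one can bypass the anti-automorphism remark and simply expand $(X^{[H]})^2 = H^{-1} X^\dagger H H^{-1} X^\dagger H = H^{-1} (X^\dagger)^2 H = H^{-1}(X^2)^\dagger H = H^{-1} H = I_n$, using $(X^\dagger)^2 = (X^2)^\dagger$.

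I do not expect any genuine obstacle here; the statement is a one-line computation. The only minor point worth a remark is that in the complex Hermitian case $\dagger$ denotes the conjugate transpose, so one should observe that $\overline{(X^2)}^{\,T} = (\bar X^{T})^2$ still holds, since entrywise conjugation is multiplicative and transposition reverses products; this is routine and is already implicit in the facts about $\dagger$ quoted earlier in the paper.
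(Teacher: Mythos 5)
Your proposal is correct and follows essentially the same route as the paper, which also computes $(X^{[H]})^2=(X^2)^{[H]}=I_n^{[H]}=I_n$; you simply spell out the anti-automorphism property $(AB)^{[H]}=B^{[H]}A^{[H]}$ (harmless here since both factors equal $X$) and the underlying properties of $\dagger$ in more detail.
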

\begin{proof}
By direct calculation, one has $(X^{[H]})^2=(X^2)^{[H]}=I_n^{[H]}=I_n$. Therefore  $X^{[H]}$ is an involutory matrix. 
\end{proof}

The projection matrix onto $\Neg (X)$ is $P_X=(I_n-X)/2$.  The projection matrix onto $\Pos (X)$ is $P_X^\prime=(I_n+X)/2$. One has 
\begin{align}\label{eq:P_X}
\Im (P_X)=\Ker (P_X^\prime) =\Neg (X), \quad \quad  \Ker (P_X)=\Im (P_X^\prime)=\Pos(X).
\end{align}

\begin{lem}
\label{thm:pos_neg}
Let $H \in \mathbb{K}^{n\times n}$ be a nonsingular selfadjoint matrix and let $X \in \mathbb{K}^{n \times n}$ be an involutory matrix. Then
\\\indent(a)  $\Pos (X)=(\Neg X^{[H]})^\perp$,
\\\indent(b)  $\Neg (X)=(\Pos X^{[H]})^\perp$,
\\\indent(c)  $\Pos (X^{[H]})=(\Neg X)^\perp$,
\\\indent(d) $\Neg (X^{[H]})=(\Pos X)^\perp$.
\end{lem}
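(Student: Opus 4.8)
The plan is to deduce all four identities from a single elementary duality between images and kernels under the $H$-adjoint, together with the projection formulas in~(\ref{eq:P_X}). The first step is to record the general fact that for \emph{any} matrix $A\in\mathbb{K}^{n\times n}$ one has
\[
(\Im A)^\perp=\Ker\big(A^{[H]}\big).
\]
Indeed, $y\in(\Im A)^\perp$ means $[Ax,y]_H=0$ for all $x\in\mathbb{K}^n$, equivalently $[x,A^{[H]}y]_H=0$ for all $x$; since $H$ is nonsingular the form $[\cdot,\cdot]_H$ is nondegenerate, so this is equivalent to $A^{[H]}y=0$. This is a one-line argument and I expect no difficulty with it.

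Next I would apply this with $A=P_X=(I_n-X)/2$ and with $A=P_X^\prime=(I_n+X)/2$. By~(\ref{eq:P_X}) we have $\Im(P_X)=\Neg(X)$ and $\Im(P_X^\prime)=\Pos(X)$, so
\[
(\Neg X)^\perp=\Ker\big(P_X^{[H]}\big),\qquad (\Pos X)^\perp=\Ker\big((P_X^\prime)^{[H]}\big).
\]
Since the $H$-adjoint is additive and $I_n^{[H]}=I_n$, we get $P_X^{[H]}=(I_n-X^{[H]})/2$ and $(P_X^\prime)^{[H]}=(I_n+X^{[H]})/2$. By Lemma~\ref{thm:XH_involutory}, $X^{[H]}$ is itself involutory, so these are exactly the projection matrices attached to the involution $X^{[H]}$; applying~(\ref{eq:P_X}) to $X^{[H]}$ gives $\Ker\big((I_n-X^{[H]})/2\big)=\Pos(X^{[H]})$ and $\Ker\big((I_n+X^{[H]})/2\big)=\Neg(X^{[H]})$. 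Combining the displays yields $(\Neg X)^\perp=\Pos(X^{[H]})$ and $(\Pos X)^\perp=\Neg(X^{[H]})$, which are parts~(c) and~(d).

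Finally, for parts~(a) and~(b) I would simply apply~(c) and~(d) with $X$ replaced by $X^{[H]}$. This substitution is legitimate because $X^{[H]}$ is involutory by Lemma~\ref{thm:XH_involutory}, and because $(X^{[H]})^{[H]}=X$, which follows from $H^\dagger=H$ via $(A^{[H]})^{[H]}=H^{-1}\big(H^{-1}A^\dagger H\big)^\dagger H=H^{-1}(HAH^{-1})H=A$. Then (c) for $X^{[H]}$ reads $(\Neg X^{[H]})^\perp=\Pos(X)$, i.e.\ (a), and (d) for $X^{[H]}$ reads $(\Pos X^{[H]})^\perp=\Neg(X)$, i.e.\ (b). The argument is routine once the image/kernel duality is in place; the only point needing a bit of care is the bookkeeping that identifies $P_X^{[H]}$ and $(P_X^\prime)^{[H]}$ as the projections of the involution $X^{[H]}$ so that~(\ref{eq:P_X}) can be reused, and I do not foresee a genuine obstacle.
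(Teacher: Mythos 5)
Your proposal is correct and follows essentially the same route as the paper: both rest on the kernel/image duality under the $H$-adjoint applied to the projections $(I_n\pm X)/2$, with the remaining two identities obtained by swapping $X$ and $X^{[H]}$ (the paper cites Proposition 4.1.1 of Gohberg--Lancaster--Rodman for the duality and proves (a),(b) first, whereas you prove the duality inline and start from (c),(d) — purely cosmetic differences).
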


\begin{proof}Let $P_X=(I_n-X)/2$. By~(\ref{eq:P_X}),  one has $\Ker (P_X)=\Pos (X)$ and $\Im (P_X)=\Neg (X)$.
Since $X$ is involutory, by Lemma~\ref{thm:XH_involutory}, $X^{[H]}$ is involutory and $P_X^{[H]}=(I_n-X^{[H]})/2$, one has  $\Ker (P_X^{[H]})=\Pos (X^{[H]})$ and $\Im (P_X^{[H]})=\Neg (X^{[H]})$.

(a, b) Since $\Ker (P_X)=(\Im P_X^{[H]})^\perp$ and $\Im(P_X)= (\Ker P_X^{[H]})^\perp$ (see, e.g., Proposition 4.1.1 in~\cite{Gohberg2005}),  we have  $\Pos (X)=(\Neg X^{[H]})^\perp$ and $\Neg (X)=(\Pos X^{[H]})^\perp$.  

(c, d)  Replace $X$ with $X^{[H]}$ in (a, b).
\end{proof}

\subsection{$H$-neutral involutions} 
\begin{defn}
\label{defn:neutral_involution}
($H$-neutral involutory matrix) Let $H \in \mathbb{K}^{n\times n}$ be a nonsingular selfadjoint matrix.   An involutory matrix $X\in \mathbb{K}^{n\times n}$ is called $H$-neutral if its negative eigenspace is  $H$-neutral or $\{0 \}$.
The neutral index $m_X$ of  $X$  is defined as the dimension of the negative eigenspace $\Neg(X)$, i.e.,  $m_X=\dim(\Neg X)$.     The identity matrix $I_n$ is $H$-neutral involutory of neutral index $0$.
\end{defn}

\begin{lem}
\label{thm:NegX_PosXH}
Let $H \in \mathbb{K}^{n\times n}$ be a nonsingular selfadjoint matrix and let $X\in \mathbb{K}^{n\times n}$ be an $H$-neutral involutory matrix.   Then  $\Neg(X)\subseteq \Pos(X^{[H]})$.
\end{lem}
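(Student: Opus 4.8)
The plan is to exploit Lemma~\ref{thm:pos_neg}(d), which identifies $\Neg(X^{[H]})$ with $(\Pos X)^\perp$, and to show that the $H$-neutrality hypothesis forces $\Neg(X)$ to be $H$-orthogonal to $\Pos(X)$, hence contained in $(\Pos X)^\perp$, hence disjoint from $\Neg(X^{[H]})$; combined with a dimension count this will give $\Neg(X)\subseteq\Pos(X^{[H]})$. More concretely, since $X$ is involutory we have the direct sum decomposition $\mathbb{K}^n=\Pos(X)\oplus\Neg(X)$, and I would take an arbitrary $u\in\Neg(X)$ and an arbitrary $w\in\Pos(X)$ and show $[u,w]_H=0$. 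The natural device is the $H$-adjoint relation $[Xu,w]_H=[u,X^{[H]}w]_H$: the left side equals $[-u,w]_H=-[u,w]_H$. For the right side I would instead first establish that $\Pos(X)$ and $\Neg(X)$ are $H$-orthogonal as a consequence of $\Neg(X)$ being $H$-neutral — this is a standard fact, but let me record how: actually the cleanest route is to observe that $\Neg(X)$ being $H$-neutral means $\Neg(X)\subseteq\Neg(X)^\perp$, and then to compare dimensions using Lemma~\ref{thm:pos_neg}.

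Here is the cleaner version I would actually write. By Lemma~\ref{thm:pos_neg}(d), $\Neg(X^{[H]})=(\Pos X)^\perp$, so $\dim\Neg(X^{[H]})=n-\dim\Pos(X)=\dim\Neg(X)=m_X$. By Lemma~\ref{thm:pos_neg}(c), $\Pos(X^{[H]})=(\Neg X)^\perp$, so $\dim\Pos(X^{[H]})=n-m_X$. Now I claim $\Neg(X)\subseteq(\Neg X)^\perp=\Pos(X^{[H]})$, which is exactly the assertion; and this inclusion $\Neg(X)\subseteq(\Neg X)^\perp$ is precisely the statement that $\Neg(X)$ is an $H$-neutral subspace, which holds by hypothesis (when $\Neg(X)=\{0\}$ the inclusion is trivial). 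So in fact the lemma is an immediate consequence of Lemma~\ref{thm:pos_neg}(c) together with the definition of $H$-neutrality: $\Neg(X)$ $H$-neutral $\iff$ $\Neg(X)\perp\Neg(X)$ $\iff$ $\Neg(X)\subseteq(\Neg X)^\perp=\Pos(X^{[H]})$.

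There is no real obstacle here; the only thing to be careful about is the degenerate case $\Neg(X)=\{0\}$, where $H$-neutrality is declared by fiat in Definition~\ref{defn:neutral_involution} and the inclusion $\{0\}\subseteq\Pos(X^{[H]})$ is vacuous, and the bookkeeping of which part of Lemma~\ref{thm:pos_neg} to invoke (part (c), giving $\Pos(X^{[H]})=(\Neg X)^\perp$). I would present the proof in three short lines: invoke Lemma~\ref{thm:pos_neg}(c) to rewrite $\Pos(X^{[H]})$ as $(\Neg X)^\perp$; recall that $\Neg(X)$ being $H$-neutral means $[u,v]_H=0$ for all $u,v\in\Neg(X)$, i.e.\ $\Neg(X)\subseteq(\Neg X)^\perp$; conclude $\Neg(X)\subseteq(\Neg X)^\perp=\Pos(X^{[H]})$.
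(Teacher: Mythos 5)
Your final argument is correct and is essentially identical to the paper's proof: $H$-neutrality gives $\Neg(X)\subseteq(\Neg X)^\perp$, and Lemma~\ref{thm:pos_neg}(c) identifies $(\Neg X)^\perp$ with $\Pos(X^{[H]})$. The earlier detours about dimension counts and disjointness from $\Neg(X^{[H]})$ are unnecessary but harmless.
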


\begin{proof}
Since $\Neg (X)$ is neutral, $\Neg(X)\subseteq (\Neg X)^\perp$.  By Lemma~\ref{thm:pos_neg}(c), $(\Neg X)^\perp=\Pos(X^{[H]})$. Therefore $\Neg(X)\subseteq \Pos(X^{[H]})$.
\end{proof}

\begin{lem}
\label{thm:NegX_NegXH}
Let $H \in \mathbb{K}^{n\times n}$ be a nonsingular selfadjoint matrix and let $X\in \mathbb{K}^{n\times n}$ be  an $H$-neutral involutory matrix. 
Then $\Neg(X)\cap \Neg(X^{[H]})=\{0\}$.
\end{lem}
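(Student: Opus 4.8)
The plan is to combine the previous lemma with a dimension count. By Lemma~\ref{thm:NegX_PosXH}, $\Neg(X)\subseteq\Pos(X^{[H]})$. On the other hand, $X^{[H]}$ is involutory by Lemma~\ref{thm:XH_involutory}, so $\mathbb{K}^n=\Pos(X^{[H]})\oplus\Neg(X^{[H]})$ is a direct sum; in particular $\Pos(X^{[H]})\cap\Neg(X^{[H]})=\{0\}$. Since $\Neg(X)$ is contained in $\Pos(X^{[H]})$, any vector lying in both $\Neg(X)$ and $\Neg(X^{[H]})$ must lie in $\Pos(X^{[H]})\cap\Neg(X^{[H]})=\{0\}$. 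Hence $\Neg(X)\cap\Neg(X^{[H]})=\{0\}$, which is exactly the claim.

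In other words, there is essentially nothing to do beyond invoking Lemma~\ref{thm:NegX_PosXH} and the fact that the $+1$ and $-1$ eigenspaces of the involution $X^{[H]}$ intersect trivially. I do not anticipate any real obstacle here; the only point worth stating carefully is that $X^{[H]}$ is again an involution (so that its eigenspace decomposition is available), which is precisely the content of Lemma~\ref{thm:XH_involutory}. One could alternatively phrase the argument via the projections $P_X^{[H]}=(I_n-X^{[H]})/2$ and its complement, using $\Im(P_X^{[H]})=\Neg(X^{[H]})$ and $\Ker(P_X^{[H]})=\Pos(X^{[H]})$ from~(\ref{eq:P_X}), but the eigenspace formulation is cleaner. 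So the proof is a two-line argument, and I would simply write it out directly rather than elaborate a strategy.
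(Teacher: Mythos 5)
Your proposal is correct and follows essentially the same route as the paper: invoke Lemma~\ref{thm:NegX_PosXH} to get $\Neg(X)\subseteq\Pos(X^{[H]})$ and then use $\Pos(X^{[H]})\cap\Neg(X^{[H]})=\{0\}$. The extra remark about $X^{[H]}$ being involutory (Lemma~\ref{thm:XH_involutory}) is harmless and merely makes explicit what the paper leaves implicit.
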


\begin{proof}
By Lemma~\ref{thm:NegX_PosXH}, $\Neg(X)\subseteq \Pos(X^{[H]})$. Since $\Pos(X^{[H]})\cap \Neg(X^{[H]})=\{0\}$,   it  follows that  $\Neg(X)\cap \Neg(X^{[H]})=\{0\}$.
\end{proof}

\begin{pro}
\label{thm:H_neutral}
Let $H \in \mathbb{K}^{n\times n}$ be a nonsingular selfadjoint matrix.  A matrix $X \in \mathbb{K}^{n\times n}$ is $H$-neutral involutory if and only if  $X^2=I_n$ and $X^{[H]}X=X^{[H]}+X-I_n$.
\end{pro}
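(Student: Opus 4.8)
The plan is to collapse both implications onto the single projection identity $P_X^{[H]}P_X = 0$, where $P_X = (I_n - X)/2$, and then read off the geometric meaning using the subspace lemmas already in hand. First I would set up the dictionary between the involution $X$ and its spectral projection. If $X$ is involutory, then by Lemma~\ref{thm:XH_involutory} so is $X^{[H]}$, and $P_X = (I_n - X)/2$, $P_X^{[H]} = (I_n - X^{[H]})/2$ are the spectral projections onto $\Neg(X)$ and $\Neg(X^{[H]})$ along $\Pos(X)$ and $\Pos(X^{[H]})$ respectively; in particular $\Im(P_X) = \Neg(X)$ and $\Ker(P_X^{[H]}) = \Pos(X^{[H]})$, the latter by applying~(\ref{eq:P_X}) to $X^{[H]}$ in place of $X$.

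Next I would substitute $X = I_n - 2P_X$ and $X^{[H]} = I_n - 2P_X^{[H]}$ into the claimed identity. A one-line expansion gives
\[
X^{[H]}X - \bigl(X^{[H]} + X - I_n\bigr) = 4\,P_X^{[H]}P_X ,
\]
so that, once $X$ (hence $X^{[H]}$) is involutory, the identity $X^{[H]}X = X^{[H]} + X - I_n$ is equivalent to $P_X^{[H]}P_X = 0$, which in turn is equivalent to $\Im(P_X) \subseteq \Ker(P_X^{[H]})$, i.e.\ to $\Neg(X) \subseteq \Pos(X^{[H]})$. This equivalence is purely formal and is available in both directions. For the forward implication I would then argue: if $X$ is $H$-neutral involutory, then $X^2 = I_n$ by Definition~\ref{defn:neutral_involution}, and Lemma~\ref{thm:NegX_PosXH} gives exactly $\Neg(X) \subseteq \Pos(X^{[H]})$, hence the identity holds. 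For the converse: assume $X^2 = I_n$ and $X^{[H]}X = X^{[H]} + X - I_n$; then $X$ is involutory, the equivalence applies and yields $\Neg(X) \subseteq \Pos(X^{[H]})$, and by Lemma~\ref{thm:pos_neg}(c) we have $\Pos(X^{[H]}) = (\Neg X)^\perp$, so $\Neg(X) \subseteq (\Neg X)^\perp$; thus $[u,v]_H = 0$ for all $u,v \in \Neg(X)$, so $\Neg(X)$ is $H$-neutral (or $\{0\}$) and $X$ is $H$-neutral involutory.

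I do not expect a genuine obstacle here: all the substance is already packaged in Lemmas~\ref{thm:XH_involutory}, \ref{thm:pos_neg} and~\ref{thm:NegX_PosXH}, and the only computation is the expansion of $X^{[H]}X$ in terms of $P_X$ and $P_X^{[H]}$, which is routine. The one point that needs a moment of care is that $P_X^{[H]} = (I_n - X^{[H]})/2$ genuinely is the spectral projection associated with $X^{[H]}$ (so that $\Ker(P_X^{[H]}) = \Pos(X^{[H]})$ via~(\ref{eq:P_X}))—this relies on $X^{[H]}$ being involutory, which is precisely Lemma~\ref{thm:XH_involutory}—and that the image/kernel identifications are applied to the correct projection on each side of $P_X^{[H]}P_X$.
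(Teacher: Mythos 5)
Your proof is correct, but the two directions are not split the same way as in the paper, and the comparison is worth recording. For the direction ``$H$-neutral involutory $\Rightarrow$ identity'' you and the paper do essentially the same thing: the paper also invokes Lemma~\ref{thm:NegX_PosXH} and then writes $(I_n-X^{[H]})(I_n-X)v=0$ for all $v$, which is exactly your $P_X^{[H]}P_X=0$ up to the factor $4$. The genuine difference is in the converse. The paper proves it by a direct inner-product computation: for $u,v\in\Neg(X)$ it expands $[u,v]_H=[Xu,Xv]_H=[u,(X^{[H]}+X-I_n)v]_H=-3[u,v]_H$, forcing $[u,v]_H=0$; this is completely elementary and uses nothing beyond the hypothesis. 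You instead run the same projection identity backwards, reading $X^{[H]}X-X^{[H]}-X+I_n=4P_X^{[H]}P_X=0$ as $\Im(P_X)\subseteq\Ker(P_X^{[H]})$, i.e.\ $\Neg(X)\subseteq\Pos(X^{[H]})$, and then convert this to neutrality via Lemma~\ref{thm:pos_neg}(c), $\Pos(X^{[H]})=(\Neg X)^\perp$. Your version buys symmetry and transparency: a single algebraic identity makes the equivalence of the operator equation with the inclusion $\Neg(X)\subseteq\Pos(X^{[H]})$ manifest, and both implications become translations of that inclusion. The cost is that your converse leans on Lemma~\ref{thm:pos_neg} (hence on the kernel--image duality and the nondegeneracy of $H$), whereas the paper's computation needs none of that machinery in this direction. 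You were right to flag the one delicate point, namely that $P_X^{[H]}=(I_n-X^{[H]})/2$ is the spectral projection of $X^{[H]}$, which requires Lemma~\ref{thm:XH_involutory} before applying~(\ref{eq:P_X}); with that noted, there is no gap and no circularity, since Lemmas~\ref{thm:pos_neg} and~\ref{thm:NegX_PosXH} are established independently of this proposition.
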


\begin{proof}  Sufficiency: Let an involutory matrix $X\in \mathbb{K}^{n\times n}$,  satisfy $X^{[H]}X=X^{[H]}+X-I_n$.  Let  $u,v \in  \Neg X$.  Then  
\begin{align}
[u,v]_H=&[-Xu,-Xv]_H=[Xu,Xv]_H=[u,X^{[H]}X v]_H \nonumber \\
=&[u, (X^{[H]}+X-I_n)v]_H=[Xu,v]_H+[u,Xv]_H-[u,v]_H \nonumber \\
=&[-u,v]_H+[u,-v]_H-[u,v]_H=-3[u,v]_H,
\end{align}
thus $[u,v]_H=0$, i.e., $\Neg (X)$ is $H$-neutral.   By Definition~\ref{defn:neutral_involution},  $X$ is $H$-neutral involutory.

Necessity:   Let $X\in \mathbb{K}^{n\times n}$ be an $H$-neutral involutory matrix, then  $X^2=I_n$. By Lemma~\ref{thm:NegX_PosXH}, $\Neg(X)\subseteq \Pos(X^{[H]})$.
It follows that, using~(\ref{eq:P_X}), $\Im (I_n-X) = \Im (P_X)= \Neg X\subseteq \Pos (X^{[H]})=\Ker (P_X^{[H]})=\Ker (I_n-X^{[H]})$. Thus   $(I_n-X^{[H]})(I_n-X)v=0$ for all  $v \in \mathbb{K}^{n}$. Therefore $X^{[H]}X-X^{[H]}-X+I_n=0$.
\end{proof}

\subsection{$H$-normal $H$-neutral involutions}

\begin{defn}
($H$-normal $H$-neutral involutory matrix) Let $H \in \mathbb{K}^{n\times n}$ be a nonsingular selfadjoint matrix.   An involutory matrix $X\in \mathbb{K}^{n\times n}$ is called $H$-normal $H$-neutral if it is both $H$-neutral and  $H$-normal.
\end{defn}

\begin{thm} 
\label{thm:X_XH}
Let $H \in \mathbb{K}^{n\times n}$ be a nonsingular selfadjoint matrix and let $X \in \mathbb{K}^{n\times n}$ be an $n\times n$ matrix.  The following statements  are equivalent.
\\\indent(a) $X$ is $H$-normal $H$-neutral involutory.  
\\\indent(b) $X$ and $X^{[H]}$ are $H$-neutral involutory. 
\\\indent(c) $X^2=I_n$ and $X^{[H]}X=XX^{[H]}=X^{[H]}+X-I_n$.
\end{thm}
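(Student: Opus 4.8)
The plan is to prove the cycle of implications (a) $\Rightarrow$ (b) $\Rightarrow$ (c) $\Rightarrow$ (a). Each link should be short given the machinery already assembled in Section~\ref{sec:neutral}, so the main work is bookkeeping rather than any single hard step.

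\emph{For (a) $\Rightarrow$ (b):} Assume $X$ is $H$-normal $H$-neutral involutory. Then $X$ is $H$-neutral involutory by definition, so it remains only to see that $X^{[H]}$ is too. By Lemma~\ref{thm:XH_involutory}, $X^{[H]}$ is involutory, so I just need $\Neg(X^{[H]})$ to be $H$-neutral. Here is where $H$-normality enters: since $XX^{[H]}=X^{[H]}X$, the subspace $\Neg(X^{[H]})$ is $X$-invariant, and more to the point $X$ restricted to it is again an involution that commutes with $X^{[H]}$. The cleanest route is probably to use Lemma~\ref{thm:pos_neg}(d), $\Neg(X^{[H]})=(\Pos X)^\perp$, together with Lemma~\ref{thm:NegX_PosXH} applied to $X$ (which gives $\Neg(X)\subseteq\Pos(X^{[H]})$): one wants to show $\Neg(X^{[H]})\subseteq(\Neg X^{[H]})^\perp=\Pos(X)$. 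Take $v\in\Neg(X^{[H]})$; using $X^{[H]}v=-v$ and the commutation relation one computes $[v,v]_H=[X^{[H]}v,X^{[H]}v]_H$-type identities, or more directly, since $(X^{[H]})^{[H]}=X$ and the hypothesis is symmetric in $X\leftrightarrow X^{[H]}$ once we know both are $H$-normal, one may simply invoke Proposition~\ref{thm:H_neutral} in reverse. Concretely: $X$ $H$-neutral involutory gives $X^{[H]}X=X^{[H]}+X-I_n$; combined with $H$-normality this also equals $XX^{[H]}$, and this last relation is exactly the Proposition~\ref{thm:H_neutral} criterion for $X^{[H]}$ (which is involutory) to be $H$-neutral. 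This observation actually folds (a) $\Rightarrow$ (b) and part of (c) together, so I would organize the argument around it.

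\emph{For (b) $\Rightarrow$ (c):} Since $X$ is involutory, $X^2=I_n$. Since $X$ is $H$-neutral involutory, Proposition~\ref{thm:H_neutral} gives $X^{[H]}X=X^{[H]}+X-I_n$. Since $X^{[H]}$ is also $H$-neutral involutory, applying Proposition~\ref{thm:H_neutral} to $X^{[H]}$ (whose $H$-adjoint is $X$, as $(X^{[H]})^{[H]}=H^{-1}(H^{-1}X^\dagger H)^\dagger H = X$) gives $XX^{[H]}=X+X^{[H]}-I_n$. The right-hand sides coincide, so $X^{[H]}X=XX^{[H]}=X^{[H]}+X-I_n$, which is (c).

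\emph{For (c) $\Rightarrow$ (a):} The relation $X^2=I_n$ says $X$ is involutory. The relation $X^{[H]}X=X^{[H]}+X-I_n$ is precisely the sufficient condition in Proposition~\ref{thm:H_neutral}, so $X$ is $H$-neutral involutory. The extra relation $X^{[H]}X=XX^{[H]}$ is exactly $H$-normality. Hence $X$ is $H$-normal $H$-neutral involutory.

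The only place requiring genuine care — and the step I expect to be the main obstacle — is verifying that ``$X^{[H]}$ is $H$-neutral involutory'' really is equivalent to the single commuting relation $XX^{[H]}=X^{[H]}+X-I_n$ via Proposition~\ref{thm:H_neutral}, i.e., checking that Proposition~\ref{thm:H_neutral} is being applied with the correct matrix in the correct role and that $(X^{[H]})^{[H]}=X$. Once that identification is made explicit, everything else is a formal rearrangement of the three displayed identities, and the three implications close the loop.
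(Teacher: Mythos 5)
Your proposal is correct and follows essentially the same route as the paper: all three implications reduce to Proposition~\ref{thm:H_neutral}, with the key step being to apply it with $X$ replaced by $X^{[H]}$ (using $(X^{[H]})^{[H]}=X$ and Lemma~\ref{thm:XH_involutory}), exactly as the paper does. The detours you sketch in (a)~$\Rightarrow$~(b) via Lemma~\ref{thm:pos_neg} are unnecessary, and the concrete argument you settle on matches the published proof.
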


\begin{proof}
(a)$\to$(b):  Since $X$ is involutory, by Lemma~\ref{thm:XH_involutory},  $X^{[H]}$ is involutory.  Since $X$ is $H$-neutral,  by Proposition~\ref{thm:H_neutral}, $X^{[H]}X=X^{[H]}+X-I_n$. Since $X$ is $H$-normal,  $X^{[H]}X=XX^{[H]}$. Therefore, $XX^{[H]}=X+X^{[H]}-I_n$. By Proposition~\ref{thm:H_neutral} with $X$ replaced by $X^{[H]}$, $X^{[H]}$ is $H$-neutral.

(b)$\to$(c): Since $X$ is $H$-neutral  involutory,  by Proposition~\ref{thm:H_neutral},  $X^2=I_n$ and $X^{[H]}X=X^{[H]}+X-I_n$. Since $X^{[H]}$ is $H$-neutral  involutory, by Proposition~\ref{thm:H_neutral},  $XX^{[H]}=X^{[H]}+X-I_n$.
Combining the equalities, we have $X^{[H]}X=XX^{[H]}=X^{[H]}+X-I_n$.

(c)$\to$(a): Since $X^{[H]}X=XX^{[H]}$, $X$ is $H$-normal.  Since $X^2=I_n$ and  $X^{[H]}X=X^{[H]}+X-I_n$, by Proposition~\ref{thm:H_neutral}, $X$ is $H$-neutral  involutory.  So $X$ is $H$-normal $H$-neutral involutory.  
\end{proof}

\begin{pro}
\label{thm:xh_is_neutral_involutory}
Let $H \in \mathbb{K}^{n\times n}$ be a nonsingular selfadjoint matrix. If $X\in \mathbb{K}^{n\times n}$ is $H$-normal $H$-neutral involutory,  so is its $H$-adjoint $X^{[H]}$. Moreover, $X$ and $X^{[H]}$ have the same neutral index.
\end{pro}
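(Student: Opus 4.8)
The plan is to reduce everything to the characterization in Theorem~\ref{thm:X_XH}(c), which is manifestly symmetric under the exchange $X \leftrightarrow X^{[H]}$ once one knows that $(X^{[H]})^{[H]} = X$. So the first step is to record that the $H$-adjoint is an involution on matrices: since $A^{[H]} = H^{-1}A^\dagger H$ and $(\cdot)^\dagger$ satisfies $(A^\dagger)^\dagger = A$ with $H^\dagger = H$, a one-line computation gives $(X^{[H]})^{[H]} = H^{-1}(H^{-1}X^\dagger H)^\dagger H = H^{-1} H^\dagger X H^{-\dagger} H = X$. Then, starting from the hypothesis that $X$ is $H$-normal $H$-neutral involutory, Theorem~\ref{thm:X_XH}(a)$\to$(c) yields $X^2 = I_n$ and $X^{[H]}X = XX^{[H]} = X^{[H]} + X - I_n$. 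Applying $(\cdot)^{[H]}$ to $X^2 = I_n$ gives $(X^{[H]})^2 = (X^2)^{[H]} = I_n$; and the equality $X^{[H]}X = XX^{[H]} = X^{[H]} + X - I_n$ is visibly invariant in form when we substitute $Y := X^{[H]}$ (so $Y^{[H]} = X$), becoming $Y^{[H]}Y = YY^{[H]} = Y^{[H]} + Y - I_n$. Hence $Y = X^{[H]}$ satisfies condition (c) of Theorem~\ref{thm:X_XH}, so by (c)$\to$(a) it is $H$-normal $H$-neutral involutory. This disposes of the first assertion with essentially no computation beyond the adjoint-is-involutive remark.

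For the statement about the neutral index, the plan is to show $\dim \Neg(X^{[H]}) = \dim \Neg(X)$, equivalently (since both $X$ and $X^{[H]}$ are involutions, with $\mathbb{K}^n = \Pos \oplus \Neg$ in each case) $\dim \Pos(X^{[H]}) = \dim \Pos(X)$, equivalently $\operatorname{codim}\Pos(X^{[H]}) = \operatorname{codim}\Pos(X)$. I would invoke Lemma~\ref{thm:pos_neg}: part~(d) says $\Neg(X^{[H]}) = (\Pos X)^\perp$, and since $H$ is nonsingular, $\dim(\mathcal{W}^\perp) = n - \dim \mathcal{W}$ for any subspace $\mathcal{W}$. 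Therefore $m_{X^{[H]}} = \dim \Neg(X^{[H]}) = \dim (\Pos X)^\perp = n - \dim \Pos(X)$. But $X$ is an involution, so $\dim \Pos(X) = n - \dim \Neg(X) = n - m_X$; substituting gives $m_{X^{[H]}} = n - (n - m_X) = m_X$.

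The argument is short and I do not expect a genuine obstacle; the only point requiring a moment of care is making sure the $H$-adjoint really is an involution on $\mathbb{K}^{n\times n}$ (this uses $H^\dagger = H$, i.e.\ the selfadjointness of $H$, in both the real and complex cases), since the whole first half of the proof hinges on the formal $X \leftrightarrow X^{[H]}$ symmetry of condition (c). One could alternatively avoid even that by noting it is already implicit in Lemma~\ref{thm:XH_involutory} together with Theorem~\ref{thm:X_XH}(b), which is phrased symmetrically in $X$ and $X^{[H]}$: if $X$ is $H$-normal $H$-neutral involutory then by (a)$\to$(b) both $X$ and $X^{[H]}$ are $H$-neutral involutory, and then one checks $X^{[H]}$ is also $H$-normal directly from $XX^{[H]} = X^{[H]}X$. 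Either route is clean; I would present the condition-(c) version as the primary one for brevity.
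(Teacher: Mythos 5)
Your argument is correct, and the first assertion is handled exactly as the paper does it: the paper's proof simply says the statement "follows directly from Theorem~\ref{thm:X_XH}," which is the symmetry of condition (c) under $X\leftrightarrow X^{[H]}$ that you spell out (your verification that the $H$-adjoint is an involution, using $H^\dagger=H$, is the right point to make explicit). Where you genuinely diverge is the neutral-index claim. The paper argues spectrally: $X^{[H]}=H^{-1}X^\dagger H$ is similar to $X^\dagger$, and $X^\dagger$ has the same eigenvalues with the same multiplicities as $X$ because the eigenvalues of $X$ are real ($\pm1$); hence the $-1$ eigenspaces of $X$ and $X^{[H]}$ have equal dimension. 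You instead use the duality of Lemma~\ref{thm:pos_neg}(d), $\Neg(X^{[H]})=(\Pos X)^\perp$, together with the fact that nondegeneracy of $H$ gives $\dim\mathcal{W}^\perp=n-\dim\mathcal{W}$, and then the involution identity $\dim\Pos(X)=n-m_X$. Both routes are short and sound; yours stays entirely inside the $H$-geometry already developed in Section~4 and needs no reference to $X^\dagger$ or conjugation, at the cost of invoking the standard complement-dimension formula for a nondegenerate inner product (which the paper does not state explicitly, so if you present this version you should cite or prove that fact), while the paper's version trades that for an elementary similarity-plus-real-eigenvalues observation. Note also that Lemma~\ref{thm:pos_neg} is stated for arbitrary involutions, so your appeal to it is legitimate without the $H$-neutrality hypothesis.
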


\begin{proof}
That $X^{[H]}$ is $H$-normal $H$-neutral involutory follows directly from Theorem~\ref{thm:X_XH}. That $X$ and $X^{[H]}$ have the same neutral index follows from the fact that $X^{[H]}=H^{-1} X^\dagger H$ is similar to $X^\dagger$, and $X^\dagger$ has the same eigenvalues, with the same multiplicities, as $X$, since the eigenvalues of $X$ are real.
\end{proof}

\begin{pro}
\label{thm:Sigma_2m}
Let $H \in \mathbb{K}^{n\times n}$ be a nonsingular selfadjoint matrix.
If $X \in \mathbb{K}^{n \times n}$ is an $H$-normal $H$-neutral involutory matrix of neutral index $m$, then the $H$-selfadjoint matrix  $X^{[H]}X$ is involutory and its negative eigenspace is hyperbolic of dimension $2m$.
\end{pro}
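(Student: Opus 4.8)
The plan is to study the $H$-selfadjoint matrix $M:=X^{[H]}X$ directly, combining the algebraic identities of Theorem~\ref{thm:X_XH} with the subspace inclusions of Lemmas~\ref{thm:NegX_PosXH} and~\ref{thm:NegX_NegXH}. First I would record that $M$ is involutory: $M^{[H]}=X^{[H]}(X^{[H]})^{[H]}=X^{[H]}X=M$, so $M$ is $H$-selfadjoint (hence diagonalizable with eigenvalues in $\{+1,-1\}$), and, using $H$-normality $XX^{[H]}=X^{[H]}X$ together with Lemma~\ref{thm:XH_involutory}, $M^2=X^{[H]}(XX^{[H]})X=(X^{[H]})^2X^2=I_n$. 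Theorem~\ref{thm:X_XH}(c) also gives the convenient form $M=X+X^{[H]}-I_n$.

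Next I would compute $\Neg(M)$. From $M=X+X^{[H]}-I_n$ one has $Mv=-v$ exactly when $(X+X^{[H]})v=0$, so $\Neg(M)=\Ker(X+X^{[H]})$. If $v\in\Neg(X)$ then $Xv=-v$ and, by Lemma~\ref{thm:NegX_PosXH}, $v\in\Pos(X^{[H]})$ so $X^{[H]}v=v$; hence $(X+X^{[H]})v=0$. Applying the same lemma to $X^{[H]}$, which is again $H$-normal $H$-neutral involutory by Proposition~\ref{thm:xh_is_neutral_involutory}, gives $\Neg(X^{[H]})\subseteq\Ker(X+X^{[H]})$ as well, and the sum $\Neg(X)+\Neg(X^{[H]})$ is direct by Lemma~\ref{thm:NegX_NegXH}. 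Conversely, for $v$ with $(X+X^{[H]})v=0$ I would use the splitting $v=\frac12(v-Xv)+\frac12(v+Xv)$: the first summand lies in $\Neg(X)$ by~(\ref{eq:P_X}), and since $X^{[H]}v=-Xv$ and $X^{[H]}Xv=XX^{[H]}v=-v$ one checks $X^{[H]}\bigl(\frac12(v+Xv)\bigr)=-\frac12(v+Xv)$, so the second summand lies in $\Neg(X^{[H]})$. Thus $\Neg(M)=\Neg(X)\oplus\Neg(X^{[H]})$, and since $\dim\Neg(X)=\dim\Neg(X^{[H]})=m$ by Proposition~\ref{thm:xh_is_neutral_involutory}, $\dim\Neg(M)=2m$.

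Finally I would check hyperbolicity. Because $M$ is $H$-selfadjoint, its eigenspaces $\Neg(M)$ and $\Pos(M)$ (belonging to the distinct real eigenvalues $-1$ and $+1$) are $H$-orthogonal and span $\mathbb{K}^n$; as $H$ is nonsingular, this forces $\Neg(M)$ to be nondegenerate. So $\Neg(M)$ is a nondegenerate subspace of dimension $2m$ containing the neutral subspace $\Neg(X)$ of dimension $m$, and since a neutral subspace of a nondegenerate inner product space of inertia $(p,q)$ has dimension at most $\min(p,q)$, the inertia of $\Neg(M)$ must be $(m,m)$; hence its signature is zero and $\Neg(M)=\Neg(X^{[H]}X)$ is hyperbolic of dimension $2m$. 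I expect the only delicate points to be the reverse inclusion in the second step (the explicit splitting of a vector killed by $X+X^{[H]}$) and the appeal to the maximal-dimension property of neutral subspaces to pin the signature to zero; everything else is a direct computation.
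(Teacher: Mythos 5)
Your proof is correct, but it follows a genuinely different route from the paper's. The paper gets the dimension count by a trace argument: since $X^{[H]}X=X^{[H]}+X-I_n$ and $\tr(X)=\tr(X^{[H]})=n-2m$, one has $\tr(X^{[H]}X)=n-4m$, whence $n_-=2m$; hyperbolicity is then imported wholesale from Theorem~\ref{thm:Sigma} (which rests on the canonical form of the pair $(F^{[H]}F,H)$ and the Bolshakov--Reichstein pairing of sign characteristics), applied with $F=X$. You instead identify the negative eigenspace explicitly, proving $\Neg(X^{[H]}X)=\Neg(X)\dot{+}\Neg(X^{[H]})$ directly --- including the reverse inclusion via the splitting $v=\tfrac12(v-Xv)+\tfrac12(v+Xv)$ --- which is precisely the content of the paper's later Lemma~\ref{thm:XXS_subspace}(b), except that the paper proves that lemma by a dimension count that cites this very proposition, whereas your argument is self-contained and non-circular (and could in fact shorten that later lemma). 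Your hyperbolicity argument is also more elementary: nondegeneracy of $\Neg(X^{[H]}X)$ comes from the $H$-orthogonal eigenspace decomposition of the $H$-selfadjoint involution, and the signature is pinned to zero because the $2m$-dimensional nondegenerate subspace contains the $m$-dimensional neutral subspace $\Neg(X)$, forcing inertia $(m,m)$; this avoids canonical forms entirely, at the cost of invoking the standard maximal-dimension bound for neutral subspaces. One small caution: your parenthetical ``$H$-selfadjoint, hence diagonalizable with eigenvalues in $\{+1,-1\}$'' misattributes the reason --- $H$-selfadjointness alone gives neither; diagonalizability and the spectrum $\{\pm1\}$ follow from the involutority $M^2=I_n$, which you establish immediately afterwards, so the slip is harmless but should be rephrased.
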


\begin{proof} 
By direct calculation, $(X^{[H]}X)^2=X^{[H]}X X^{[H]}X=X^{[H]}X^{[H]}XX=I_n$, so $X^{[H]}X$ is involutory. It follows that $X^{[H]}X$  has eigenvalues $+1$ and $-1$ only. Let $n_+$ and $n_-$  denote the number of positive (i.e., $+1$) and negative (i.e., $-1$) eigenvalues. One has $n_++n_-=n$ and $\tr(X^{[H]}X)=n_+-n_-$. On the other hand, since $X$ is involutory of neutral index $m$, it has $m$ negative eigenvalues equal to $-1$ and $n-m$ positive eigenvalues equal to $+1$. Thus $\tr(X) = (n-m)-m=n-2m$. Hence, using $\tr(X^{[H]})=\tr(X)$ and $X^{[H]}X=X^{[H]}+X-I_n$, one has
\begin{align}
\tr(X^{[H]}X)=&\tr(X^{[H]}+X-I_n)=\tr(X^{[H]})+\tr(X)-n \nonumber \\
=&n-2m+n-2m-n=n-4m.
\end{align}
Thus, from $n_++n_-=n$ and $\tr(X^{[H]}X)=n_+-n_-=n-4m$, it follows that $n_-=2m$, that is the negative eigenspace of $X^{[H]}X$ has dimension $2m$.  Moreover, by Theorem~\ref{thm:Sigma},  the negative eigenspace  of $X^{[H]}X$ is hyperbolic. Therefore, the negative eigenspace  of $X^{[H]}X$ is hyperbolic of dimension $2m$.
\end{proof}

We now prove two lemmas to be used in the derivation of the canonical forms of an $H$-normal $H$-neutral involutory matrix $X$.

\begin{lem}
\label{thm:XXS_subspace} 
Let $H \in \mathbb{K}^{n\times n}$ be a nonsingular selfadjoint matrix, let $X \in \mathbb{K}^{n\times n}$ be an $H$-normal $H$-neutral involutory matrix, and let $\Phi=X^{[H]}X=XX^{[H]}$.  Then,
\\\indent(a)  $\Pos (\Phi)=\Pos (X) \cap \Pos (X^{[H]})$,
\\\indent(b) $\Neg(\Phi)=\Neg(X)\dot{+}\Neg(X^{[H]})$.
\end{lem}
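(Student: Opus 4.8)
The plan is to use the involutory structure of $X$, $X^{[H]}$, and $\Phi$ established in Theorem~\ref{thm:X_XH}, together with the explicit relation $\Phi = X^{[H]}+X-I_n$, to compute the eigenspaces of $\Phi$ directly. First I would record that $\Phi$ is involutory (Proposition~\ref{thm:Sigma_2m}), so $\mathbb{K}^n = \Pos(\Phi)\dot{+}\Neg(\Phi)$, and that $X$, $X^{[H]}$ commute (both with each other and with $\Phi$), so the four subspaces $\Pos(X)$, $\Neg(X)$, $\Pos(X^{[H]})$, $\Neg(X^{[H]})$ and $\Pos(\Phi)$, $\Neg(\Phi)$ are mutually compatible: a simultaneous eigenvector decomposition is available.

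For part (a): if $v\in\Pos(X)\cap\Pos(X^{[H]})$ then $Xv=v$ and $X^{[H]}v=v$, so $\Phi v = X^{[H]}X v = v$, giving $v\in\Pos(\Phi)$; hence $\Pos(X)\cap\Pos(X^{[H]})\subseteq\Pos(\Phi)$. For the reverse inclusion I would use $\Phi = X^{[H]}+X-I_n$: if $v\in\Pos(\Phi)$ then $(X^{[H]}+X-I_n)v = v$, i.e. $X^{[H]}v + Xv = 2v$. Now decompose $v$ in the simultaneous eigenbasis of the commuting pair $(X,X^{[H]})$; on each common eigenvector the equation $\mu + \nu = 2$ with $\mu,\nu\in\{\pm1\}$ forces $\mu=\nu=1$. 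Hence $v\in\Pos(X)\cap\Pos(X^{[H]})$. This proves (a).

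For part (b): by Lemma~\ref{thm:NegX_NegXH} we have $\Neg(X)\cap\Neg(X^{[H]})=\{0\}$, so the sum $\Neg(X)\dot{+}\Neg(X^{[H]})$ is direct. If $v\in\Neg(X)$ then $Xv=-v$ and, since $\Neg(X)\subseteq\Pos(X^{[H]})$ by Lemma~\ref{thm:NegX_PosXH}, $X^{[H]}v=v$, so $\Phi v = X^{[H]}Xv = -v$, i.e. $v\in\Neg(\Phi)$; symmetrically (using Proposition~\ref{thm:xh_is_neutral_involutory} and Lemma~\ref{thm:NegX_PosXH} applied to $X^{[H]}$) $\Neg(X^{[H]})\subseteq\Neg(\Phi)$. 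Therefore $\Neg(X)\dot{+}\Neg(X^{[H]})\subseteq\Neg(\Phi)$. For equality I would compare dimensions: $\dim\Neg(X)=\dim\Neg(X^{[H]})=m$ by Proposition~\ref{thm:xh_is_neutral_involutory}, while $\dim\Neg(\Phi)=2m$ by Proposition~\ref{thm:Sigma_2m}; since the sum is direct it has dimension $2m=\dim\Neg(\Phi)$, forcing equality. Alternatively, equality follows from (a) by taking orthogonal complements together with Lemma~\ref{thm:pos_neg}, but the dimension count is cleaner.

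The only mildly delicate point is the reverse inclusion in (a): one must justify passing to a simultaneous eigendecomposition of $X$ and $X^{[H]}$, which is legitimate because both are diagonalizable (being involutory) and they commute (from $X^{[H]}X=XX^{[H]}$). Everything else is bookkeeping with the already-established lemmas, so I expect no real obstacle; the main care is simply to invoke the commuting-diagonalizable pair correctly and to keep the two directions of each inclusion straight.
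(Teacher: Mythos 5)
Your proposal is correct and follows essentially the same route as the paper: the same inclusions, the same supporting results (Lemmas~\ref{thm:NegX_PosXH} and~\ref{thm:NegX_NegXH}, Propositions~\ref{thm:xh_is_neutral_involutory} and~\ref{thm:Sigma_2m}), and the same dimension count in part (b). The only variation is the reverse inclusion in (a), where you pass to a simultaneous eigendecomposition of the commuting diagonalizable pair $X$, $X^{[H]}$, whereas the paper argues directly by multiplying $XX^{[H]}v=v$ on the left by $X$ and then using $\Phi=X^{[H]}+X-I_n$ to get $Xv=v$; both arguments are valid.
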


\begin{proof} 
(a) Let $v \in \Pos(\Phi)$, then $\Phi v=v$, then $XX^{[H]} v=v$.  Multiplying both sides by $X$ on the left, and using 
$X^2=I_n$,  one has $X^{[H]}v=Xv$.  We now show that $Xv=v$.   Using Theorem~\ref{thm:X_XH}(c), calculate 
$v=XX^{[H]} v=X^{[H]}v+Xv-v=2Xv-v$,  from which $2Xv=2v$, $Xv=v$,  i.e.,  $v\in \Pos(X)$.   From $X^{[H]}v=Xv$, one has   $v\in \Pos(X^{[H]})$. Therefore,  $\Pos(\Phi) \subseteq \Pos(X) \cap \Pos(X^{[H]}) $.   On the other hand,  let $v\in \Pos(X) \cap \Pos(X^{[H]})$, then $Xv=v$ and $X^{[H]}v=v$.  Using Theorem~\ref{thm:X_XH}(c), calculate
$\Phi v=X^{[H]}Xv=(X^{[H]}+X-I_n)v=X^{[H]}v+Xv-v=v.$
That is $v \in \Pos (\Phi)$. Therefore,  $\Pos(X) \cap \Pos(X^{[H]}) \subseteq \Pos (\Phi)$. Combining the two results, one has  $\Pos (\Phi)=\Pos (X) \cap \Pos (X^{[H]})$.

(b) Assume $X$ has neutral index $m$, i.e., $\dim (\Neg X)=m$.  Let $v\in \Neg(X)$,  by Lemma~\ref{thm:NegX_PosXH},  $v\in \Pos(X^{[H]})$. Then  $\Phi v=X^{[H]}Xv=-X^{[H]}v=-v$,
i.e., $v\in \Neg(\Phi)$. Therefore $\Neg(X)\subseteq \Neg(\Phi)$.  Replacing  $X$ with $X^{[H]}$, one finds $\Neg(X^{[H]})\subseteq \Neg(\Phi)$.  Combining the two results, one has $ \Neg(X) + \Neg(X^{[H]})\subseteq \Neg(\Phi)$.  Since, by Lemma~\ref{thm:NegX_NegXH},  $\Neg(X) \cap \Neg(X^{[H]})=\{0\}$, then $\Neg(X) + \Neg(X^{[H]})=\Neg(X)\dot{+}\Neg(X^{[H]})$.  Since,  by Proposition~\ref{thm:xh_is_neutral_involutory},  $\dim (\Neg X^{[H]})=\dim (\Neg X)$,  then $\dim (\Neg X\dot{+}\Neg X^{[H]})=\dim (\Neg X) +\dim (\Neg X^{[H]})=2m$.  Since by Proposition~\ref{thm:Sigma_2m},  $\dim (\Neg \Phi)=2m$,  it follows $\Neg(\Phi)=\Neg(X)\dot{+}\Neg(X^{[H]})$.
\end{proof}

\begin{lem}
\label{thm:perp_subspace}
Let $H \in \mathbb{K}^{n\times n}$ be a nonsingular selfadjoint matrix, let $X \in \mathbb{K}^{n\times n}$ be an $H$-normal $H$-neutral involutory matrix, and let $\Phi=X^{[H]}X=XX^{[H]}$.  Then,
\\\indent(a) $\Neg(X) \perp \Pos(\Phi) $,
\\\indent(b)  $\Neg(X^{[H]})\perp \Pos(\Phi)$.
\end{lem}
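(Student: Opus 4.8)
The plan is to derive both parts of the lemma directly from Lemma~\ref{thm:XXS_subspace}(a) together with Lemma~\ref{thm:pos_neg}, rather than manipulating the identity $\Phi=X^{[H]}X$ elementwise. The crucial observation is that $\Pos(\Phi)=\Pos(X)\cap\Pos(X^{[H]})$ is contained \emph{both} in $\Pos(X)$ and in $\Pos(X^{[H]})$, so it suffices to know the two orthogonality relations $\Neg(X)\perp\Pos(X^{[H]})$ and $\Neg(X^{[H]})\perp\Pos(X)$; each of these is an immediate restatement of one line of Lemma~\ref{thm:pos_neg}.

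For part (a) I would argue as follows. By Lemma~\ref{thm:pos_neg}(b), $\Neg(X)=(\Pos X^{[H]})^\perp$, so every vector of $\Neg(X)$ is $H$-orthogonal to every vector of $\Pos(X^{[H]})$; that is, $\Neg(X)\perp\Pos(X^{[H]})$. Since Lemma~\ref{thm:XXS_subspace}(a) gives $\Pos(\Phi)=\Pos(X)\cap\Pos(X^{[H]})\subseteq\Pos(X^{[H]})$, the inclusion forces $\Neg(X)\perp\Pos(\Phi)$. (Equivalently, and just as quickly: if $u\in\Neg(X)$ and $v\in\Pos(\Phi)\subseteq\Pos(X^{[H]})$, then $Xu=-u$ and $X^{[H]}v=v$, whence $-[u,v]_H=[Xu,v]_H=[u,X^{[H]}v]_H=[u,v]_H$, so $[u,v]_H=0$.)

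For part (b) the argument is the mirror image: by Lemma~\ref{thm:pos_neg}(d), $\Neg(X^{[H]})=(\Pos X)^\perp$, hence $\Neg(X^{[H]})\perp\Pos(X)$, and since $\Pos(\Phi)\subseteq\Pos(X)$ by Lemma~\ref{thm:XXS_subspace}(a), we conclude $\Neg(X^{[H]})\perp\Pos(\Phi)$.

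I do not expect any real obstacle here: all the needed structure ($H$-normality, $H$-neutrality, involutoriness, and the decomposition of $\Pos(\Phi)$) has already been assembled in the preceding lemmas. The only points to be careful about are invoking the correct item of Lemma~\ref{thm:pos_neg} (part (b) for statement (a), part (d) for statement (b)) and recording the inclusion $\Pos(\Phi)\subseteq\Pos(X)\cap\Pos(X^{[H]})$ before passing to orthogonal complements.
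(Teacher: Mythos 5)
Your proof is correct, but it runs along a different line than the paper's. The paper's argument goes through the \emph{negative} eigenspace of $\Phi$: since $\Phi$ is $H$-selfadjoint, $\Neg(\Phi)\perp\Pos(\Phi)$, and Lemma~\ref{thm:XXS_subspace}(b) gives $\Neg(X)\subseteq\Neg(\Phi)$ and $\Neg(X^{[H]})\subseteq\Neg(\Phi)$, from which both claims follow at once. You instead go through the \emph{positive} eigenspace: Lemma~\ref{thm:XXS_subspace}(a) gives $\Pos(\Phi)\subseteq\Pos(X^{[H]})$ and $\Pos(\Phi)\subseteq\Pos(X)$, and then Lemma~\ref{thm:pos_neg}(b),(d) (or your one-line computation $-[u,v]_H=[Xu,v]_H=[u,X^{[H]}v]_H=[u,v]_H$) supplies the orthogonality $\Neg(X)\perp\Pos(X^{[H]})$ and $\Neg(X^{[H]})\perp\Pos(X)$. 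Both routes are short and both lean on the preceding lemmas, but yours is slightly more economical in its dependencies: part (a) of Lemma~\ref{thm:XXS_subspace} is a direct computation (indeed only the inclusion $\Pos(\Phi)\subseteq\Pos(X)\cap\Pos(X^{[H]})$ is needed), whereas part (b), which the paper invokes, rests on the dimension count via Proposition~\ref{thm:Sigma_2m} and Lemma~\ref{thm:NegX_NegXH}. The paper's version, on the other hand, packages both statements into a single containment in $\Neg(\Phi)$ and uses only the generic fact that eigenspaces of an $H$-selfadjoint involution to distinct real eigenvalues are $H$-orthogonal. Either argument is acceptable; just make sure, as you did, to cite Lemma~\ref{thm:pos_neg}(b) for claim (a) and Lemma~\ref{thm:pos_neg}(d) for claim (b).
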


\begin{proof}
$\Phi$ is $H$-selfadjoint, so $\Neg(\Phi) \perp \Pos(\Phi)$.   By Lemma~\ref{thm:XXS_subspace}(b), $\Neg(X) \subseteq \Neg(\Phi)$ and $\Neg(X^{[H]}) \subseteq \Neg(\Phi)$ . Thus $\Neg(X) \perp \Pos(\Phi)$ and $\Neg(X^{[H]})\perp \Pos(\Phi)$ .
\end{proof}

\begin{thm}
\label{thm:XH_JK} 
Let $H \in \mathbb{K}^{n\times n}$ be a nonsingular selfadjoint matrix and $X \in \mathbb{K}^{n\times n}$ be an $H$-normal $H$-neutral involutory matrix. Then the pair  $(X,H)$ is unitarily similar to a canonical pair $(J,  K)$ through an invertible  transformation $Q\in \mathbb{K}^{n\times n}$,  i.e.,
\begin{align}
\label{eq:XH_JK}
Q^{-1}XQ=J \quad  \text{and} \quad Q^{\dagger}H Q=K, 
\end{align}
where $J \in \mathbb{K}^{n\times n}$ is the Jordan form of $X$, 
\begin{align}
\label{eq:JK1}
J=\begin{pmatrix}
-I_m &&&\\ &I_m &&\\&&I_{p-m}&\\&&&I_{q-m}
\end{pmatrix},
\end{align}
and $K\in \mathbb{K}^{n\times n}$ is the nonsingular selfadjoint matrix
\begin{align}
\label{eq:JK2}
K=\begin{pmatrix}
&Z_m&&\\Z_m &&&\\&&I_{p-m}&\\&&&-I_{q-m}
\end{pmatrix}.
\end{align}
Here $m$ is the neutral index of $X$ and $(p,q)$ is the inertia of $H$.
\end{thm}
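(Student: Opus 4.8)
The plan is to construct the transformation matrix $Q$ block by block, using the direct-sum decomposition of $\mathbb{K}^n$ induced by the involution $X$ and its $H$-adjoint. By Lemma~\ref{thm:XXS_subspace}, with $\Phi = X^{[H]}X = XX^{[H]}$, we have $\mathbb{K}^n = \Pos(\Phi) \dot{+} \Neg(\Phi)$ with $\Pos(\Phi) = \Pos(X)\cap\Pos(X^{[H]})$ and $\Neg(\Phi) = \Neg(X)\dot{+}\Neg(X^{[H]})$; moreover, by Proposition~\ref{thm:Sigma_2m}, $\Neg(\Phi)$ is a hyperbolic subspace of dimension $2m$ and $\Pos(\Phi)$ is its $H$-orthogonal complement (this uses $H$-selfadjointness of $\Phi$ and Lemma~\ref{thm:perp_subspace}). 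So I would split the problem: first handle $\Pos(\Phi)$, then handle $\Neg(\Phi)$, then assemble.

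On $\Pos(\Phi)$, the restriction of $X$ is the identity (since $\Pos(\Phi)\subseteq\Pos(X)$), and $H$ restricted to this nondegenerate subspace is a nonsingular selfadjoint form of some inertia $(p',q')$; choosing an $H$-orthonormal basis (Sylvester / the canonical form of $(I,H)$) gives a block $I_{p'+q'}$ for $X$ and $I_{p'}\oplus(-I_{q'})$ for $H$. On $\Neg(\Phi)$, which is hyperbolic of dimension $2m$, I would use the fact that $\Neg(X)$ and $\Neg(X^{[H]})$ are each $m$-dimensional (Proposition~\ref{thm:xh_is_neutral_involutory}), are each $H$-neutral — the first by Definition~\ref{defn:neutral_involution}, the second because $X^{[H]}$ is $H$-neutral involutory by Theorem~\ref{thm:X_XH} — and intersect trivially (Lemma~\ref{thm:NegX_NegXH}). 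Thus $\Neg(\Phi)$ is the direct sum of two complementary neutral subspaces of equal dimension, which is exactly a hyperbolic subspace presented in the form $(u_1,\dots,u_m,v_1,\dots,v_m)$ discussed in the introduction: one can choose bases $(u_i)$ of $\Neg(X)$ and $(v_i)$ of $\Neg(X^{[H]})$ with $[u_i,u_j]_H=[v_i,v_j]_H=0$ and $[u_i,v_j]_H=\delta_{ij}$ (nondegeneracy of $H$ on $\Neg(\Phi)$ forces the pairing between the two neutral pieces to be a nonsingular $m\times m$ matrix, which can be normalized to the identity by changing the basis of one piece). In this ordered basis $X$ acts as $-I_m$ on the $u$'s and as $+I_m$ on the $v$'s — because $v\in\Neg(X^{[H]})\subseteq\Pos(X)$ by Lemma~\ref{thm:pos_neg}(d) applied with $X$ replaced by $X^{[H]}$, equivalently $\Neg(X^{[H]})=(\Pos X)^\perp$ needs care; more directly, $v\in\Neg(\Phi)$ and $v\notin\Neg(X)$ together with $\Neg(\Phi)=\Neg(X)\dot+\Neg(X^{[H]})$ and the action of $\Phi$ pin down $Xv=v$ — so the Jordan block is $-I_m\oplus I_m$ and the Gram matrix is $\left(\begin{smallmatrix}0&Z_m\\Z_m&0\end{smallmatrix}\right)$ once the $u_i$ and $v_i$ are ordered compatibly with $Z_m$.

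Collecting the two parts, the transformation $Q$ whose columns are $u_1,\dots,u_m,v_1,\dots,v_m$ followed by the $H$-orthonormal basis of $\Pos(\Phi)$ satisfies $Q^{-1}XQ = J$ and $Q^\dagger H Q = K$ with $J,K$ as in~(\ref{eq:JK1}) and~(\ref{eq:JK2}), provided $p' = p-m$ and $q' = q-m$. This last identity is forced by counting inertia: $H$ has inertia $(p,q)$, the hyperbolic block $\Neg(\Phi)$ contributes $(m,m)$, and inertia is additive over $H$-orthogonal direct sums, so $\Pos(\Phi)$ must have inertia $(p-m,q-m)$. (Implicitly this also shows $m\le\min(p,q)$.) The main obstacle, and the step deserving the most care, is verifying that $X$ acts as $+I_m$ on the chosen complementary neutral subspace $\Neg(X^{[H]})$ and that the cross-pairing between $\Neg(X)$ and $\Neg(X^{[H]})$ is nondegenerate so that it can be normalized to $Z_m$ — in other words, that $\Neg(X)\dot+\Neg(X^{[H]})$ really is a hyperbolic space in standard form and not merely signature-zero; everything else is bookkeeping with the projections $(I_n\pm X)/2$ and Sylvester's law of inertia.
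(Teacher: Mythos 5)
Your proposal is correct and follows essentially the same route as the paper: decompose $\mathbb{K}^n$ into $\Neg(X)$, $\Neg(X^{[H]})$ and $\Pos(\Phi)$ using Lemmas~\ref{thm:XXS_subspace} and~\ref{thm:perp_subspace}, exploit neutrality of the two negative eigenspaces and nondegeneracy of the hyperbolic block to normalize the cross-pairing, and fix the inertia of the $\Pos(\Phi)$ block by Sylvester counting. The paper packages the same construction as an explicit product of transformations $E_1E_2E_3$ (basis assembly, normalization of the pairing matrix $B$ and of the block $C$, then the $Z_m$ reordering), while you phrase it as choosing a Witt-type basis directly; the step you flag as delicate, that $X$ acts as $+1$ on $\Neg(X^{[H]})$, is handled correctly by your direct computation with $\Phi=XX^{[H]}$ (equivalently by Lemma~\ref{thm:NegX_PosXH} applied to $X^{[H]}$).
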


\begin{proof}Since $X$ is $H$-neutral,  the negative eigenspace $\Neg(X)$ is neutral.  By the definition of neutral index, the dimension of $\Neg(X)$ is $m$. Let  $v_1, \hdots, v_m$ be a basis of $\Neg (X)$, then $[v_i,v_j]_H=0 \, (i,j=1,\hdots, m)$. 
By Proposition~\ref{thm:xh_is_neutral_involutory}, $X^{[H]}$ is $H$-neutral.  The negative eigenspace $\Neg(X^{[H]})$ is neutral and the dimension of  $\Neg(X^{[H]})$ is $m$. Let $u_1,\hdots, u_m$ be a basis of $\Neg(X^{[H]})$,  then $[u_i,u_j]_H=0 \,(i,j=1,\hdots, m)$.  
By Proposition~\ref{thm:Sigma_2m},  the dimension of the negative eigenspace $\Neg(\Phi)$, where $\Phi=X^{[H]}X$, is $2m$. Thus the dimension of  the positive eigenspace $\Pos(\Phi)$ is $n-2m$.  Let $w_1,\hdots, w_{n-2m}$ be a basis of $\Pos(\Phi)$. By Lemma~\ref{thm:XXS_subspace}(a),  $w_1,\hdots, w_{n-2m} \in \Pos(X)$.   By Lemma~\ref{thm:perp_subspace}, $\Neg(X)\perp \Pos(\Phi)$ and $\Neg(X^{[H]})\perp \Pos(\Phi)$, i.e., $[v_i, w_j]_H=0 \,(i=1,\hdots, m; j=1,\hdots,n-2m)$ and  $[u_i, w_j]_H=0\, (i=1,\hdots, m; j=1,\hdots,n-2m)$.

Let $E_1$ be the $n\times n$ matrix with columns equal to the components of the basis vectors $v_1, \hdots v_m, u_1,\hdots, u_m, w_1,\hdots, w_{n-2m}$, in this order. In this basis, the matrices $X$ and $H$ assume the form
\begin{align}
E_1^{-1} X E_1 =
\begin{pmatrix}
-I_m &&\\ &I_m &\\&&I_{n-2m}
\end{pmatrix}
\quad \text{and} \quad
E_1^\dagger H E_1 =
\begin{pmatrix}
&B&\\B^\dagger &&\\&&C
\end{pmatrix}
,
\end{align}
where $B \in \mathbb{K}^{m\times m}$ and $C\in\mathbb{K}^{(n-2m)\times(n-2m)}$ are the matrices with elements
\begin{align}
B_{ij} = v_i^\dagger H u_j \quad (i=1,\hdots,m; j=1,\hdots,m),
\end{align}
and
\begin{align}
C_{ij} = w_i^\dagger H w_j \quad (i=1,\hdots,{n-2m}; j=1,\hdots,{n-2m}).
\end{align}

Since $H$ is nonsingular, also $B$  and $C$ are nonsingular. Moreover, $C$ is selfadjoint, and since $\Neg(\Phi) = \mathop{\rm span}(v_1, \hdots v_m, u_1,\hdots, u_m)$ is a hyperbolic subspace, the signature of $C$ is equal to the signature of $H$, i.e.,  $p-q$. Thus $C$ can be diagonalized as (recall that $C$ has dimension $n-2m$ and that $p+q=n$)
\begin{align}
C= D^\dagger \, \eta_{p-m,q-m} \, D ,
\end{align}
where $\eta_{p-m,q-m} \in \mathbb{K}^{(n-2m)\times(n-2m)}$ is the diagonal matrix
\begin{align}
\eta_{p-m,q-m} =  \begin{pmatrix} I_{p-m} & \\ & -I_{q-m} \end{pmatrix},
\end{align}
and $D \in \mathbb{K}^{(n-2m)\times(n-2m)}$ is a nonsingular $(n-2m)\times(n-2m)$ matrix.
Let
\begin{align}
E_2 =
\begin{pmatrix}
I_m&&\\ &B^{-1}&\\&&D^{-1}
\end{pmatrix},
\end{align}
and let $E'=E_1 E_2$.
Then, by direct computation,
\begin{align}
E^{\prime-1} X E' & =
\begin{pmatrix}
-I_m &&\\ &I_m &\\&&I_{n-2m}
\end{pmatrix},
\end{align}
and
\begin{align}
E^{\prime\dagger} H E' & =
\begin{pmatrix}
&I_m&\\I_m &&\\&&\eta_{p-m,q-m}
\end{pmatrix}
.
\end{align}
Let
\begin{align}
E_3 =
\begin{pmatrix}
I_m&&\\ &Z_m&\\&&I_{n-2m}
\end{pmatrix},
\end{align}
and let $Q=E' E_3=E_1 E_2 E_3$.
Then,
\begin{align}
Q^{-1} X Q & =
\begin{pmatrix}
-I_m &&\\ &I_m &\\&&I_{n-2m}
\end{pmatrix}
= J,
\end{align}
and
\begin{align}
Q^{\dagger} H Q & =
\begin{pmatrix}
&Z_m&\\Z_m &&\\&&\eta_{p-m,q-m}
\end{pmatrix}
= K,
\end{align}
where $J$ and $K$ are the matrices in the statement of the theorem.
\end{proof}

\begin{cor}
\label{thm:XH_PM} 
Let  $H \in \mathbb{K}^{n\times n}$ be a nonsingular selfadjoint matrix and $X \in \mathbb{K}^{n\times n}$ be an $H$-normal $H$-neutral involutory matrix.  Then the  pair $(X,H)$ is unitarily similar to a canonical pair $(P,M)$ through an invertible  transformation $Q'\in \mathbb{K}^{n\times n}$,  i.e.,
\begin{align}
\label{eq:XH_PM}
Q^{\prime -1} X Q^\prime =P  \quad  \text{and} \quad Q^{\prime \dagger} H Q^{\prime}=M, 
\end{align}
where $P \in \mathbb{K}^{n\times n}$ is the $M$-normal $M$-neutral involutory matrix 
\begin{align}
\label{eq:PM1}
P=\begin{pmatrix}
&Z_m&&\\Z_m &&&\\&&I_{p-m}&\\&&&I_{q-m}
\end{pmatrix},
\end{align}
and $M\in \mathbb{K}^{n\times n}$ is the nonsingular selfadjoint matrix
\begin{align}
\label{eq:PM2}
M=\begin{pmatrix}
-I_m &&&\\ &I_m &&\\&&I_{p-m}&\\&&&-I_{q-m}
\end{pmatrix}.
\end{align}
Here $m$ is the neutral index of $X$ and $(p,q)$ is the inertia of $H$.
\end{cor}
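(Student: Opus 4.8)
The plan is to build on Theorem~\ref{thm:XH_JK}. Let $Q$ be the transformation it supplies, so that $Q^{-1}XQ=J$ and $Q^{\dagger}HQ=K$ with $J,K$ as in~(\ref{eq:JK1})--(\ref{eq:JK2}). Comparing $(J,K)$ with the target pair $(P,M)$ of~(\ref{eq:PM1})--(\ref{eq:PM2}), the block pairs of sizes $p-m$ and $q-m$ already agree in $(J,K)$ and $(P,M)$, so it remains only to modify the leading $2m\times 2m$ corner: there one must pass from
\[
(J_0,\,K_0),\qquad J_0:=\begin{pmatrix}-I_m&\\&I_m\end{pmatrix},\quad K_0:=\begin{pmatrix}&Z_m\\Z_m&\end{pmatrix},
\]
to $(K_0,J_0)$ --- that is, on that corner the operator block and the Gram block get swapped. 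So I will look for an invertible $T_0\in\mathbb{K}^{2m\times 2m}$ with $T_0^{-1}J_0T_0=K_0$ and $T_0^{\dagger}K_0T_0=J_0$, then set $T=T_0\oplus I_{p-m}\oplus I_{q-m}$ and $Q'=QT$.

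The point is that $J_0$ and $K_0$ are a pair of anticommuting selfadjoint involutions: $J_0^2=K_0^2=I_{2m}$, $J_0^{\dagger}=J_0$, $K_0^{\dagger}=K_0$ (since $Z_m$ is real symmetric with $Z_m^2=I_m$), and $J_0K_0=-K_0J_0$. Taking
\[
T_0:=\tfrac1{\sqrt2}\,(J_0+K_0)=\tfrac1{\sqrt2}\begin{pmatrix}-I_m&Z_m\\Z_m&I_m\end{pmatrix}\in\mathbb{K}^{2m\times 2m},
\]
a one-line computation with these relations shows that $T_0^{\dagger}=T_0$; that $T_0^2=\tfrac12(J_0^2+J_0K_0+K_0J_0+K_0^2)=I_{2m}$, so $T_0$ is invertible with $T_0^{-1}=T_0$; that $J_0T_0=\tfrac1{\sqrt2}(I_{2m}+J_0K_0)=T_0K_0$, i.e.\ $T_0^{-1}J_0T_0=K_0$; and that $T_0^{\dagger}K_0T_0=T_0K_0T_0=\tfrac12(J_0K_0+I_{2m})(J_0+K_0)=\tfrac12(J_0K_0J_0+2J_0+K_0)=J_0$, using $K_0^2=I_{2m}$ and $J_0K_0J_0=-K_0$. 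Since $T_0$ is real, these identities hold equally for $\mathbb{K}=\mathbb{R}$ and $\mathbb{K}=\mathbb{C}$.

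With $T=T_0\oplus I_{p-m}\oplus I_{q-m}\in\mathbb{K}^{n\times n}$ and $Q'=QT$, a block computation using the identities above gives $T^{-1}JT=P$ and $T^{\dagger}KT=M$, whence $(Q')^{-1}XQ'=T^{-1}JT=P$ and $(Q')^{\dagger}HQ'=T^{\dagger}KT=M$; thus $(X,H)$ is unitarily similar to $(P,M)$ through $Q'$. That $P$ is $M$-normal $M$-neutral involutory is then automatic: from $(Q')^{-1}XQ'=P$ and $(Q')^{\dagger}HQ'=M$ one has $P^{[M]}=(Q')^{-1}X^{[H]}Q'$, so $P^2=(Q')^{-1}X^2Q'=I_n$, $P^{[M]}P-PP^{[M]}=(Q')^{-1}(X^{[H]}X-XX^{[H]})Q'=0$, and $\Neg(P)=(Q')^{-1}\Neg(X)$ is $M$-neutral because $[(Q')^{-1}u,(Q')^{-1}v]_M=[u,v]_H$ for $u,v\in\Neg(X)$; alternatively one checks it directly from the block forms of $P$ and $M$ via Theorem~\ref{thm:X_XH}(c). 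The only step requiring any ingenuity is finding $T_0$; formulating the corner problem as the requirement that $T_0$ simultaneously conjugate $J_0$ to $K_0$ and pull $K_0$ back to $J_0$, and searching within the four-dimensional algebra generated by the anticommuting involutions $J_0$ and $K_0$, makes $\tfrac1{\sqrt2}(J_0+K_0)$ the obvious candidate.
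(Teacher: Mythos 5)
Your proof is correct and takes essentially the same route as the paper: your $T=T_0\oplus I_{p-m}\oplus I_{q-m}$ with $T_0=\tfrac1{\sqrt2}(J_0+K_0)$ is exactly the paper's transition matrix $E$, and $Q'=QT$ is the paper's choice $Q'=QE$. The only difference is presentational --- the paper simply exhibits $E$ and states $E^{-1}JE=P$, $E^\dagger KE=M$, while you derive $T_0$ from the anticommuting-involution structure and explicitly verify that $P$ is $M$-normal $M$-neutral involutory, which is a harmless elaboration.
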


\begin{proof}
Let 
\begin{align}
E=\begin{pmatrix}
-I_m/\sqrt{2} &\,\,\,\,Z_m/\sqrt{2} &&\\
Z_m/\sqrt{2}&\,\,\,\,I_m/\sqrt{2} &&\\
&&I_{p-m}&\\
&&&I_{q-m}
\end{pmatrix}.
\end{align} 
Then, for $J$ and $K$ as in Theorem~\ref{thm:XH_JK},
\begin{align}
E^{-1}JE=P  \quad \text{and} \quad E^\dagger K E=M.
\end{align}
The choice $Q^\prime=QE$, with $Q$  as in Theorem~\ref{thm:XH_JK}, proves the corollary.
\end{proof}

The forms of $X^{[H]}$ and $\Phi=X^{[H]} X$ in the same basis as in the canonical forms of $(X,H)$ in Theorem~\ref{thm:XH_JK} and Corollary~\ref{thm:XH_PM} respectively  are
\begin{align}
Q^{-1}X^{[H]}Q = J^{[K]} = & \begin{pmatrix}
I_m &&&\\ &-I_m &&\\&&I_{p-m}&\\&&&I_{q-m}
\end{pmatrix},
\\
Q^{\prime -1}X^{[H]} Q' = P^{[M]} = & \begin{pmatrix}
&-Z_m&&\\-Z_m &&&\\&&I_{p-m}&\\&&&I_{q-m}
\end{pmatrix},
\\
Q^{-1}X^{[H]} X Q = Q^{\prime -1}X^{[H]} X Q' = &\begin{pmatrix}
-I_m&&&\\&-I_m &&\\&&I_{p-m}&\\&&&I_{q-m}
\end{pmatrix}.
\end{align}

The first $2m$ rows and columns of the canonical forms $(J,K)$ and $(P,M)$ in Theorem~\ref{thm:XH_JK} and Corollary~\ref{thm:XH_PM} correspond to the negative subspace of $X^{[H]} X$. By decomposing it into $m$ hyperbolic planes, it is easy to see that alternative canonical forms of the pair $(X,H)$ are the pairs $(J',K')$ and $(P',M')$ where
\begin{align}
J'=&\underbrace{\begin{pmatrix}-1&\\&1
\end{pmatrix} \oplus \hdots \oplus \begin{pmatrix}-1&\\&1
\end{pmatrix}}_{m~\text{terms}} \oplus \begin{pmatrix} I_{p-m}&\\&I_{q-m}
\end{pmatrix},   \label{eq:J=QXQ} \\
K'=&\underbrace{\begin{pmatrix}&1\\1&
\end{pmatrix} \oplus \hdots \oplus \begin{pmatrix}&1\\1&
\end{pmatrix}}_{m~\text{terms}} \oplus  \begin{pmatrix} I_{p-m}&\\&-I_{q-m}
\end{pmatrix},   \label{eq:K=QHQ}   
\end{align}
and
\begin{align}
P'=&\underbrace{\begin{pmatrix}&1\\1&
\end{pmatrix} \oplus \hdots \oplus \begin{pmatrix}&1\\1&
\end{pmatrix}}_{m~\text{terms}} \oplus  \begin{pmatrix} I_{p-m}&\\&I_{q-m}
\end{pmatrix},  \label{eq:P=QXQ} \\
M'=&\underbrace{\begin{pmatrix}-1&\\&1
\end{pmatrix} \oplus \hdots \oplus \begin{pmatrix}-1&\\&1
\end{pmatrix}}_{m~\text{terms}} \oplus  \begin{pmatrix} I_{p-m}&\\&-I_{q-m}
\end{pmatrix}.   \label{eq:M=QHQ} 
\end{align}

\begin{pro}
\label{thm:lemma2}
Let $H \in \mathbb{K}^{n\times n}$ be a nonsingular selfadjoint matrix.  Let $X_1$ and $X_2$ be two $H$-normal $H$-neutral involutory  matrices.  Then  $X_1$ and $X_2$ have  the same neutral index if and only if they are  $H$-unitarily similar. 
\end{pro}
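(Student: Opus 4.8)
The plan is to reduce everything to the explicit canonical form established in Theorem~\ref{thm:XH_JK}. That theorem shows that for any $H$-normal $H$-neutral involutory matrix $X$ of neutral index $m$, the pair $(X,H)$ is unitarily similar (in the sense of Definition~\ref{defn:u_similar}) to the pair $(J,K)$ of~(\ref{eq:JK1})--(\ref{eq:JK2}), and this pair depends only on $m$ and on the inertia $(p,q)$ of $H$. Since $H$ is fixed, two such matrices with the same neutral index are reduced to \emph{the same} canonical pair, and the proposition will follow by composing the two reductions.

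For the ``only if'' direction, I would argue as follows. If $X_1$ and $X_2$ are $H$-unitarily similar, then $X_2 = L^{-1}X_1 L$ for some $H$-unitary $L$, so in particular $X_1$ and $X_2$ are similar in the usual sense and therefore have the same eigenvalues with the same multiplicities. The neutral index is the multiplicity of the eigenvalue $-1$ (equivalently $\dim\Neg(X_i)$), so $X_1$ and $X_2$ have the same neutral index.

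For the ``if'' direction, suppose $X_1$ and $X_2$ both have neutral index $m$. By Theorem~\ref{thm:XH_JK} there exist invertible $Q_1,Q_2\in\mathbb{K}^{n\times n}$ with $Q_i^{-1}X_iQ_i=J$ and $Q_i^{\dagger}HQ_i=K$ for $i=1,2$, where $(J,K)$ is the common canonical pair determined by $m$ and the inertia of $H$. Put $L=Q_1Q_2^{-1}$. Eliminating $J$ gives $L^{-1}X_1L=Q_2(Q_1^{-1}X_1Q_1)Q_2^{-1}=Q_2JQ_2^{-1}=X_2$, and eliminating $K$ gives $L^{\dagger}HL=(Q_2^{-1})^{\dagger}(Q_1^{\dagger}HQ_1)Q_2^{-1}=(Q_2^{\dagger})^{-1}KQ_2^{-1}=H$, the last equality because $Q_2^{\dagger}HQ_2=K$. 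Hence $L$ is $H$-unitary and $X_2=L^{-1}X_1L$; since the inverse of an $H$-unitary matrix is $H$-unitary, $X_1$ and $X_2$ are $H$-unitarily similar.

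I do not expect any genuine obstacle: the only point requiring care is the observation that Theorem~\ref{thm:XH_JK} yields a canonical pair that is literally the same matrix pair for $X_1$ and $X_2$ (not merely two unitarily similar pairs), which is immediate from the explicit formulas~(\ref{eq:JK1})--(\ref{eq:JK2}) together with the fact that the neutral index $m$ and the inertia $(p,q)$ of $H$ agree. The rest is routine manipulation of the transformation matrices.
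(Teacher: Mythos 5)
Your proof is correct and follows essentially the same route as the paper: the easy direction via preservation of eigenvalue multiplicities under similarity, and the converse by reducing both pairs $(X_1,H)$ and $(X_2,H)$ to the common canonical pair $(J,K)$ of Theorem~\ref{thm:XH_JK} and conjugating by $L=Q_1Q_2^{-1}$, which you correctly verify to be $H$-unitary. The only (harmless) slip is that your labels ``if'' and ``only if'' are swapped relative to the statement as phrased; the mathematics of both directions is right.
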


\begin{proof}
Sufficiency:  Since $X_1$ and $X_2$ are $H$-unitarily similar, they have the same number of negative eigenvalues, thus $X_1$ and $X_2$ have  the same neutral index.

Necessity:  Let $X_1$ and $X_2$ be two $H$-normal $H$-neutral involutory matrices having neutral index $m$.   The pairs $(X_1, H)$ and $(X_2, H)$ can be put into the same  canonical form $(J,K)$ in Theorem~\ref{thm:XH_JK} through some invertible transformations $Q_1$ and $Q_2$ respectively.  Then,
\begin{align} 
Q_1^{-1} X_1Q_1=Q_2^{-1} X_2 Q_2 \quad \text{and} \quad Q_1^\dagger HQ_1=Q_2^\dagger HQ_2.
\end{align}
So $X_1$ and $X_2$ are $H$-unitarily similar through the matrix  $Q_1Q_2^{-1}$. 
\end{proof}

\begin{cor}   
Let $H \in \mathbb{K}^{n\times n}$ be a nonsingular selfadjoint matrix.  If $X \in \mathbb{K}^{n\times n}$ is $H$-normal $H$-neutral involutory,  then $X$ and $X^{[H]}$ are $H$-unitarily similar. 
\end{cor}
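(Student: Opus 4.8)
The plan is to combine Proposition~\ref{thm:xh_is_neutral_involutory} with Proposition~\ref{thm:lemma2}. The first of these tells us that if $X$ is $H$-normal $H$-neutral involutory, then so is $X^{[H]}$, and moreover $X$ and $X^{[H]}$ have the same neutral index $m$. The second tells us that two $H$-normal $H$-neutral involutory matrices are $H$-unitarily similar precisely when they share the same neutral index. Since both hypotheses of the necessity direction of Proposition~\ref{thm:lemma2} are met by the pair $X$, $X^{[H]}$ (both are $H$-normal $H$-neutral involutory, both have neutral index $m$), we conclude immediately that $X$ and $X^{[H]}$ are $H$-unitarily similar.

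Concretely, the one-line argument is: by Proposition~\ref{thm:xh_is_neutral_involutory}, $X^{[H]}$ is $H$-normal $H$-neutral involutory with the same neutral index as $X$; by Proposition~\ref{thm:lemma2} (necessity), equality of neutral indices forces $X$ and $X^{[H]}$ to be $H$-unitarily similar. There is essentially no obstacle here: the corollary is a direct specialization of the general criterion established in Proposition~\ref{thm:lemma2}, with the auxiliary fact (that $X^{[H]}$ inherits the structure and the index) already recorded in Proposition~\ref{thm:xh_is_neutral_involutory}. The only thing worth double-checking is that Proposition~\ref{thm:xh_is_neutral_involutory} genuinely delivers both needed facts — membership in the class and equality of neutral index — which it does (the latter via the similarity of $X^{[H]}$ to $X^\dagger$ and the reality of the eigenvalues of $X$).

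Alternatively, one could give a self-contained proof that bypasses Proposition~\ref{thm:lemma2} by invoking Theorem~\ref{thm:XH_JK} directly: both pairs $(X,H)$ and $(X^{[H]},H)$ reduce to the same canonical pair $(J,K)$ — indeed the displayed form of $Q^{-1}X^{[H]}Q$ after Corollary~\ref{thm:XH_PM} shows $X^{[H]}$ has the same $J$ up to relabeling the first $2m$ coordinates — so composing the two reducing transformations produces an $H$-unitary matrix conjugating one into the other. But the two-proposition route is cleaner, so that is the one I would write up.
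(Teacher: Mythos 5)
Your argument is exactly the paper's proof: Proposition~\ref{thm:xh_is_neutral_involutory} gives that $X^{[H]}$ is $H$-normal $H$-neutral involutory with the same neutral index as $X$, and Proposition~\ref{thm:lemma2} then yields the $H$-unitary similarity. Correct, and essentially identical to the paper's own two-line derivation.
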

\begin{proof}
By Proposition~\ref{thm:xh_is_neutral_involutory}, $X^{[H]}$ is $H$-normal $H$-neutral involutory of the same neutral index as $X$.  By Proposition~\ref{thm:lemma2},  $X$ and $X^{[H]}$ are $H$-unitarily similar.
\end{proof}

\begin{pro}
\label{thm:lemma1}
Let $H \in \mathbb{K}^{n\times n}$ be a nonsingular selfadjoint  matrix and  $X_1, X_2 \in \mathbb{K}^{n \times n}$ be  $H$-unitarily similar.  If $X_1$ is  $H$-normal $H$-neutral involutory, then $X_2$ is also $H$-normal $H$-neutral involutory and has the same neutral index.   
\end{pro}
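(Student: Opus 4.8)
The plan is to use the characterization in Theorem~\ref{thm:X_XH}(c), namely that $X$ is $H$-normal $H$-neutral involutory if and only if $X^2 = I_n$ and $X^{[H]}X = XX^{[H]} = X^{[H]} + X - I_n$, and to check that each of these conditions is preserved under $H$-unitary similarity. Write $X_2 = L^{-1} X_1 L$ for an $H$-unitary matrix $L$, so that $L^{[H]} = L^{-1}$, and hence also $(L^{-1})^{[H]} = (L^{[H]})^{-1} = L$. First, $X_2^2 = L^{-1} X_1^2 L = L^{-1} I_n L = I_n$, so $X_2$ is involutory.

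Next I would compute the $H$-adjoint of $X_2$. Using that $H$-adjunction reverses products together with $L^{[H]} = L^{-1}$ and $(L^{-1})^{[H]} = L$, one gets $X_2^{[H]} = (L^{-1} X_1 L)^{[H]} = L^{[H]} X_1^{[H]} (L^{-1})^{[H]} = L^{-1} X_1^{[H]} L$. Therefore $X_2^{[H]} X_2 = L^{-1} X_1^{[H]} X_1 L$ and $X_2 X_2^{[H]} = L^{-1} X_1 X_1^{[H]} L$; since $X_1$ is $H$-normal these two expressions coincide, so $X_2$ is $H$-normal. Likewise, conjugating the relation $X_1^{[H]} X_1 = X_1^{[H]} + X_1 - I_n$ by $L$ and using $L^{-1} I_n L = I_n$ yields $X_2^{[H]} X_2 = X_2^{[H]} + X_2 - I_n$. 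With $X_2^2 = I_n$ already established, Theorem~\ref{thm:X_XH}(c) (equivalently Proposition~\ref{thm:H_neutral}) now gives that $X_2$ is $H$-normal $H$-neutral involutory.

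Finally, for the neutral index: $X_1$ and $X_2$ are similar (via $L$), hence share the same characteristic polynomial, so the eigenvalue $-1$ has the same algebraic multiplicity for both; since an involutory matrix is diagonalizable, this common multiplicity equals $\dim \Neg(X_1) = \dim \Neg(X_2)$, so $X_1$ and $X_2$ have the same neutral index. Alternatively, once $X_2$ is known to be $H$-normal $H$-neutral involutory, the equality of neutral indices follows from the sufficiency direction of Proposition~\ref{thm:lemma2}.

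I do not anticipate any genuine obstacle; the argument is a direct conjugation. The only point requiring care is the bookkeeping of $H$-adjoints of products and inverses, in particular $(AB)^{[H]} = B^{[H]} A^{[H]}$ and $(L^{-1})^{[H]} = (L^{[H]})^{-1}$, used in conjunction with the defining property $L^{[H]} = L^{-1}$ of an $H$-unitary matrix.
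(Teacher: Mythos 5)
Your proposal is correct and follows essentially the same route as the paper: conjugate the characterization of Theorem~\ref{thm:X_XH}(c) by the $H$-unitary similarity (using $(AB)^{[H]}=B^{[H]}A^{[H]}$ and $L^{[H]}=L^{-1}$) to transfer $X^2=I_n$ and $X^{[H]}X=XX^{[H]}=X^{[H]}+X-I_n$ from $X_1$ to $X_2$. Your direct eigenvalue-multiplicity argument for the equality of neutral indices is a minor variation on the paper's citation of Proposition~\ref{thm:lemma2}, and both are valid.
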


\begin{proof} Since $X_1$ is  $H$-normal $H$-neutral involutory,  by Theorem~\ref{thm:X_XH},   $X_1^2=I_n$ and $X_1^{[H]}X_1=X_1X_1^{[H]}=X_1^{[H]}+X_1-I_n$.   If $X_1$ and $X_2$  are $H$-unitarily similar,   then there exists an $H$-unitary matrix $L$ such that  $X_1=L^{-1}X_2 L$.  Then  $X_2^2=(LX_1L^{-1})(LX_1L^{-1})=I_n$.  Moreover, 
\begin{align}
X_2^{[H]}X_2=&(LX_1^{[H]}L^{-1})(LX_1L^{-1})=L(X_1^{[H]} X_1) L^{-1},  \label{eq:x2x2} \\
X_2X_2^{[H]}=&(LX_1L^{-1})(LX_1^{[H]}L^{-1})=L(X_1X_1^{[H]} )L^{-1}, \\
X_2^{[H]}+X_2-I_n=&L(X_1^{[H]}+X_1-I_n)L^{-1}. \label{eq:x2+x2-1}
\end{align}
Through (\ref{eq:x2x2}-\ref{eq:x2+x2-1}), one has $X_2^{[H]}X_2=X_2X_2^{[H]}=X_2^{[H]}+X_2-I_n$,  thus $X_2$ is $H$-normal $H$-neutral involutory. By Proposition~\ref{thm:lemma2},  $X_2$ has the same neutral index as $X_1$.
\end{proof}

\section{$W=LX$ and $W=XL$ factorizations}
\label{sec:W=LX}

Let $H_1,H_2\in \mathbb{K}^{n\times n}$ be two congruent nonsingular selfadjoint matrices.  A matrix $W\in \mathbb{K}^{n\times n}$ is called $(H_1, H_2)$-unitary if \begin{align}
\label{eq:WHWH}
W^\dagger H_1W=H_2
\end{align}
(see~\cite{Bolshakov1995, Gohberg2005}, where they are referred to as $H_2$-$H_1$-unitary or $(H_2, H_1)$-unitary).  This terminology arises  from
\begin{align}
[Wx,Wy]_{H_1}=[x,y]_{H_2}\quad  \text{for all} \quad x,y \in \mathbb{K}^n.
\end{align}
Note that any $(H_1, H_2)$-unitary matrix so defined is nonsingular.  Equation~(\ref{eq:WHWH}) can be written  as $W^{[H_1]}W=H_1^{-1}H_2$.   By Theorem~\ref{thm:Sigma},  the negative eigenspace of $H_1^{-1}H_2$ is hyperbolic.
  In particular, for an involutory  matrix $\Phi$ such that $H$ and $H\Phi$ are congruent,  $W$ is $(H, H\Phi)$-unitary if and only if $W^{[H]}W=\Phi$, and $W$ is $(H\Phi, H)$-unitary if and only if $WW^{[H]}=\Phi$.   

This  section is  devoted to  (non-unique) factorizations of $(H, H\Phi)$-unitary  and $(H\Phi, H)$-unitary  matrices, where  $\Phi$ is  an $H$-selfadjoint  involutory matrix  with a hyperbolic negative eigenspace.  An $(H, H\Phi)$-unitary  matrix $W$ can be factorized  into a product $LX$ of an $H$-unitary  matrix $L\in \mathbb{K}^{n\times n}$ and an $H$-normal $H$-neutral involutory matrix $X \in \mathbb{K}^{n\times n} $.  An $(H\Phi, H)$-unitary matrix  $W$ can be factorized  into a product $XL$ of an $H$-unitary  matrix $L\in \mathbb{K}^{n\times n}$ and an $H$-normal $H$-neutral involutory matrix $X\in \mathbb{K}^{n\times n} $.

\begin{thm}
\label{thm:XHX=Sigma}
Let $H \in \mathbb{K}^{n\times n}$ be a nonsingular selfadjoint matrix.
Let $\Phi \in \mathbb{K}^{n \times n}$ be an $H$-selfadjoint  involutory matrix that its negative eigenspace is hyperbolic of dimension $2m$.  Then there exists an $H$-normal $H$-neutral involutory matrix $X \in \mathbb{K}^{n\times n}$  of neutral index $m$ such that $X^{[H]}X=\Phi$.
\end{thm}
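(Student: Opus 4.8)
The plan is to build $X$ by hand: split $\mathbb{K}^n$ into the eigenspaces of $\Phi$, which are automatically $H$-orthogonal, equip the negative one with a hyperbolic basis, and let $X$ act as $-1$ on half of that basis, $+1$ on the other half, and $+1$ on $\Pos(\Phi)$. All three requirements (involutory, $H$-neutral of index $m$, $H$-normal with $X^{[H]}X=\Phi$) then follow from a short computation.

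First I would record the structure of $\Phi$. Since $\Phi^2=I_n$ we have $\mathbb{K}^n=\Neg(\Phi)\dot{+}\Pos(\Phi)$; and since $\Phi$ is $H$-selfadjoint, for $x\in\Neg(\Phi)$ and $y\in\Pos(\Phi)$ one gets $-[x,y]_H=[\Phi x,y]_H=[x,\Phi^{[H]}y]_H=[x,\Phi y]_H=[x,y]_H$, hence $[x,y]_H=0$; so $\Neg(\Phi)\perp\Pos(\Phi)$. As $H$ is nonsingular and $\Neg(\Phi)$ is hyperbolic (in particular nondegenerate) of dimension $2m$, the complement $\Pos(\Phi)$ is nondegenerate as well, $\Neg(\Phi)^\perp=\Pos(\Phi)$ and $\Pos(\Phi)^\perp=\Neg(\Phi)$. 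If $m=0$ then $\Neg(\Phi)=\{0\}$, so $\Phi=I_n$ and $X=I_n$ works (it is $H$-normal $H$-neutral involutory of neutral index $0$); so assume $m\ge1$ and, as guaranteed right after the definition of a hyperbolic subspace, fix a hyperbolic basis $u_1,\dots,u_m,v_1,\dots,v_m$ of $\Neg(\Phi)$ with $[u_i,u_j]_H=[v_i,v_j]_H=0$ and $[u_i,v_j]_H=\delta_{ij}$.

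Next, define $X\in\mathbb{K}^{n\times n}$ by $Xu_i=-u_i$, $Xv_i=v_i$ $(i=1,\dots,m)$ and $Xx=x$ for all $x\in\Pos(\Phi)$. Then $X^2=I_n$ and $\Neg(X)=\mathop{\rm span}(u_1,\dots,u_m)$, which is $H$-neutral because $[u_i,u_j]_H=0$; thus $X$ is $H$-neutral involutory of neutral index $m$. To pin down $X^{[H]}$ I would verify that the candidate $Y$ given by $Yu_i=u_i$, $Yv_i=-v_i$, $Yx=x$ on $\Pos(\Phi)$ satisfies $[Xa,b]_H=[a,Yb]_H$ for all $a,b$ chosen from $u_1,\dots,u_m,v_1,\dots,v_m$ together with a basis of $\Pos(\Phi)$: the mixed terms and the $\Pos(\Phi)$-terms are trivial or vanish by $H$-orthogonality and by $X=Y=I$ on $\Pos(\Phi)$, and the four cases inside $\Neg(\Phi)$ reduce to the hyperbolic relations (using only $[u_i,v_j]_H=\overline{[v_j,u_i]_H}=\delta_{ij}$ in the Hermitian case). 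Since each side is additive and (conjugate-)linear in each argument, this forces $X^{[H]}=Y$. Finally, computing on the same basis, $X^{[H]}X=YX$ and $XX^{[H]}=XY$ both act as $-\mathrm{id}$ on $\Neg(\Phi)$ and as $\mathrm{id}$ on $\Pos(\Phi)$, i.e.\ $X^{[H]}X=XX^{[H]}=\Phi$; in particular $X$ is $H$-normal, and $X^{[H]}X=\Phi$, as required.

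There is no genuine obstacle, only bookkeeping; the one delicate point is the identification $X^{[H]}=Y$ in the complex Hermitian case, where one must keep the conjugation in $[\cdot,\cdot]_H$ in view (it changes nothing). If one prefers to reuse the earlier canonical forms, an alternative is to bring $(\Phi,H)$ by a unitary similarity to $(J^{[K]}J,\,K)$ with $J,K$ as in Theorem~\ref{thm:XH_JK} — this is possible because the restriction of $H$ to the $2m$-dimensional negative eigenspace of $\Phi$ may be replaced by any congruent matrix, that block commuting with $-I_{2m}$ — then observe that $J$ is $K$-normal $K$-neutral involutory of neutral index $m$ with $J^{[K]}J=J^{[K]}+J-I_n$, and pull $J$ back through the similarity.
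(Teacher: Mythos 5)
Your construction is correct. You verify all three required properties: $X^2=I_n$ by construction; $\Neg(X)=\mathop{\rm span}(u_1,\dots,u_m)$ is $H$-neutral of dimension $m$; and the identification $X^{[H]}=Y$ is legitimate, since both $[Xa,b]_H$ and $[a,Yb]_H$ are (conjugate-)linear in each argument and agree on the chosen basis, the adjoint being unique because $H$ is nonsingular (your handling of $[v_i,u_j]_H=\overline{[u_j,v_i]_H}=\delta_{ij}$ in the Hermitian case is exactly the point that needs care, and it checks out). From $X^{[H]}X=XX^{[H]}=\Phi$ you get $H$-normality and the required factorization, so $X$ is $H$-normal $H$-neutral involutory of neutral index $m$. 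Your route differs from the paper's: the paper first reduces $(\Phi,H)$ to a canonical pair via Theorem~\ref{thm:canonical_C} or~\ref{thm:canonical_R}, defines $X=QPQ^{-1}$ with an explicit $P$ built from $Z_m$ blocks, and then invokes the characterization $X^{[H]}X=XX^{[H]}=X^{[H]}+X-I_n$ of Theorem~\ref{thm:X_XH}; you instead work coordinate-free, using only diagonalizability of the involution $\Phi$, the $H$-orthogonality $\Neg(\Phi)\perp\Pos(\Phi)$ (which you prove from $H$-selfadjointness), and a Witt basis of the hyperbolic negative eigenspace, computing the adjoint by hand. The two constructions yield essentially the same $X$ — a reflection through one neutral half of $\Neg(\Phi)$ — but yours is more elementary and self-contained (no appeal to the Gohberg--Lancaster--Rodman canonical form), while the paper's version makes the verification mechanical via Theorem~\ref{thm:X_XH} and ties the solution directly to the canonical forms of Section~4 (indeed your closing remark about passing to $(J^{[K]}J,K)$ with $J,K$ as in Theorem~\ref{thm:XH_JK} is just the paper's proof after a congruence inside the negative eigenspace, and is also correct).
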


\begin{proof} $\Phi$ is involutory,  so the  Jordan form $J_\Phi$  of  $\Phi$ is a diagonal matrix  with entries $+1$ and/or  $-1$.   Since  $\Phi$ is $H$-selfadjoint, by Theorem~\ref{thm:canonical_C} or~\ref{thm:canonical_R},  the pair $(\Phi, H)$ is unitarily  similar to a canonical  pair $(J_\Phi, M)$ through some suitable  transformation matrix $Q\in \mathbb{K}^{n \times n}$.   The matrix $M$ has the same block structure as $J_\Phi$,  that is,  each block  in $M$ is size $1\times 1$.  Since  the  negative eigenspace of $\Phi$ is hyperbolic of dimension $2m$,  so  there are $2m$ blocks of $-1$ in $J_\Phi$,  where $m$ blocks  are corresponding to $+1$ in $M$  and   the other $m$  blocks are corresponding to $-1$ in $M$.  Therefore,  we can write  $(J_\Phi, M)$ as follows.
\begin{align}
Q^{-1}\Phi Q=&J_\Phi=\begin{pmatrix}
-I_m &&\\ &-I_m &\\&&I_{n-2m}
\end{pmatrix},
\end{align}
and
\begin{align}
Q^\dagger HQ=M=\begin{pmatrix}
-I_m &&\\ &I_m &\\&&\eta_{p-m, q-m}
\end{pmatrix}, \label{eq:QHQ=K}
\end{align} 
with $\eta_{p-m,q-m} = I_{p-m} \oplus -I_{q-m}$.
Let
\begin{align}
X=QPQ^{-1}, \label{eq:X=QPQ}
\end{align}
where 
\begin{align}
\label{eq:P}
P=\begin{pmatrix}
&Z_m&\\Z_m &&\\&&I_{n-2m}
\end{pmatrix}.
\end{align}
We are going to show that $X^{[H]}X=\Phi$  and  $X$ is $H$-normal $H$-neutral involutory of neutral index $m$. By direct computation, $X^2=I_n$, thus $X$ is involutory. Moreover,
\begin{align}
Q^{-1}X^{[H]}Q=P^{[M]}=\begin{pmatrix}
&-Z_m&\\-Z_m &&\\&&I_{n-2m}
\end{pmatrix},
\end{align}
and it can be verified that 
\begin{align}
P^{[M]}P=PP^{[M]}=P^{[M]}+P-I_n.
\label{eq:PMP}
\end{align}
Using Equations~(\ref{eq:QHQ=K}-\ref{eq:PMP}) in a direct calculation of $X^{[H]}X$, $XX^{[H]}$ and $X^{[H]}+X-I_n$ gives
\begin{align}
X^{[H]}X=XX^{[H]}=X^{[H]}+X-I_n=\Phi.
\end{align}
By Theorem~\ref{thm:X_XH}, $X$ is $H$-normal $H$-neutral involutory. 

Finally, diagonalizing~(\ref{eq:P}),  $X$ has $m$ negative eigenvalues $-1$ and $n-m$ positive eigenvalues $+1$. Thus $X$ is $H$-normal $H$-neutral involutory of neutral index $m$. 
\end{proof}

The matrix $X$ in Theorem~\ref{thm:XHX=Sigma} is not unique.  It is clear that  if $X$ is an $H$-normal $H$-neutral involutory solution, then $X^{[H]}$ is also a solution.  
More general,  all  the $H$-normal $H$-neutral involutory solutions of $X^{[H]}X=\Phi$ are $H$-unitarily similar to each other as  proved in  the following statements. 

\begin{lem}
\label{thm:index_m}
Let $H \in \mathbb{K}^{n\times n}$ be a nonsingular selfadjoint matrix.  Let  $\Phi \in \mathbb{K}^{n \times n}$ be an $H$-selfadjoint  involutory matrix that  its negative eigenspace is hyperbolic of dimension $2m$.   Then all the  $H$-normal $H$-neutral involutory  solutions of $X^{[H]}X=\Phi$ have neutral index $m$.  
\end{lem}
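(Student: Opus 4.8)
I want to show that if $X$ is any $H$-normal $H$-neutral involutory matrix satisfying $X^{[H]}X=\Phi$, then the neutral index of $X$ equals $m$, where $2m$ is the dimension of the hyperbolic negative eigenspace of $\Phi$. The key tool is Proposition~\ref{thm:Sigma_2m}: for an $H$-normal $H$-neutral involutory matrix $X$ of neutral index $m_X$, the negative eigenspace of $X^{[H]}X$ has dimension exactly $2m_X$.

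\textbf{Main argument.} Let $X$ be an $H$-normal $H$-neutral involutory solution of $X^{[H]}X=\Phi$, and let $m_X=\dim(\Neg X)$ be its neutral index. Since $\Phi=X^{[H]}X$, Proposition~\ref{thm:Sigma_2m} tells us that the negative eigenspace of $\Phi$ is hyperbolic of dimension $2m_X$. But by hypothesis the negative eigenspace of $\Phi$ is hyperbolic of dimension $2m$. Since a subspace has a well-defined dimension, $2m_X=2m$, hence $m_X=m$. This completes the proof.

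\textbf{On obstacles.} There is essentially no obstacle here — the statement is almost immediate from Proposition~\ref{thm:Sigma_2m}, which already did the real work (computing $\tr(X^{[H]}X)=n-4m_X$ via $X^{[H]}X=X^{[H]}+X-I_n$ and $\tr(X)=\tr(X^{[H]})=n-2m_X$, then solving $n_++n_-=n$, $n_+-n_-=n-4m_X$ to get $n_-=2m_X$). The only thing to be careful about is that Proposition~\ref{thm:Sigma_2m} is stated for an $H$-normal $H$-neutral involutory matrix, so one must invoke the hypothesis on $X$ to apply it; the matrix $\Phi$ itself need not be of that form, but we only ever feed $X$ (not $\Phi$) into the proposition. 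One could alternatively argue via the canonical forms of Theorem~\ref{thm:XH_JK}, reading off that the negative eigenspace of $X^{[H]}X$ has the block $-I_{2m_X}$, but the trace argument already packaged in Proposition~\ref{thm:Sigma_2m} is cleaner and sufficient.
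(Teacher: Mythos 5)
Your proof is correct and is essentially identical to the paper's: both apply Proposition~\ref{thm:Sigma_2m} to the solution $X$ to conclude that $\dim(\Neg \Phi)=2m_X$, and then compare with the hypothesis $\dim(\Neg \Phi)=2m$ to get $m_X=m$. Nothing further is needed.
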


\begin{proof}
Let $X$ be a solution.   Assuming the neutral index of $X$ is $m^\prime$,  by Proposition~\ref{thm:Sigma_2m},  the negative eigenspace of $\Phi=X^{[H]}X$ has dimension $2m^\prime$, thus $2m^\prime=2m$ and so $m^\prime=m$.
\end{proof}

\begin{thm}
Let $H \in \mathbb{K}^{n\times n}$ be a nonsingular selfadjoint matrix and $\Phi  \in \mathbb{K}^{n\times n}$ be  an $H$-selfadjoint  involutory matrix   that  its negative eigenspace is hyperbolic.    Let $X_1 \in \mathbb{K}^{n\times n}$ be an $H$-normal $H$-neutral involutory solution of $X^{[H]}X=\Phi$. 
Then  $X_2$ is  an $H$-normal $H$-neutral involutory solution of  $X^{[H]}X=\Phi$ if and only if  $X_2$ is  $H$-unitarily similar to $X_1$, where the  similarity matrix commutes with $\Phi$. 
\end{thm}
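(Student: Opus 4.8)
The plan is to prove the two implications separately. For the \emph{if} direction, suppose $X_2 = L^{-1} X_1 L$ with $L$ an $H$-unitary matrix commuting with $\Phi$. Since $X_1$ is $H$-normal $H$-neutral involutory and $X_1, X_2$ are $H$-unitarily similar, Proposition~\ref{thm:lemma1} immediately gives that $X_2$ is $H$-normal $H$-neutral involutory (indeed with the same neutral index). It then remains only to check $X_2^{[H]} X_2 = \Phi$. Using $L^{[H]} = L^{-1}$ (because $L$ is $H$-unitary) together with the anti-homomorphism property $(ABC)^{[H]} = C^{[H]} B^{[H]} A^{[H]}$, one gets $X_2^{[H]} = L^{-1} X_1^{[H]} L$, whence $X_2^{[H]} X_2 = L^{-1}(X_1^{[H]} X_1) L = L^{-1} \Phi L = \Phi$, the final equality holding because $L$ commutes with $\Phi$. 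So this direction is a short verification once Proposition~\ref{thm:lemma1} is invoked.

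For the \emph{only if} direction, suppose $X_1$ and $X_2$ are both $H$-normal $H$-neutral involutory solutions of $X^{[H]}X = \Phi$. By Lemma~\ref{thm:index_m} they have the same neutral index $m$, where $2m$ is the dimension of the hyperbolic negative eigenspace of $\Phi$. By Theorem~\ref{thm:XH_JK} there are invertible matrices $Q_1, Q_2$ with $Q_i^{-1} X_i Q_i = J$ and $Q_i^{\dagger} H Q_i = K$ for $i = 1,2$, where the canonical pair $(J,K)$ depends only on $m$ and the inertia $(p,q)$ of $H$ and hence is the \emph{same} for $i = 1$ and $i = 2$. Put $L = Q_1 Q_2^{-1}$. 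A direct computation gives $L^{\dagger} H L = (Q_2^{\dagger})^{-1} K Q_2^{-1} = H$, so $L$ is $H$-unitary, and $L^{-1} X_1 L = Q_2 J Q_2^{-1} = X_2$. Finally, recall from the computation of $X^{[H]}X$ in the canonical basis following Corollary~\ref{thm:XH_PM} that $Q_i^{-1} \Phi Q_i = Q_i^{-1}(X_i^{[H]} X_i) Q_i$ equals one fixed matrix $\Phi_J = (-I_m) \oplus (-I_m) \oplus I_{p-m} \oplus I_{q-m}$, independent of $i$. Therefore $\Phi Q_1 = Q_1 \Phi_J$ and $\Phi Q_2 = Q_2 \Phi_J$, and so $\Phi L = Q_1 \Phi_J Q_2^{-1} = L \Phi$; that is, the similarity matrix $L$ commutes with $\Phi$, as required.

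The main obstacle is precisely the commutation condition $L\Phi = \Phi L$. Proposition~\ref{thm:lemma2} already produces \emph{some} $H$-unitary similarity between $X_1$ and $X_2$ from the equality of their neutral indices, but a generic such similarity has no reason to respect $\Phi$. What makes the argument go through is the rigidity of the canonical form in Theorem~\ref{thm:XH_JK}: it depends only on $m$ and the inertia of $H$, so conjugating $\Phi = X_i^{[H]}X_i$ into the canonical basis yields the same matrix $\Phi_J$ for both $i$, which forces $L = Q_1 Q_2^{-1}$ to centralize $\Phi$. The remaining points — that $L$ is genuinely $H$-unitary and that the $H$-adjoint transforms correctly under the similarity — are routine and need only a brief check.
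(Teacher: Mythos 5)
Your proof is correct, and the sufficiency half coincides with the paper's: invoke Proposition~\ref{thm:lemma1} and compute $X_2^{[H]}X_2 = L^{-1}(X_1^{[H]}X_1)L = \Phi$ using the commutation hypothesis. For necessity, however, you take a genuinely different and longer route than the paper. The paper simply cites Proposition~\ref{thm:lemma2} (same neutral index, by Lemma~\ref{thm:index_m}, hence $H$-unitarily similar) to get some $H$-unitary $L$ with $X_2 = L X_1 L^{-1}$, and then observes that the commutation is automatic: since $L^{[H]}=L^{-1}$, one has $\Phi = X_2^{[H]}X_2 = L X_1^{[H]} X_1 L^{-1} = L\Phi L^{-1}$, so $L\Phi=\Phi L$ with no further work. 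Your remark that ``a generic such similarity has no reason to respect $\Phi$'' is therefore mistaken --- \emph{every} $H$-unitary similarity carrying $X_1$ to $X_2$ centralizes $\Phi$, precisely because both matrices solve $X^{[H]}X=\Phi$. Your alternative --- going back into Theorem~\ref{thm:XH_JK}, noting the canonical pair $(J,K)$ depends only on $m$ and the inertia of $H$, setting $L=Q_1Q_2^{-1}$, and checking $Q_i^{-1}\Phi Q_i = (-I_m)\oplus(-I_m)\oplus I_{p-m}\oplus I_{q-m}$ for both $i$ --- is a sound re-derivation (it essentially reproves the necessity half of Proposition~\ref{thm:lemma2} and then conjugates $\Phi$ into the canonical basis), and it has the merit of being explicit and constructive about $L$; but it buys nothing that the one-line identity $\Phi = L\Phi L^{-1}$ does not already give.
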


\begin{proof} 
Sufficiency:  Let $X_2=LX_1L^{-1}$, where $L^{[H]}L=I_n$ and $L\Phi= \Phi L$.
Since $X_2$ is  $H$-unitarily similar to $X_1$, then by Proposition~\ref{thm:lemma1},  $X_2$ is $H$-normal $H$-neutral involutory.  Furthermore, 
\begin{align}
X_2^{[H]}X_2=&(LX_1^{[H]}L^{-1})(LX_1L^{-1})=L\Phi L^{-1}=\Phi.
\end{align}

Necessity: By Lemma~\ref{thm:index_m},  $X_1$ and $X_2$ have the same  neutral index.  By Proposition~\ref{thm:lemma2},   $X_1$ and $X_2$ are $H$-unitarily similar, that is, there exists an $H$-unitary matrix  $L$ such that  $X_2=LX_1L^{-1}$.  Since  $\Phi=X_2^{[H]}X_2=LX_1^{[H]}L^{-1}LX_1L^{-1}=L\Phi L^{-1}$,  thus $L$ commutes with $\Phi$.
\end{proof}

Now we present the decompositions of  $(H, H\Phi)$-unitary  and $(H\Phi, H)$-unitary matrices. 
\begin{thm}
\label{thm:W=LX}
Let $H \in \mathbb{K}^{n\times n}$ be a nonsingular selfadjoint matrix and  $\Phi \in \mathbb{K}^{n\times n}$ be an $H$-selfadjoint involutory matrix that  its negative eigenspace is hyperbolic.   Any $(H, H\Phi)$-unitary matrix $W \in \mathbb{K}^{n\times n}$ can be factorized (non-uniquely) as 
\begin{align}
W=LX,
\end{align}
where $L \in \mathbb{K}^{n\times n}$ is $H$-unitary  and $X \in \mathbb{K}^{n\times n}$ is $H$-normal $H$-neutral involutory.
\end{thm}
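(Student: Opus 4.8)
The plan is to construct the involutory factor first and then read off the unitary one. Since $\Phi$ is $H$-selfadjoint involutory with hyperbolic negative eigenspace, Theorem~\ref{thm:XHX=Sigma} furnishes an $H$-normal $H$-neutral involutory matrix $X\in\mathbb{K}^{n\times n}$ with $X^{[H]}X=\Phi$; this will be the required third factor. I would then simply set $L=WX^{-1}=WX$ (using $X^2=I_n$), so that $W=LX$ holds by construction, and the only thing left to verify is that $L$ is $H$-unitary.

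For that I would compute $L^{[H]}L$ directly. Since $W$ is $(H,H\Phi)$-unitary, we have $W^{[H]}W=\Phi=X^{[H]}X$, and since both $X$ and $X^{[H]}$ are involutory (Lemma~\ref{thm:XH_involutory}, or Theorem~\ref{thm:X_XH}(c)),
\begin{align}
L^{[H]}L=(WX)^{[H]}(WX)=X^{[H]}\,\bigl(W^{[H]}W\bigr)\,X=X^{[H]}\bigl(X^{[H]}X\bigr)X=\bigl(X^{[H]}\bigr)^{2}X^{2}=I_n .
\end{align}
Because $L$ is a square matrix with $L^{[H]}L=I_n$, it is invertible with $L^{-1}=L^{[H]}$, so $LL^{[H]}=I_n$ as well; hence $L$ is $H$-unitary, and $W=LX$ is the desired factorization.

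As for obstacles: essentially all of the substantive work has already been carried out in Theorem~\ref{thm:XHX=Sigma}, whose proof builds $X$ from the canonical form of the pair $(\Phi,H)$. Once $X$ is available, the argument above is a one-line computation and no commutativity is even needed. The non-uniqueness asserted in the statement requires no separate argument, since any of the (generally many) $H$-normal $H$-neutral involutory solutions of $X^{[H]}X=\Phi$ — for example $X^{[H]}$ in place of $X$ — gives an equally valid pair $(L,X)$. The only points demanding a little care are the bookkeeping identities ``$W$ is $(H,H\Phi)$-unitary $\iff W^{[H]}W=\Phi$'' and ``$X^{[H]}$ is involutory'', both of which are already recorded earlier in the paper.
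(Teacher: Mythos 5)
Your proposal is correct and follows essentially the same route as the paper: take the $H$-normal $H$-neutral involutory solution $X$ of $X^{[H]}X=\Phi$ from Theorem~\ref{thm:XHX=Sigma}, set $L=WX^{-1}$, and verify $L^{[H]}L=I_n$ from $W^{[H]}W=\Phi$. The only cosmetic difference is that you use $X^{-1}=X$ explicitly and spell out that $L^{[H]}L=I_n$ implies $LL^{[H]}=I_n$, which the paper leaves implicit.
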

 
\begin{proof}
Let $X$ be an $H$-normal $H$-neutral involutory solution of $X^{[H]}X=\Phi$  as obtained in Theorem~\ref{thm:XHX=Sigma}.   Let $L=WX^{-1}$.  Using $W^{[H]}W=X^{[H]}X=\Phi$,  we compute  $L^{[H]}L=X^{-[H]}W^{[H]}WX^{-1}=X^{-[H]}\Phi X^{-1}=I_n$.  Thus $L$ is $H$-unitary,  $X$ is $H$-normal $H$-neutral involutory and $W=LX$.   
\end{proof}

\begin{thm}
\label{thm:W=XL}Let $H \in \mathbb{K}^{n\times n}$ be a nonsingular selfadjoint matrix and  $\Phi \in \mathbb{K}^{n\times n}$ be an $H$-selfadjoint involutory matrix that  its negative eigenspace is hyperbolic.  Any $(H\Phi, H)$-unitary matrix $W \in \mathbb{K}^{n\times n}$ can be factorized (non-uniquely) as 
\begin{align}
W=XL,
\end{align}
where $L \in \mathbb{K}^{n\times n}$ is $H$-unitary  and $X \in \mathbb{K}^{n\times n}$ is $H$-normal $H$-neutral involutory.
\end{thm}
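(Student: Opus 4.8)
The plan is to mirror the proof of Theorem~\ref{thm:W=LX}, interchanging the roles of $W^{[H]}W$ and $WW^{[H]}$. Recall that, as observed just before Theorem~\ref{thm:XHX=Sigma}, the matrix $W$ being $(H\Phi,H)$-unitary is equivalent to $WW^{[H]}=\Phi$. So the goal is to peel off from the \emph{left} an $H$-normal $H$-neutral involutory factor $X$ with $XX^{[H]}=\Phi$, leaving an $H$-unitary remainder $L$ defined by $L=X^{-1}W$.

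First I would invoke Theorem~\ref{thm:XHX=Sigma}: since $\Phi$ is $H$-selfadjoint involutory and its negative eigenspace is hyperbolic, say of dimension $2m$, there exists an $H$-normal $H$-neutral involutory matrix $X\in\mathbb{K}^{n\times n}$ of neutral index $m$ with $X^{[H]}X=\Phi$. The key observation, which is what makes the same $X$ serve for the left factorization, is that $X$ is $H$-normal, so by Theorem~\ref{thm:X_XH}(c) one also has $XX^{[H]}=X^{[H]}X=\Phi$.

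Next I would set $L=X^{-1}W$, so that $W=XL$ holds automatically; it only remains to verify that $L$ is $H$-unitary. Using $WW^{[H]}=\Phi$ and $\Phi=XX^{[H]}$, a direct computation gives $LL^{[H]}=X^{-1}WW^{[H]}X^{-[H]}=X^{-1}\Phi X^{-[H]}=X^{-1}(XX^{[H]})X^{-[H]}=I_n$, where $X^{-[H]}$ denotes $(X^{[H]})^{-1}=(X^{-1})^{[H]}$. Since $L$ is square, $LL^{[H]}=I_n$ forces $L^{[H]}=L^{-1}$ and hence $L^{[H]}L=I_n$ as well, so $L$ is $H$-unitary. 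This yields the desired factorization $W=XL$ with $X$ $H$-normal $H$-neutral involutory and $L$ $H$-unitary, and the factorization is non-unique because $X$ is (any $H$-normal $H$-neutral involutory solution of $X^{[H]}X=\Phi$ produces another decomposition).

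I do not expect a genuine obstacle: the single nontrivial ingredient, the existence of an $H$-normal $H$-neutral involutory $X$ with $XX^{[H]}=\Phi$, is already delivered by Theorem~\ref{thm:XHX=Sigma} combined with the commutation identity in Theorem~\ref{thm:X_XH}(c). The only point requiring mild care is the bookkeeping of adjoints of inverses, namely $X^{-[H]}=(X^{[H]})^{-1}$ (and, if one prefers, $X^{-1}=X$ and $X^{-[H]}=X^{[H]}$ since both $X$ and $X^{[H]}$ are involutory by Lemma~\ref{thm:XH_involutory}).
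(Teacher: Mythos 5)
Your proposal is correct and follows essentially the same route as the paper: take the $H$-normal $H$-neutral involutory $X$ from Theorem~\ref{thm:XHX=Sigma} (using $H$-normality to get $XX^{[H]}=X^{[H]}X=\Phi$), set $L=X^{-1}W$, and verify $LL^{[H]}=I_n$ from $WW^{[H]}=\Phi$. Your explicit remark that normality converts the right-product condition into the left-product one is exactly the point the paper's proof uses implicitly.
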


\begin{proof}
Let $X$ be an $H$-normal $H$-neutral involutory solution of $X X^{[H]}=\Phi$  as obtained in Theorem~\ref{thm:XHX=Sigma}. Let $L=X^{-1}W$. Using $WW^{[H]}=X X^{[H]}=\Phi$,  we compute $L L^{[H]}=X^{ -1}W W^{[H]}X^{-[H]}=X^{-1}\Phi X^{-[H]}=I_n$.  Thus $L$ is $H$-unitary,  $X$ is $H$-normal $H$-neutral involutory and $W=X L$.   
\end{proof}

\section{$F=LXS= X_1L_1 S= S^\prime X^\prime L^\prime=S^\prime  L^\prime_1 X^\prime_1$ decompositions}
\label{sec:F=LXS}

In Theorems~\ref{thm:F=WS} and \ref{thm:F=SW},  we quote the results we proved in~\cite{Sui2015}.

\begin{thm}
\label{thm:F=WS}
Given a nonsingular scalar product defined by  $N\in \mathbb{K}^{n\times n}$ and a generalized sign function $\sigma: \mathbb{K}^{n\times n} \to \mathbb{K}^{n\times n}$,  a matrix $F\in \mathbb{K}^{n\times n}$ has a  decomposition 
\begin{align}
\label{eq:F=WS}
F = W S,
\end{align}
where, with $\Sigma=\sigma(F^{[N]}F)$, the matrix  $W\in \mathbb{K}^{n\times n}$ is $(N, N\Sigma^{-1})$-unitary with $(W^{[N]})^{[N]}=W$ and  the matrix $S \in \mathbb{K}^{n\times n}$ is $r$-positive-definite, $N$-selfadjoint and $N\Sigma$-selfadjoint, if and only if $F$ is nonsingular and $(F^{[N]})^{[N]}=F$.  When such a decomposition  exists it is unique, with $S$ given by $S=(\Sigma F^{[N]}F)^{1/2}$ and $W=FS^{-1}$.
\end{thm}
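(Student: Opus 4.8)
The plan is to establish the three claims of the theorem — sufficiency of the two conditions, their necessity, and uniqueness — by working directly with the candidate formulas $S=(\Sigma F^{[N]}F)^{1/2}$ and $W=FS^{-1}$. The only tools I would invoke are the elementary adjoint rules $(AB)^{[N]}=B^{[N]}A^{[N]}$ and $(A^{-1})^{[N]}=(A^{[N]})^{-1}$ for nonsingular $N$; the identity $f(A)^{[N]}=f(A^{[N]})$, valid for any primary matrix function $f$ whose scalar stem function is compatible with complex conjugation (both the principal square root and the generalized sign function qualify); the fact that every primary matrix function of $A$ is a polynomial in $A$, hence commutes with $A$ and with every other such function; and the defining properties of the generalized sign function $\sigma$, namely that $\Sigma:=\sigma(A)$ is a primary matrix function of $A$ with $\Sigma^{2}=I$ and $\Sigma^{[N]}=\sigma(A^{[N]})$, whose stem function agrees with the ordinary sign on the nonzero reals, so that $\Sigma A$ has no negative real eigenvalue whenever $A$ is nonsingular.

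For sufficiency, assume $F$ is nonsingular and $(F^{[N]})^{[N]}=F$, and set $A:=F^{[N]}F$. Then $A^{[N]}=F^{[N]}(F^{[N]})^{[N]}=A$, so $A$ is $N$-selfadjoint, and therefore $\Sigma=\sigma(A)$ satisfies $\Sigma^{[N]}=\sigma(A^{[N]})=\Sigma$, $\Sigma^{2}=I$, and commutes with $A$. Since $F$ is nonsingular, $\Sigma A$ has no negative real eigenvalue, so $S:=(\Sigma A)^{1/2}$ exists, is $r$-positive-definite, and is a polynomial in $A$, hence commutes with both $A$ and $\Sigma$. Taking $N$-adjoints, $S^{[N]}=((\Sigma A)^{[N]})^{1/2}=(A^{[N]}\Sigma^{[N]})^{1/2}=(\Sigma A)^{1/2}=S$, so $S$ is $N$-selfadjoint; combined with the fact that $S$ and $\Sigma$ commute, this gives $(N\Sigma)S=NS\Sigma=S^{\dagger}N\Sigma=S^{\dagger}(N\Sigma)$, so $S$ is $N\Sigma$-selfadjoint. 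Finally put $W:=FS^{-1}$, so $F=WS$. Using $F^{\dagger}NF=NF^{[N]}F=NA$, $S^{\dagger}N=NS$ (hence $S^{-\dagger}N=NS^{-1}$), and $S^{2}=\Sigma A$, one computes $W^{\dagger}NW=S^{-\dagger}(NA)S^{-1}=NS^{-2}A=N\Sigma^{-1}$, so $W$ is $(N,N\Sigma^{-1})$-unitary; and $(W^{[N]})^{[N]}=(S^{-1}F^{[N]})^{[N]}=(F^{[N]})^{[N]}(S^{-1})^{[N]}=FS^{-1}=W$.

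For necessity, suppose $F=WS$ with $W$ and $S$ as in the statement. Both factors are nonsingular (an $(N,N\Sigma^{-1})$-unitary matrix is invertible, and an $r$-positive-definite matrix is invertible), hence so is $F$; and $(F^{[N]})^{[N]}=(S^{[N]}W^{[N]})^{[N]}=(W^{[N]})^{[N]}(S^{[N]})^{[N]}=WS=F$, using $S^{[N]}=S$ and $(W^{[N]})^{[N]}=W$. For uniqueness, from $W^{\dagger}NW=N\Sigma^{-1}$ I get $W^{[N]}W=\Sigma^{-1}$, and since $\Sigma^{2}=I$ and $S$ commutes with $\Sigma$ (being both $N$- and $N\Sigma$-selfadjoint), $\Sigma F^{[N]}F=\Sigma S^{[N]}W^{[N]}WS=\Sigma S\Sigma^{-1}S=S^{2}$. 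As $\Sigma F^{[N]}F$ is nonsingular with no negative real eigenvalue and $S$ is $r$-positive-definite, the definition of the principal square root forces $S=(\Sigma F^{[N]}F)^{1/2}$, and then $W=FS^{-1}$ is determined as well.

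The computations are routine; the part needing care is simply not over-using the structure of $N$. Because $N$ is only assumed nonsingular (a scalar product, not an inner product), one may not write $N^{\dagger}=N$, and this is exactly why the hypothesis $(F^{[N]})^{[N]}=F$ is indispensable in the sufficiency argument and why it reappears as a genuine necessary condition. The other point to verify carefully is that the identity $f(A)^{[N]}=f(A^{[N]})$ applies to $f=\sigma$ and $f=(\cdot)^{1/2}$, which comes down to conjugation-compatibility of their stem functions. The single load-bearing analytic fact is that $\sigma(\lambda)\lambda$ is never a negative real number for an eigenvalue $\lambda\ne 0$ of $A$: this is what simultaneously guarantees existence of the $r$-positive-definite factor $S$ in the sufficiency direction and its uniqueness (via the principal square root) in the last step.
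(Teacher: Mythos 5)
The paper itself offers no proof of this statement: Theorems~\ref{thm:F=WS} and~\ref{thm:F=SW} are explicitly quoted from~\cite{Sui2015}, so there is no in-paper argument to compare yours against line by line. Judged on its own, your proof is correct and self-contained, and it is the natural argument: for sufficiency, $(F^{[N]})^{[N]}=F$ gives $A:=F^{[N]}F$ $N$-selfadjoint, the conjugation-compatibility of the stem functions gives $\Sigma^{[N]}=\Sigma$ and $S^{[N]}=S$ for $S=(\Sigma A)^{1/2}$, commutation of primary matrix functions gives the $N\Sigma$-selfadjointness, and the computations $W^{\dagger}NW=N\Sigma^{-1}$ and $(W^{[N]})^{[N]}=W$ check out; for uniqueness, your derivation of $S\Sigma=\Sigma S$ from the two selfadjointness hypotheses, leading to $S^{2}=\Sigma F^{[N]}F$ and then to $S=(\Sigma F^{[N]}F)^{1/2}$ by uniqueness of the $r$-positive-definite square root, is exactly right. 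The one caveat is that the ``generalized sign function'' is defined only in~\cite{Sui2015}, so you had to posit its properties: that $\sigma(A)$ is a primary matrix function of $A$, conjugation-compatible, involutory, and with $\sigma(\lambda)\lambda$ never a negative real number on the spectrum of a nonsingular $A$. These are precisely the properties the argument needs (note that $\sigma(A)^{2}=I_n$ also requires the assigned derivative values of the stem function to vanish, as in Section~\ref{sec:sign_function}), and they hold for the specific $\Sign$ that the paper substitutes when it specializes to Theorem~\ref{thm:F=WS_H}; for full generality one would have to verify them against the actual definition in~\cite{Sui2015}, but with that proviso your proof is sound and presumably close in spirit to the one given there.
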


Similarly,  there exists a  unique left decomposition. 
\begin{thm}
\label{thm:F=SW}
If  a nonsingular matrix $F\in \mathbb{K}^{n\times n}$ has a decomposition  $F=WS$ in Theorem~\ref{thm:F=WS},  then $F$  also has a decomposition  
\begin{align}
\label{eq:F=SW}
F=S^\prime \, W,
\end{align}
where, with $\Sigma^\prime= \sigma(FF^{[N]})$,  the matrix $S^\prime \in \mathbb{K}^{n\times n}$ is  $r$-positive-definite, $N$-selfadjoint and $N\Sigma^\prime$-selfadjoint,  and the matrix $W\in \mathbb{K}^{n\times n}$  is $(N\Sigma^\prime,N)$-unitary.  When such a decomposition  exists it is unique, with $S^\prime$ given by $S^\prime=(\Sigma^\prime FF^{[N]})^{1/2}$ and $W=S^{\prime-1}F$.  $W$ in~(\ref{eq:F=WS}) and~(\ref{eq:F=SW}) is the same one. 
\end{thm}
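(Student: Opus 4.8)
The plan is to construct the left factor directly from the right decomposition $F=WS$ of Theorem~\ref{thm:F=WS} by conjugating $S$ by $F$ (equivalently by $W$), and then to read off each required property from the corresponding property of $W$ and $S$.

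First I would set $S':=FSF^{-1}$. Since $F=WS$, this equals $WSW^{-1}$, so $F=WS=(WSW^{-1})W=S'W$ with the same factor $W$, and $W=(S')^{-1}F$. To identify $S'$ with $(\Sigma'FF^{[N]})^{1/2}$, I would use that $FF^{[N]}$ and $F^{[N]}F$ are similar via $F$, namely $FF^{[N]}=F(F^{[N]}F)F^{-1}$; because $\sigma$ and the principal square root are primary matrix functions, hence invariant under similarity, this gives $\Sigma'=\sigma(FF^{[N]})=F\Sigma F^{-1}$ and $\Sigma'FF^{[N]}=F(\Sigma F^{[N]}F)F^{-1}=FS^2F^{-1}=(S')^2$. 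As $S$ is $r$-positive-definite, so is the similar matrix $S'$, whence $S'=(\Sigma'FF^{[N]})^{1/2}$. The same computation together with the fact that $S$ commutes with $\Sigma$ (both being primary matrix functions built from $F^{[N]}F$) gives the convenient form $\Sigma'=W\Sigma W^{-1}$, so $\Sigma'W=W\Sigma$.

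Next I would check the structural properties by short direct manipulations, using that $W$ is $(N,N\Sigma^{-1})$-unitary, i.e. $W^\dagger NW=N\Sigma^{-1}$ or equivalently $W^{[N]}W=\Sigma^{-1}$, that $S$ is $N$-selfadjoint and $N\Sigma$-selfadjoint, and that $S\Sigma=\Sigma S$. From $W^\dagger NW=N\Sigma^{-1}$ and $\Sigma'W=W\Sigma$ one gets $W^\dagger(N\Sigma')W=W^\dagger NW\Sigma=N$, so $W$ is $(N\Sigma',N)$-unitary; equivalently $WW^{[N]}=(\Sigma')^{-1}$. Writing $(S')^{[N]}=(W^{[N]})^{-1}SW^{[N]}$ and substituting $W^{[N]}=\Sigma^{-1}W^{-1}$ and $S\Sigma=\Sigma S$ yields $(S')^{[N]}=WSW^{-1}=S'$, so $S'$ is $N$-selfadjoint; and since $\Sigma'S'=S'\Sigma'$ (immediate from $S'=WSW^{-1}$, $\Sigma'=W\Sigma W^{-1}$, $S\Sigma=\Sigma S$), combining $N$-selfadjointness of $S'$ with this commutation gives $N\Sigma'$-selfadjointness of $S'$.

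For uniqueness, suppose $F=S''W''$ with $S''$ $r$-positive-definite, $N$-selfadjoint and $N\Sigma'$-selfadjoint, and $W''$ $(N\Sigma',N)$-unitary. Combining the two selfadjointness properties of $S''$ forces $S''\Sigma'=\Sigma'S''$, and the unitarity of $W''$ gives $W''(W'')^{[N]}=(\Sigma')^{-1}$; then $F^{[N]}=(W'')^{[N]}S''$ leads to $FF^{[N]}=S''(\Sigma')^{-1}S''$, hence $\Sigma'FF^{[N]}=(S'')^2$, and $r$-positive-definiteness of $S''$ gives $S''=(\Sigma'FF^{[N]})^{1/2}=S'$ and then $W''=(S'')^{-1}F=W$. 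The routine computations throughout are all of this type and present no real difficulty; the one point requiring care is that the statement only makes sense, and the identities $(\Sigma')^{[N]}=\Sigma'$ and the selfadjointness of $N\Sigma'$ only hold, when $\sigma$ respects $N$-adjoints --- which is guaranteed by the hypothesis on the generalized sign function (and holds for $\Sign$ since $\overline{\Sign(\lambda)}=\Sign(\bar\lambda)$); this is the one place I would flag, everything else following mechanically from $S'=WSW^{-1}$ and $\Sigma'=W\Sigma W^{-1}$.
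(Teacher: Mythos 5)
Your proposal is correct. Note, however, that this paper does not actually prove Theorem~\ref{thm:F=SW}: it is quoted verbatim from the authors' earlier work \cite{Sui2015} (``we quote the results we proved in~\cite{Sui2015}''), so there is no in-paper proof to compare against. Your argument is the natural way to derive the left decomposition from the right one, and it is sound: setting $S'=FSF^{-1}=WSW^{-1}$ immediately gives $F=S'W$ with the \emph{same} $W$, and the similarity $FF^{[N]}=F(F^{[N]}F)F^{-1}$ together with the similarity invariance of primary matrix functions yields $\Sigma'=F\Sigma F^{-1}=W\Sigma W^{-1}$ and $(S')^2=\Sigma'FF^{[N]}$, so that $S'=(\Sigma'FF^{[N]})^{1/2}$ by uniqueness of the $r$-positive-definite square root; the verifications of $(N\Sigma',N)$-unitarity of $W$ and of the $N$- and $N\Sigma'$-selfadjointness of $S'$, as well as the uniqueness argument via $\Sigma'FF^{[N]}=(S'')^2$, all check out. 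Two small points are worth making explicit. First, since $\Sigma$ is involutory ($\sigma(\lambda)=\pm1$), $N\Sigma^{-1}=N\Sigma$, which reconciles the $(N,N\Sigma^{-1})$-unitarity hypothesis of Theorem~\ref{thm:F=WS} with the $(H,H\Sigma)$-unitarity language used elsewhere in the paper. Second, your argument quietly uses several properties of the ``generalized sign function'' $\sigma$ of \cite{Sui2015}: that it is a primary matrix function (hence similarity-invariant and commuting with $S$, which is itself a primary function of $F^{[N]}F$), and that it is compatible with $N$-adjoints so that $\Sigma'$ is $N$-selfadjoint and $N\Sigma'$ is an admissible inner-product matrix; you flag the latter correctly, and for the specific $\Sign$ of Section~\ref{sec:sign_function} (where $\overline{\Sign(\lambda)}=\Sign(\bar\lambda)$) all of these hold, so the proof applies in particular to the setting of Theorem~\ref{thm:F=WS_H}.
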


In  Theorems~\ref{thm:F=WS} and \ref{thm:F=SW},  the generalized sign function $\sigma$ is defined in~\cite{Sui2015}.    Here we  specify that the generalized sign function $\sigma$ is the sign function defined in Section~\ref{sec:sign_function}.  Moreover, we  specify that  the matrix $H\in \mathbb{K}^{n\times n}$ is a nonsingular selfadjoint matrix defining an indefinite inner product. Then one has  the following theorem.

\begin{thm}
\label{thm:F=WS_H}
Let $H \in \mathbb{K}^{n\times n}$ be a nonsingular selfadjoint matrix. Any nonsingular  matrix $F \in \mathbb{K}^{n \times n}$ can be factorized uniquely as
\begin{align}
F = W S=S^\prime W,
\end{align}
where,  with $\Sigma= \Sign(F^{[H]}F)$ and $\Sigma^\prime= \Sign(FF^{[H]})$, 
\begin{align}
S=(\Sigma F^{[H]}F)^{1/2}\in \mathbb{K}^{n \times n}
\end{align}
is $r$-positive-definite $H$-selfadjoint and $H\Sigma$-selfadjoint, 
\begin{align}
S^\prime=(\Sigma^\prime FF^{[H]})^{1/2} \in \mathbb{K}^{n\times n}
\end{align}
 is  $r$-positive-definite $H$-selfadjoint and $H\Sigma^\prime$-selfadjoint,   and 
\begin{align}
 W=FS^{-1} = S^{\prime-1} F\in \mathbb{K}^{n \times n}
\end{align}
 is $(H, H\Sigma)$-unitary and $(H\Sigma^\prime, H)$-unitary.
\end{thm}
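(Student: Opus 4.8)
\section*{Proof proposal}

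\textbf{Approach.} The plan is to obtain the statement as a specialization of the general decomposition Theorems~\ref{thm:F=WS} and~\ref{thm:F=SW} of~\cite{Sui2015}: take the scalar product to be the indefinite inner product defined by the selfadjoint matrix $H$ (so $N=H$), take the generalized sign function $\sigma$ to be $\Sign$ of Section~\ref{sec:sign_function}, and then simplify the conclusions using the fact that $\Sigma=\Sign(F^{[H]}F)$ and $\Sigma^\prime=\Sign(FF^{[H]})$ are involutory. Essentially the only work is to verify that the hypotheses of Theorems~\ref{thm:F=WS} and~\ref{thm:F=SW} are met and to match the terminology.

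\textbf{Main steps.} First I would check the condition $(F^{[H]})^{[H]}=F$ required in Theorem~\ref{thm:F=WS}. Since $H$ is nonsingular selfadjoint, $H^\dagger=H$, hence $(H^{-1})^\dagger=(H^\dagger)^{-1}=H^{-1}$, and for any $A\in\mathbb{K}^{n\times n}$,
\begin{align}
(A^{[H]})^{[H]}=H^{-1}\bigl(H^{-1}A^\dagger H\bigr)^\dagger H
=H^{-1}H^\dagger\,A\,(H^{-1})^\dagger H = A ,
\end{align}
using $A^{\dagger\dagger}=A$. In particular $(F^{[H]})^{[H]}=F$, so the nonsingular matrix $F$ satisfies the assumptions of Theorem~\ref{thm:F=WS} with $N=H$. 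Applying that theorem with $\sigma=\Sign$ gives $F=WS$ with $S=(\Sigma F^{[H]}F)^{1/2}$ being $r$-positive-definite, $H$-selfadjoint and $H\Sigma$-selfadjoint, $W=FS^{-1}$ being $(H,H\Sigma^{-1})$-unitary, and $(W^{[H]})^{[H]}=W$. Here the principal square root is well defined because $\Sign$ turns the negative real eigenvalues of $F^{[H]}F$ into positive ones while leaving all other eigenvalues fixed, so $\Sigma F^{[H]}F$ has no negative real eigenvalues; for $\mathbb{K}=\mathbb{R}$ everything stays real because $\overline{\Sign(\lambda)}=\Sign(\bar\lambda)$. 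Next I would remove the inverse on $\Sigma$: by the properties of $\Sign$ recorded in Section~\ref{sec:sign_function}, $\Sigma$ is involutory, so $\Sigma^{-1}=\Sigma$ and ``$(H,H\Sigma^{-1})$-unitary'' is the same as ``$(H,H\Sigma)$-unitary''; moreover $(W^{[H]})^{[H]}=W$ is automatic by the computation above.

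\textbf{Left decomposition and conclusion.} Applying Theorem~\ref{thm:F=SW} with the same $N=H$ and $\sigma=\Sign$ yields the left factorization $F=S^\prime W$, with $S^\prime=(\Sigma^\prime FF^{[H]})^{1/2}$ being $r$-positive-definite, $H$-selfadjoint and $H\Sigma^\prime$-selfadjoint, $W=S^{\prime-1}F$ being $(H\Sigma^\prime,H)$-unitary, and the $W$ here coinciding with the one in $F=WS$. Combining, $F=WS=S^\prime W$ with $W=FS^{-1}=S^{\prime-1}F$ simultaneously $(H,H\Sigma)$-unitary and $(H\Sigma^\prime,H)$-unitary, which is the assertion; uniqueness is inherited from the uniqueness clauses of Theorems~\ref{thm:F=WS} and~\ref{thm:F=SW}. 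I do not expect a genuine obstacle here: the nontrivial content lives in~\cite{Sui2015}, and the only subtleties are the selfadjointness identity $(F^{[H]})^{[H]}=F$ and the passage from $(H,H\Sigma^{-1})$-unitarity to $(H,H\Sigma)$-unitarity, where involutivity of $\Sigma$ (and of $\Sigma^\prime$) is used.
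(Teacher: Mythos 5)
Your proposal is correct and follows essentially the same route as the paper, which obtains Theorem~\ref{thm:F=WS_H} precisely by specializing Theorems~\ref{thm:F=WS} and~\ref{thm:F=SW} to $N=H$ and $\sigma=\Sign$. Your added verifications --- that $(F^{[H]})^{[H]}=F$ holds automatically for selfadjoint $H$ and that involutivity of $\Sigma$ turns $(H,H\Sigma^{-1})$-unitarity into $(H,H\Sigma)$-unitarity --- are exactly the details the paper leaves implicit.
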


By Theorem~\ref{thm:Sigma}, the negative subspaces of  $\Sigma$ and $\Sigma^\prime$ are hyperbolic.  We now prove the following factorization of a nonsingular square matrix into $H$-normal matrices.

\begin{thm} 
\label{thm:F=LXS}
Let $H \in \mathbb{K}^{n\times n}$ be a nonsingular selfadjoint matrix.   Then any nonsingular  matrix $F \in \mathbb{K}^{n \times n}$ can be factorized (non-uniquely) as
\begin{align}
\label{eq:F=LXS}
F = L X S= S' L_1 X_1= S' X' L' =X'_1 L'_1 S,
\end{align}
where, with $\Sigma= \Sign(F^{[H]}F)$ and $\Sigma^\prime= \Sign(FF^{[H]})$,  
\begin{align}
S=(\Sigma F^{[H]}F)^{1/2}\in \mathbb{K}^{n \times n}
\end{align}
is $r$-positive-definite $H$-selfadjoint and $H\Sigma$-selfadjoint, 
\begin{align}
S^\prime=(\Sigma^\prime FF^{[H]})^{1/2} \in \mathbb{K}^{n\times n}
\end{align}
is  $r$-positive-definite $H$-selfadjoint and $H\Sigma^\prime$-selfadjoint,  the matrices 
$L$, $L_1$, $L^\prime$, $L^\prime_1$  $\in \mathbb{K}^{n \times n}$ are $H$-unitary,  the matrices $X$, $X_1$, $X^\prime$, $X^\prime_1$ $\in \mathbb{K}^{n \times n}$ are $H$-normal $H$-neutral involutory,   and   
\begin{align}
LX=L_1X_1= X^\prime L^\prime=X_1^\prime  L_1^\prime.
\end{align} 
\end{thm}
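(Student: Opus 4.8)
The plan is to obtain all four factorizations by combining the right/left decomposition of Theorem~\ref{thm:F=WS_H} with the $W=LX$ and $W=XL$ factorizations of Section~\ref{sec:W=LX}. By Theorem~\ref{thm:F=WS_H}, the nonsingular matrix $F\in\mathbb{K}^{n\times n}$ can be written uniquely as $F=WS=S'W$, where $S=(\Sigma F^{[H]}F)^{1/2}$ and $S'=(\Sigma' FF^{[H]})^{1/2}$ have the stated $r$-positive-definiteness and selfadjointness properties and are uniquely determined by $F$ and $H$, while the single matrix $W=FS^{-1}=S'^{-1}F$ is simultaneously $(H,H\Sigma)$-unitary and $(H\Sigma',H)$-unitary, with $\Sigma=\Sign(F^{[H]}F)$ and $\Sigma'=\Sign(FF^{[H]})$. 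So the $S$ and $S'$ in the statement are already in hand, and it remains to factor $W$.

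Next I would verify that $\Sigma$ and $\Sigma'$ satisfy the hypotheses of the factorization theorems of Section~\ref{sec:W=LX}. Both are involutory by the properties of the sign function recalled in Section~\ref{sec:sign_function}. Both are $H$-selfadjoint: $F^{[H]}F$ and $FF^{[H]}$ are $H$-selfadjoint, and since the stem function satisfies $\overline{\Sign(\lambda)}=\Sign(\bar\lambda)$ with all higher derivatives vanishing, one has $f(A^{[H]})=f(A)^{[H]}$ for $f=\Sign$, so $\Sigma^{[H]}=\Sigma$ and $\Sigma'^{[H]}=\Sigma'$. Finally, by Theorem~\ref{thm:Sigma} the negative eigenspace of $\Sigma$ is hyperbolic, and by the left-handed analogue of that theorem (obtained by replacing $F$ with $F^{[H]}$, as noted right after its proof) the negative eigenspace of $\Sigma'$ is hyperbolic as well. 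Hence $\Sigma$ and $\Sigma'$ are $H$-selfadjoint involutory matrices with hyperbolic negative eigenspace, and $H$ is congruent to $H\Sigma$ and to $H\Sigma'$.

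Then I would apply the two factorizations to $W$. Since $W$ is $(H,H\Sigma)$-unitary, Theorem~\ref{thm:W=LX} with $\Phi=\Sigma$ gives $W=LX$ with $L$ $H$-unitary and $X$ $H$-normal $H$-neutral involutory; since $W$ is $(H\Sigma',H)$-unitary, Theorem~\ref{thm:W=XL} with $\Phi=\Sigma'$ gives $W=X'L'$ with $L'$ $H$-unitary and $X'$ $H$-normal $H$-neutral involutory. Substituting into $F=WS$ and $F=S'W$ yields
\begin{align}
F &= WS = LXS = X'L'S, \\
F &= S'W = S'LX = S'X'L'.
\end{align}
Taking $L_1=L$, $X_1=X$, $X_1'=X'$, $L_1'=L'$ produces exactly the four claimed decompositions, and since $LX=L_1X_1=X'L'=X_1'L_1'=W$ these products all coincide; the uniqueness of $S$ and $S'$ is inherited from Theorem~\ref{thm:F=WS_H}, and the non-uniqueness of the remaining factors is the non-uniqueness already present in Theorems~\ref{thm:W=LX} and~\ref{thm:W=XL}.

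I do not expect a real obstacle: the statement is essentially a concatenation of Theorems~\ref{thm:F=WS_H}, \ref{thm:W=LX}, and~\ref{thm:W=XL}. The only point requiring care is the middle step, namely confirming that $\Sigma$ and $\Sigma'$ are $H$-selfadjoint involutory with hyperbolic negative eigenspace so that Theorems~\ref{thm:W=LX} and~\ref{thm:W=XL} genuinely apply to $W$ with $\Phi=\Sigma$ and $\Phi=\Sigma'$ respectively; this is precisely what the discussion in Section~\ref{sec:sign_function} and Theorem~\ref{thm:Sigma} (with its left-handed counterpart) supply.
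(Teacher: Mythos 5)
Your proposal is correct and follows essentially the same route as the paper: invoke the unique $F=WS=S'W$ decomposition, check via the sign-function properties and Theorem~\ref{thm:Sigma} (and its $F\mapsto F^{[H]}$ counterpart) that $\Sigma$ and $\Sigma'$ are $H$-selfadjoint involutory with hyperbolic negative eigenspace, then apply Theorems~\ref{thm:W=LX} and~\ref{thm:W=XL} to $W$ and substitute. Your explicit verification of $H$-selfadjointness and involutority of $\Sigma,\Sigma'$ is slightly more detailed than the paper's one-line appeal to the definition of $\Sign$, and your choice $L_1=L$, $X_1=X$, $X_1'=X'$, $L_1'=L'$ matches the remark the paper makes after its proof.
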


\begin{proof}
By  Theorem~\ref{thm:F=WS}, one has $F=WS=S^\prime W$, where $W$  is $(H, H\Sigma)$-unitary  and  $(H\Sigma^\prime, H)$-unitary with  $\Sigma=\Sign(F^{[H]}F)$ and $\Sigma^\prime=\Sign(FF^{[H]})$.  Through the definition of Sign function, $\Sigma$ and $\Sigma^\prime$ are $H$-selfadjoint.  Moreover,  by Theorem~\ref{thm:Sigma},  $\Sigma$ and $\Sigma^\prime$ satisfy the condition  in Theorem~\ref{thm:W=LX} and Theorem~\ref{thm:W=XL},  so $W=LX=X^\prime L^\prime$, where $L,L^\prime$ are $H$-unitary and $X,X^\prime$ are $H$-normal $H$-neutral involutory. 
Therefore, taking into account that $L$, $X$, $X^\prime$ and $L^\prime$ are not unique, one has $F = L X S= S' L_1 X_1= S' X' L' =X'_1 L'_1 S$, where $LX=L_1X_1= X^\prime L^\prime=X_1^\prime  L_1^\prime$.
\end{proof}

Notice that in~(\ref{eq:F=LXS}),  $S$  and $S^\prime$  are  unique,  but
$L$, $X$, $L_1$, $X_1$, $X^\prime$, $L^\prime$, $X_1^\prime$ and $L_1^\prime$ are not unique, because the factorizations $W=LX=X^\prime L^\prime$ are not unique.   In~(\ref{eq:F=LXS}),  one can choose $L=L_1$, $X=X_1$ and   $L^\prime=L_1^\prime$,  $X^\prime=X_1^\prime$.



\section*{Acknowledgements}

This research was supported by the University of Utah.






\section*{References}

\end{document}